%version 1: initial write-up
%version 2 ??
%version 3: 
%version 4: change of the defintion of LHomomorphism
%version 5: Added more detail on cohomology
%version 6: Section 1 is changed considerably. Hopefully, sections 1 and 2 are clean.
%version 7: New intro, new definition of over-restricted
%version 8: Major revision in chapter 1: the new argument involving discreteness and centrality
%version 8: submitted to arxiv
%version 9: comments from LMS, Nov 2017, resubmitted to arxiv
%version 10: corrections to 2h-2 table
%papers split, versions after the split
%version 1: initial write-up
%version 2: Matt's edits
%version 3: submitted to ArXiv and Pacific JM
%version 4: Corrections from Steve Donkin
%version 5: Corrections from referee; 1st go
%version 6: Resubmitted to ArXiv and Pacific JM
%version 7: clarified rational functions, resubmitted to Pacific JM and to ArXiv
%\documentclass[12pt,oneside]{amsart}
\documentclass[12pt]{amsart}

\usepackage[pagewise]{lineno}
%\linenumbers

\usepackage[all]{xy}
\usepackage{xspace}
\usepackage{graphicx}
\usepackage{longtable}
\usepackage{mathtools}
\usepackage{latexsym}
\usepackage{fullpage,amsfonts,amsmath,amscd,amssymb}
\input{xy}
\xyoption{all}
\xyoption{curve}

\newcommand{\bA}{{\mathbb A}}
\newcommand{\bB}{{\mathbb B}}

\newcommand{\bD}{{\mathbb D}}
\newcommand{\bE}{{\mathbb E}}

\newcommand{\bG}{{\mathbb G}}
\newcommand{\bK}{{\mathbb K}}

\newcommand{\bS}{{\mathbb S}}
\newcommand{\bZ}{{\mathbb Z}}

\newcommand{\fg}{{\mathfrak g}}
\newcommand{\fl}{{\mathfrak l}}

\newcommand{\GL}{{{\mbox{\rm GL}}}}

\newcommand{\Aut}{{{\mbox{\rm Aut}}}}
\newcommand{\Ad}{{{\mbox{\rm Ad}}}}
\newcommand{\Lie}{{{\mbox{\rm Lie}}}}
 
\newcommand{\dom}{{{\mbox{\rm dom}}}}
\newcommand{\Hom}{{{\mbox{\rm Hom}}}}
\newcommand{\PGL}{{{\mbox{\rm PGL}}}} 

\newcommand{\Coker}{{{\mbox{\rm Coker}}}}

\newcommand{\Sta}{{{\mbox{\rm Stab}}}}

\newcommand{\Obs}{{{\mbox{\rm Obs}}}}
\newcommand{\Inf}{{{\mbox{\rm Inf}}}}
\renewcommand{\inf}{{{\mbox{\rm inf}}}}
\newcommand{\Res}{{{\mbox{\rm Res}}}}

\newcommand{\LH}{{[LH]^\theta{\mbox{\rm mo}}}} 
\newcommand{\End}{{{\mbox{\rm End}}}}

\renewcommand{\1}{_{(1)}}

\newtheorem{theorem}{Theorem}
\newtheorem{prop}[theorem]{Proposition}
\newtheorem{lemma}[theorem]{Lemma}

\newtheorem{cor}[theorem]{Corollary}

\begin{document}
\title[Integration of Modules]{Integration of Modules I: Stability}
%to an Algebraic Group}
\author{Dmitriy Rumynin}
\email{D.Rumynin@warwick.ac.uk}
\address{Department of Mathematics, University of Warwick, Coventry, CV4 7AL, UK
  \newline
\hspace*{0.31cm}  Associated member of Laboratory of Algebraic Geometry, National
Research University Higher School of Economics, Russia}
%\address{Department of Mathematics, University of Warwick, Coventry, CV4 7AL, UK}
\thanks{The research was partially supported by the Russian Academic Excellence Project `5--100' and by the Max Planck Society.}
\author{Matthew Westaway}
\email{M.P.Westaway@warwick.ac.uk}
\address{Department of Mathematics, University of Warwick, Coventry, CV4 7AL, UK}
\date{September 20, 2018}
\subjclass{Primary  20G05; Secondary 17B45}
\keywords{Frobenius kernel, representation, cohomology, obstructions}
  
\begin{abstract}
  We explore the integration of
  representations
  from a Lie algebra to its algebraic group in positive characteristic.
  An integrable module is stable under the twists by group elements.
  Our aim is to investigate cohomological obstructions for passing from
  stability to an algebraic group action.
As an application, we prove integrability of bricks
for a semisimple algebraic group. 
%Our approach, a variation of themes of Cline and Donkin and of Dade and Th\'{e}venaz,
%  utilises $G$-stability.
\end{abstract}

\maketitle

Over a field of positive characteristic, an algebraic group $G$ acts on its Lie algebra $\fg$ and the restricted enveloping algebra $U_1(\fg)$
by automorphisms. This yields twists: an element $x\in G$
twists a $\fg$-module $(V,\theta)$
into $(V,\theta)^x \coloneqq (V,\theta\circ \Ad(x))$.
A $\fg$-module is {\em $G$-stable} if it is isomorphic to all its twists.
A $\fg$-module coming from a $G$-module is necessarily $G$-stable
but the converse is not true.
An important question in the modular representation theory of Lie algebras and algebraic groups is to determine for which modules the converse is true. We investigate this question in this paper.

Our method is subtly different from the known approach. 
Not only Cline and Donkin \cite{Cl, Don2} but also Parshall and Scott in their modern
exposition \cite{PS1} pursue a certain unipotent extension $G^\ast$
of the group $G$ that acts on a $G$-stable $\fg$-module $(V,\theta)$.
We, instead,  contemplate projective actions of $G$ on $(V,\theta)$.
In particular, we completely avoid the theory of Schreier Systems. 

Our approach instead has similarities to the work of Dade and Th\'{e}venaz \cite{Dade, Thev} on a related question for abstract groups. They study whether a $G$-stable representation $(V,\theta)$ of a normal subgroup $L$ of an abstract group $G$ can be extended to a representation of the entire group $G$. They show that when the automorphism group $\Aut_L(V)$ of $V$ is abelian the extension is controlled by 
$H^2(G/L, \Aut_L(V))$. 
%the second cohomology of $G/L$ with coefficients in $\Aut_L(V)$. 
Furthermore, the uniqueness of such extensions is controlled by 
$H^1(G/L, \Aut_L(V))$. 
%the first cohomology of this complex.

By introducing the terminology of $(L,H)$-morphs - a type of function which is partway to being a homomorphism - we are able to reinterpret the results of Dade-Th\'{e}venaz in a more general context (Theorem \ref{exact_seq}). When we apply these results to the question of module extensions we repeat Corollary 1.8 and Proposition 2.1 in \cite{Thev}, however our formalism allows us to generalise this to the case where $\Aut_L(V)$ is only soluble rather than abelian via an inductive process (Theorem \ref{branching}).

The other key difference between our results and those of Dade-Th\'{e}venaz is that we work with a slightly different relative cochain complex, denoted $(C^\bullet(G,L;A),d)$, while they work with the more standard complex $(C^\bullet(G/L;A),d)$. Whilst the cohomology of these complexes differ from the second cohomology group on, Theorem \ref{exact_seq} in fact works in either case. However, in order to apply similar methods to the case of algebraic groups, the study of this new cochain complex becomes necessary. These considerations are explained in more detail in Sections \ref{s2.5} and \ref{s3.5}.

Now we reveal the detailed content of the present paper,
emphasising the main results. In Chapter~\ref{s2},
we devise all the machinery to discuss $G$-stable modules
in the setting of abstract groups:
a group $G$, its normal subgroup $L$ and a $G$-stable $L$-module $(V,\theta)$.
%All technical tools appear 
We introduce weak $(L,H)$-morphs and the relative cochain complex
$C^\bullet (G,L;A)$
in Section~\ref{s2.2}, where $A$ is an abelian group with a $G$-action.
They feature in a key exact sequence (see Theorem~\ref{exact_seq}) that
controls both uniqueness and existence
of $G$-actions for a large class of $G$-stable $L$-modules.

The main result of this chapter is Theorem~\ref{branching},
a somewhat algorithmic result pinpointing completely
uniqueness and existence of a $G$-module structure on
a $G$-stable $L$-module.
Notice that it has been established by
Xanthopoulos
that $H^1 (G/L; A)$ controls uniqueness \cite{Xan}.
Since $H^1 (G/L; A)= H^1 (G,L; A)$, our results about uniqueness
are known.
However, $H^2 (G/L; A)\neq H^2 (G,L; A)$ (and the latter controls existence),
hence our results on existence are new, even in the setting
of abstract groups.
Our approach is useful because it fuses uniqueness and existence
into a single process controlled by the relative cohomology.

In Chapter~\ref{s3}
we extend our Chapter~\ref{s2} results from abstract groups to
%the case of rational modules over an
algebraic groups.
We face some technical challenges.
An important case for applications is when $L$ is a Frobenius kernel of $G$.
Hence, we must assume that $L$ is a closed subgroup scheme, not just
a closed algebraic subgroup.
The second challenge is poles:
we need to distinguish
rational and algebraic cohomology,
since we encounter rational cocycles $\mu : G^n\rightarrow A$
that are not necessarily algebraic.
We deal with technicalities in Sections~\ref{s3.1} and \ref{s3.2}.

In Section~\ref{s3.3}
we exhibit
a key exact sequence for rational cohomology
(Theorem~\ref{alg_exact_seq} -- an analogue of Theorem~\ref{exact_seq}).
Again,  this sequence 
controls both uniqueness and existence
of $G$-actions.
Immediately we put it to good use:
a $G$-stable $\fg$-brick (a module with trivial endomorphisms)
is a $G$-module (Theorem~\ref{brick}). 

A greater generality than 
$\fg$-bricks 
is $\fg$-modules with a soluble group
of automorphisms. These are our assumptions in Section~\ref{s3.4}.
Our main result in this section is Theorem~\ref{alg_branching},
an analogue of Theorem~\ref{branching} for algebraic groups.
Again, this theorem  pinpoints completely
uniqueness and existence of a $G$-module structure on
a $G$-stable $\fg$-module.

It is interesting to see whether our results could be applied to two
old conjectures in the area: 
Humphreys-Verma Conjecture \cite{Don2, PS1}, \cite[Ch. 11]{Jan}
and 
Verma Conjecture \cite{Don1,Xan}

We would like to thank Ami Braun, Simon Goodwin and Jim Humphreys 
for valuable discussions.
We are indebted to Stephen Donkin
for encouragement, interest in our work and sharing 
Xanthopoulos' thesis.

\section{$G$-stable modules for abstract groups}
\label{s2}
In this chapter we study $\bA G$-modules where $G$ is a group, $\bA$
is an associative ring. 
%Our results may be classical, yet our point of view is original. 
%First, we avoid the Theory of Schreier Systems that allows us to keep
%focus on the task in hand. Second, we do not explicitly use the machinery
%of Nonabelian Cohomology which allows us to keep the exposition elementary.

\subsection{Automorphisms of indecomposable modules}
\label{s2.1}
Let $\bB$ be a finite dimensional algebra over a field $\bK$
(of any characteristic), $M$ a finite-dimensional $\bB$-module, $\bE =\End(M)$ its endomorphism ring, $J=J(\bE)$ its Jacobson radical, and $H=\Aut (M)$
its automorphism group. We start with the following useful observation:
%is obvious, so left
%without a proof:
\begin{prop}
  \label{aut_mod}
  \begin{enumerate}
    \item The quotient algebra $\bE/J$ is a division algebra if and only if $M$ is indecomposable.
    \item
      If $M$ is indecomposable and $\bE /J$ is separable,
      %(where $J$ is the Jacobson radical of~$\bE$),
      then $H\cong \GL_1(\bD)\ltimes U$ where
      $\bD=\bE /J$ is a division algebra and
    $U=1+J$ is a connected unipotent group. %, equal to $1+J$.
    \item Further in the conditions of (2),
    if $\bD=\bK$, then $H= \GL_1 (\bK) \times U$.
%    if $\bD$ is a field and $Z(\bD)=\bA$ then $H= \GL_1 (\bD) \times U$.
%    if $\bA$ is a field and $Z(\bD)=\bA$ then $H= \GL_1 (\bD) \times U$.
  \end{enumerate}
\end{prop}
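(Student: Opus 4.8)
The plan is to dispose of the three parts in order, with part (1) being essentially classical and parts (2)--(3) following by analyzing the structure of $\End(M)$ as a filtered algebra. For part (1), I would argue as follows. If $M$ is decomposable, say $M = M_1 \oplus M_2$ with both summands nonzero, then the idempotent projections onto $M_1$ and $M_2$ are nonzero, non-identity elements of $\bE$, and they survive in $\bE/J$ (idempotents lift and any nonzero idempotent cannot lie in the Jacobson radical), so $\bE/J$ contains nontrivial idempotents and cannot be a division algebra. Conversely, if $M$ is indecomposable, then $\bE$ has no nontrivial idempotents, hence neither does $\bE/J$; since $\bE/J$ is semisimple Artinian with no nontrivial idempotents, it is a product with a single factor which is a full matrix algebra over a division ring, and the absence of nontrivial idempotents forces the matrix size to be $1$, so $\bE/J = \bD$ is a division algebra.

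For part (2), I would first observe that as a set $H = \Aut(M)$ consists exactly of the units of $\bE$, and the reduction map $\pi\colon \bE \to \bE/J = \bD$ sends $H$ onto $\GL_1(\bD) = \bD^\times$, with kernel precisely $1 + J$: an element $1 + j$ with $j \in J$ is automatically invertible since $J$ is nilpotent, and conversely any unit reducing to $1$ lies in $1 + J$. Thus we have a short exact sequence of groups $1 \to U \to H \to \GL_1(\bD) \to 1$ with $U = 1 + J$. The group $U$ is unipotent and connected because $J$ is a nilpotent ideal of a finite-dimensional $\bK$-algebra, so $1 + J$ is an iterated extension (via the filtration by powers $J \supseteq J^2 \supseteq \cdots$) of the additive groups $J^i/J^{i+1}$, each of which is a connected unipotent group; hence $U$ is connected unipotent. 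To split the sequence I would use the separability hypothesis on $\bD = \bE/J$: by the Wedderburn--Malcev principal theorem, the surjection $\bE \twoheadrightarrow \bE/J$ admits a section as $\bK$-algebras, i.e.\ there is a subalgebra $\bD \hookrightarrow \bE$ mapping isomorphically onto $\bE/J$. Restricting this algebra section to unit groups gives a group-theoretic splitting $\GL_1(\bD) \to H$ of $\pi$, and since $\GL_1(\bD)$ normalizes $U$ inside $H$ (as $U$ is normal) we obtain $H \cong \GL_1(\bD) \ltimes U$.

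For part (3), when $\bD = \bK$ the complement $\GL_1(\bK) = \bK^\times$ supplied by the Wedderburn--Malcev section is central scalars in $\bE$ — it is the image of $\bK$ under the structural map $\bK \to \bE$ — and central elements commute with everything, in particular with $U = 1 + J$. Hence the semidirect product in part (2) has trivial conjugation action and is in fact a direct product: $H = \GL_1(\bK) \times U$.

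The main obstacle I anticipate is making the splitting in part (2) genuinely rigorous and transparent: one must invoke Wedderburn--Malcev in the form that requires separability of $\bE/J$ (not merely of the ground field, which may be imperfect in the positive-characteristic setting that motivates the paper), and one should be careful that the splitting is taken as an algebra homomorphism so that it automatically respects multiplication and units. The remaining points — that $1+J$ is connected unipotent, that $\pi$ restricts to a surjection on units with kernel $1+J$, and the centrality argument in part (3) — are routine once the filtration-by-powers-of-$J$ picture is set up.
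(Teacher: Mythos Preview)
Your proof is correct and follows essentially the same route as the paper's: part (1) via the equivalence of indecomposability and locality of $\bE$, part (2) via the Wedderburn--Malcev splitting $\bE = \bD \oplus J$ and identification of $H$ with units of $\bE$, and part (3) via centrality of the scalar copy of $\bK$ in $\bE$. The only cosmetic differences are that the paper writes down the isomorphism $\GL_1(\bD)\ltimes U \to H$ explicitly (via $(d,1+j)\mapsto d+dj$) and argues connectedness of $U$ by the variety isomorphism $1+J\cong J$ rather than by your filtration argument, and phrases part (3) in terms of the $\bD$-$\bD$-bimodule structure on $J$ collapsing when $\bD\otimes_\bK\bD^{op}=\bK$.
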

%Observe that the latter condition holds if, for example, $\bA$ is a field and $Z(\bD)=\bA$.
\begin{proof} (1) It is a standard fact that a finite length module
  is indecomposable if and only if its endomorphism ring is local.
  Since $\bE$ is finite-dimensional, this is equivalent to $\bE/J$ being
  a division ring.
  
  (2) By (1), $\bD=\bE/J$. Since $\bD$ is separable, we can use the Malcev-Wedderburn Theorem to split off the radical, i.e., to realize $\bD$ as a subalgebra
  of $\bE$ such that $\bE= \bD \oplus J$. 

  Clearly, $H=\GL_1 (\bE)$. Consider an element $x=d+j$, $d\in\bD$, $j\in J$.
Since $x^n = d^n + j^\prime$ for some $j^\prime\in J$, $x$ is nilpotent if and only if $d=0$. By the Fitting Lemma, $x\in H$ if and only if $d\neq 0$. The key isomorphism
  is given by the multiplication map:
   $$   \GL_1(\bD)\ltimes U \xrightarrow{\cong} H = \GL_1  (\bE), \ \
  (d, 1+ j) \mapsto d+dj \; , $$
  $$ H = \GL_1  (\bE) \xrightarrow{\cong} \GL_1(\bD)\ltimes U, \ \ 
  d + j \mapsto (d, 1 +d^{-1}j) \; . $$

  It remains to observe that $U=1+J$ is a connected unipotent algebraic group.
  It is connected because it is isomorphic to $J$ as a variety.
  It is unipotent because each of its elements is unipotent in $\GL (M)$.
%  Consider the radical filtration on $M$:
%$$
%M_0 = M, \ \ \ 
%M_{n+1} = Rad (M_n). 
%$$
%$R=End_A (M)$ is a local ring with $R/J\cong K$ where $J$ is the Jacobson radical.
%For each element $x\in R$, we pick a scalar $\alpha_x\in K$ such that
%$\alpha_x-x\in J$.
%Observe that $\alpha_{xy}=\alpha_x\alpha_y$.
%
%This gives a subnormal series on $H=Aut_A (M)$:
%$$
%H_n = \{x\in H | \alpha_x-x \in J^n \}
%$$
%clearly $H_1=H$.
%
%$H_n$ is a closed subgroup: call $\tilde{x} = x - \alpha_x$
%so that $x=\alpha_x +\tilde{x}$  
%Then $xy=\alpha_{xy} + (\alpha_y \tilde{x} + \alpha_x \tilde{y}
%+ \tilde{x}\tilde{y})$. So
%$$
%\tilde{xy} = \alpha_y \tilde{x} + \alpha_x \tilde{y} + \tilde{x}\tilde{y}.
%$$
%It is stilll in $J^n$ if factors are in $J^n$.
%$\alpha_{xy}=\alpha_x\alpha_y$. 
%
%The quotient are abelian because $\tilde{x}\tilde{y} \in J^{2n}$

  (3) The Malcev-Wedderburn decomposition turns $J$ into a
  $\bD$-$\bD$-bimodule. Our condition forces
  $\bD\otimes_\bK \bD^{op} = \bK\otimes_\bK \bK^{op} = \bK$
  so that the bimodule structure is just the $\bK$-vector space structure.
  Hence, $\GL_1(\bD)=\GL_1 (\bK)$ and $U$ commute. 
\end{proof}

\subsection{$(L,H)$-Morphs}
\label{s2.2}
Let $G\geq L$, $K\geq H$ be two group-subgroup pairs.
Let $N=N_K(H)$ and $C_K(H)$ be the normaliser and the centraliser
of $H$ in $K$. 
By {\em an $(L,H)$-morph from $G$ to $K$}
we understand
a function $f:G\rightarrow K$ satisfying the following four conditions:
\begin{enumerate}
\item $f\mid_L$ is a group homomorphism.
\item $f(G)\subset N_K(H)$.
\item $f(x)f(y)\in f(xy)H$ for all $x,y \in G$.
\item $f(L)\subset C_K(H)$.
%\item $f(x)$ is in the normaliser $N=N_K(H)$ for all $x\in G$.
\end{enumerate}
By a {\em weak $(L,H)$-morph from $G$ to $K$} 
we understand
a function $f:G\rightarrow K$ satisfying only the first three conditions. 

One can observe that a weak $(L,H)$-morph is just
a homomorphism $G\rightarrow N/H$ with a choice of lifting to $N$
satisfying an additional condition.
For instance, weak $(G,1)$-morphs are the same as homomorphisms $G\rightarrow K$
and weak $(1,K)$-morphs are just functions $G\rightarrow K$ which preserves the identity. Furthermore, the same statements also hold if we replace weak morphs with morphs in the previous sentence.

%A standard source of $(L,H)$-morphs are
Commonly $(L,H)$-morphs originate from 
$K$-$G$-sets $X=\,_KX_G$, i.e., 
$G$ acts on the right, $K$ on the left and the actions commute.
Let $\theta\in X$ such that its $G$-orbit is inside its $K$-orbit.
Let $H$ be the stabiliser of $\theta$ in $K$. Choose a section
$K/H\rightarrow K$ which sends the coset $H$ to $1_K$. The composition of
the section with the $G$-orbit map of $\theta$ is a function
$$
f:G\rightarrow K \ \ \mbox{ characterised by } \ \ 
\,^{f(x)}\theta = \theta^{x}
\ \ \mbox{ for all } x\in G.$$
\begin{lemma}
\label{1Hhom}
The map  $f$ defined above is a $(1,H)$-morph.
\end{lemma}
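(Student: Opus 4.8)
The plan is to verify the four defining conditions of an $(L,H)$-morph directly from the characterising property $\,^{f(x)}\theta = \theta^{x}$, exploiting that $K$ acts on the left and $G$ on the right with commuting actions, and that $H = \Sta_K(\theta)$. Since this is the statement for a $(1,H)$-morph, conditions (1) and (4) are vacuous (as $L = 1$), so only (2) and (3) require work; I would remark this at the outset to streamline the argument.

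First I would establish condition (3), that $f(x)f(y) \in f(xy)H$ for all $x,y \in G$. The idea is to compute how $f(x)f(y)$ moves $\theta$ and compare with $f(xy)$. Using the commuting actions, $\,^{f(x)f(y)}\theta = \,^{f(x)}(\,^{f(y)}\theta) = \,^{f(x)}(\theta^{y}) = (\,^{f(x)}\theta)^{y} = (\theta^{x})^{y} = \theta^{xy} = \,^{f(xy)}\theta$. Hence $f(xy)^{-1}f(x)f(y)$ fixes $\theta$, i.e.\ lies in $H$, which is exactly condition (3). One should also note that $f$ is well defined because the $G$-orbit of $\theta$ lies inside its $K$-orbit, so the $G$-orbit map into $K/H$ followed by the chosen section makes sense.

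Next I would check condition (2), that $f(G) \subset N_K(H)$. Take $x \in G$; I want to show $f(x) H f(x)^{-1} = H$, i.e.\ that $f(x)$ normalises the stabiliser of $\theta$. For $h \in H$, consider the element $f(x) h f(x)^{-1}$ and test it against $\theta$: using $\,^{f(x)}\theta = \theta^x$ we get $\,^{f(x)^{-1}}(\theta^x) = \theta$, wait — more cleanly, I would show $f(x) H f(x)^{-1} = \Sta_K(\,^{f(x)}\theta) = \Sta_K(\theta^x)$, and then observe that since the $K$-action and $G$-action commute, $\Sta_K(\theta^x) = \Sta_K(\theta) = H$ (if $h$ fixes $\theta$ then $h \cdot \theta^x = (h\cdot\theta)^x = \theta^x$, and conversely). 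Combining, $f(x) \in N_K(H)$.

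The steps here are all short orbit-stabiliser manipulations, so there is no single hard obstacle; the only point requiring a little care is making sure the well-definedness of $f$ (hence that $f$ genuinely lands in $K$ and the formula $\,^{f(x)}\theta = \theta^x$ pins down $f(x)$ modulo $H$) is invoked correctly, and that the commutativity of the two actions is used consistently when sliding $G$ past $K$. I would close by remarking that $f(1_G) = 1_K$ by construction of the section, which is consistent with (3) at $x = y = 1$.
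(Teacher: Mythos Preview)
Your proposal is correct and follows essentially the same route as the paper: both verify condition~(3) by computing $\,^{f(x)f(y)}\theta$ via the commuting actions and comparing with $\,^{f(xy)}\theta$, and both verify condition~(2) by showing conjugation by $f(x)$ preserves the stabiliser of $\theta$. Your stabiliser formulation of~(2), namely $f(x)Hf(x)^{-1} = \Sta_K(\theta^x) = \Sta_K(\theta)$, is marginally cleaner than the paper's element-wise calculation (which literally only shows $f(x)^{-1}Hf(x)\subseteq H$), but this is a cosmetic difference rather than a distinct approach.
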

\begin{proof}
	 
By definition,  $\,^{f(xy)}\theta = \theta^{xy}$.
  On the other hand, 
  $\theta^{xy}= (\theta^x)^y = (\,^{f(x)}\theta)^y =
  \,^{f(x)f(y)}\theta$.
  Hence, $\theta = \,^{f(xy)^{-1}f(xy)}\theta
  = \,^{f(xy)^{-1}f(x)f(y)}\theta$
  and $f(xy)^{-1}f(x)f(y)\in H$.

  Now pick $h\in H$. Then
$\,^{f(x)^{-1}h f(x)}\theta =
  \,^{f(x)^{-1}h}\theta^x =
  \,^{f(x)^{-1}}\theta^x =
  \,^{f(x)^{-1}f(x)}\theta= \theta$
  so that 
  ${f(x)^{-1}h f(x)}\in H$.
\end{proof}
We would like to identify weak $(L,H)$-morphs that
define the same homomorphisms $G\rightarrow N/H$.
More precisely, we say that two weak $(L,H)$-morphs $f$ and $f^\prime$ are equivalent
if $f^\prime (x)\in f(x)H$ for all $x \in G$. 
We denote the set of equivalence classes of weak $(L,H)$-morphs by
$[LH]{\mbox{\rm mo}}(G,K)$.
Furthermore, given a fixed homomorphism $\theta:L\rightarrow K$ we denote by
$\LH(G,K)$ the set of equivalence classes of those
weak $(L,H)$-morphs that restrict to $\theta$ on $L$.

Let $A$ be an additive abelian group with a $G$-action (a $\bZ G$-module).
We consider a subcomplex $(\widetilde{C}^\bullet (G,L;A),d)$
of the standard complex $(C^\bullet (G;A),d)$
that consists of such cochains $\mu_n$
that are trivial on $L^n$, i.e., $\mu_n\mid_{L\times \ldots \times L} \equiv 0_A$.

We observe that this cochain complex fits into an exact sequence of cochain complexes
$$0\rightarrow \widetilde{C}^\bullet(G,L;A)\rightarrow C^\bullet (G;A)\rightarrow C^\bullet (L;A)\rightarrow 0 \; .$$
This then allows us to form a long exact sequence of cohomology
$$\ldots\rightarrow H^{n-1}(G;A)\rightarrow H^{n-1}(L;A)\rightarrow \widetilde{H}^n(G,L;A)\rightarrow
H^n(G;A)\rightarrow H^n(L;A)\rightarrow\ldots$$

For our purposes, we have to modify this subcomplex slightly. We consider a subcomplex $(C^\bullet (G,L;A),d)$
of the standard complex $(C^\bullet (G;A),d)$ which is obtained from $(\widetilde{C}^\bullet (G,L;A),d)$ in the following way: 
for $n>0$, $C^n(G,L;A)=\widetilde{C}^n (G,L;A)$, 
whilst $C^0(G,L;A)=A^L$. We can furthermore replace the complex $C^\bullet(L;A)$ with the 
complex $\widetilde{C}^\bullet(L;A)$, defined by $\widetilde{C}^n(L;A)=\Coker(C^n(G,L;A)\to C^n(G;A))$ for all $n\geq 0$. In particular, we observe that 
$\widetilde{C}^n(L;A)=C^n(L;A)$ for all $n\geq 1$. This then recovers %gives us
an exact sequence of cochain complexes: 
$$0\rightarrow C^\bullet(G,L;A)\rightarrow C^\bullet (G;A)\rightarrow \widetilde{C}^\bullet (L;A)\rightarrow 0 \; .$$

In particular, observing that for the cochain complex $\widetilde{C}^\bullet(L;A)$ we have $\widetilde{H}^0(L;A)=0$ and $\widetilde{H}^n(L;A)=H^n(L;A)$ for $n\geq 1$, 
we can form the long exact sequence of cohomology
$$0\rightarrow H^1(G,L;A)\rightarrow\ldots\rightarrow H^{n-1}(L;A)\rightarrow H^n(G,L;A)\rightarrow
H^n(G;A)\rightarrow H^n(L;A)\rightarrow\ldots$$

What can we say about the natural map $f_n :H^n (G,L;A) \rightarrow H^n(G;A)$?
From this long exact sequence, the following proposition is clear.

\begin{prop}
	\label{cohom_compare}
	\begin{enumerate}
		%		\item The induced sequence $0\rightarrow H^1(G,L;A)\rightarrow H^1(G;A)\rightarrow H^1(L;A)\rightarrow H^2(G,L;A)$ is exact.
		\item For $n>0$, $H^{n} (L;A)=0$ if and only if $f_n$ is surjective and $f_{n+1}$ is injective.
		\item For $n>1$, $f_n$ is injective if and only if the restriction map $Z^{n-1} (G;A)\rightarrow Z^{n-1} (L;A)$ is surjective.
		%\item $f_n$ is surjective if and only if the image of the restriction map $Z^{n-1} (G;A)\rightarrow Z^{n-1} (L;A)$ lies inside $B^{n-1}(L;A)$.
	\end{enumerate}
\end{prop}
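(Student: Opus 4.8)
The plan is to read everything off the long exact sequence in cohomology
$$0\rightarrow H^1(G,L;A)\rightarrow\ldots\rightarrow H^{n-1}(L;A)\xrightarrow{\ \partial\ } H^n(G,L;A)\xrightarrow{\ f_n\ } H^n(G;A)\xrightarrow{\ r_n\ } H^n(L;A)\rightarrow\ldots$$
that was just established, where $r_n$ is induced by the restriction $C^\bullet(G;A)\to \widetilde C^\bullet(L;A)$ (which agrees with ordinary restriction in degrees $\geq 1$) and $\partial$ is the connecting map. Exactness at each term is the only input; parts (1) and (2) are then essentially bookkeeping.

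For (1), fix $n>0$. If $H^n(L;A)=0$, then exactness at $H^n(G;A)$ forces $f_n$ to be surjective (its image is $\ker r_n = H^n(G;A)$), and exactness at $H^{n+1}(G,L;A)$ together with $\partial\colon H^n(L;A)=0\to H^{n+1}(G,L;A)$ having zero image forces $f_{n+1}$ to be injective (its kernel is $\operatorname{im}\partial = 0$). Conversely, if $f_n$ is surjective and $f_{n+1}$ is injective, then $\operatorname{im} r_n = H^n(L;A)$ on one hand (surjectivity of $f_n$ makes $r_n$ the zero map only if... careful here) — more precisely, exactness at $H^n(G;A)$ gives $\operatorname{im} f_n = \ker r_n$, so $f_n$ surjective means $r_n = 0$; and exactness at $H^n(L;A)$ gives $\operatorname{im} r_n = \ker\partial$, while $f_{n+1}$ injective plus exactness at $H^{n+1}(G,L;A)$ gives $\ker f_{n+1}=\operatorname{im}\partial = 0$, i.e. $\partial$ is injective, so $\ker\partial = 0 = \operatorname{im} r_n$. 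Combining $r_n=0$ with $\operatorname{im} r_n = 0$ is automatic; the point is that $r_n=0$ and $\ker\partial=0$ together with exactness $\operatorname{im} r_n=\ker\partial$ are consistent only, and we must instead extract $H^n(L;A)=0$ directly: from $\ker\partial=0$ and $\operatorname{im} r_n = \ker\partial=0$ we get $r_n=0$ as a map with zero image, but $r_n=0$ already; what actually kills $H^n(L;A)$ is that $H^n(L;A)=\operatorname{im} r_n \oplus$ (no) — I would instead argue: $H^n(L;A)=\ker\partial$ is false in general. The correct route: surjectivity of $f_n$ $\iff$ $r_n=0$ $\iff$ $\operatorname{im} r_n = 0$; injectivity of $f_{n+1}$ $\iff$ $\operatorname{im}\partial=0$ $\iff$ $\partial=0$ $\iff$ $r_n$ surjective (exactness at $H^n(L;A)$: $\operatorname{im} r_n = \ker \partial$, and $\partial=0$ means $\ker\partial$ is everything). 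So $r_n$ is both zero and surjective, hence $H^n(L;A)=0$. That is the clean argument.

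For (2), fix $n>1$. Exactness at $H^n(G,L;A)$ gives $\ker f_n = \operatorname{im}\partial$, where $\partial\colon H^{n-1}(L;A)\to H^n(G,L;A)$. So $f_n$ is injective iff $\partial=0$ iff, by exactness at $H^{n-1}(L;A)$, the map $r_{n-1}\colon H^{n-1}(G;A)\to H^{n-1}(L;A)$ is surjective. It remains to translate surjectivity of $r_{n-1}$ on cohomology into surjectivity of $Z^{n-1}(G;A)\to Z^{n-1}(L;A)$ on cocycles. One direction is trivial since a surjection on cocycles induces a surjection on cohomology. For the other, given a cocycle $\zeta\in Z^{n-1}(L;A)$, surjectivity of $r_{n-1}$ on cohomology gives $\eta\in Z^{n-1}(G;A)$ with $\eta|_{L^{n-1}} = \zeta + d\beta$ for some $\beta\in C^{n-2}(L;A)$; since $n-1\geq 1$ one may (using that the restriction $C^{n-2}(G;A)\to C^{n-2}(L;A)$ is surjective by definition of the relative complex, for $n-2\geq 1$, and trivially for $n-2=0$) lift $\beta$ to $\tilde\beta\in C^{n-2}(G;A)$ and replace $\eta$ by $\eta - d\tilde\beta$, which still lies in $Z^{n-1}(G;A)$ and now restricts exactly to $\zeta$. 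The one case needing a sentence of care is $n-2=0$: there $C^0(G;A)=A$ restricts onto $C^0(L;A)=A$ (or onto $\widetilde C^0(L;A)$), so the lifting is fine.

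The only genuine subtlety — the main obstacle — is the second equivalence in (2): the passage between the cohomological statement $\operatorname{coker} r_{n-1}=0$ and the cochain-level statement that $Z^{n-1}(G;A)\to Z^{n-1}(L;A)$ is onto. The hypothesis $n>1$ is used precisely so that the coboundaries from degree $n-2$ can be lifted along $G\supset L$ compatibly; I would isolate this as the crux and verify the lifting of $(n{-}2)$-cochains from $L$ to $G$ degree by degree, noting it is immediate from the surjectivity of $C^{n-2}(G;A)\to C^{n-2}(L;A)$ built into the construction of $C^\bullet(G,L;A)$. Everything else is formal diagram-chasing in the long exact sequence.
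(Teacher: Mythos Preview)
Your argument is correct. Part~(1) is exactly what the paper does (``this follows from the exact sequence''), though your exposition wanders through a false start before arriving at the clean statement: $f_n$ surjective $\Leftrightarrow r_n=0$, and $f_{n+1}$ injective $\Leftrightarrow \partial=0 \Leftrightarrow r_n$ surjective; together these force $H^n(L;A)=0$.

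For part~(2) you take a genuinely different route from the paper. The paper argues directly at the cochain level in both directions: for injectivity of $f_n$ given cocycle-surjectivity, it takes $\mu\in Z^n(G,L;A)$ with $\mu=d\eta$ in $C^\bullet(G;A)$, observes $\eta|_L\in Z^{n-1}(L;A)$, lifts it to $\zeta\in Z^{n-1}(G;A)$, and writes $\mu=d(\eta-\zeta)$ as a relative coboundary; the converse extends $\mu\in Z^{n-1}(L;A)$ arbitrarily to $\chi$, uses $[d\chi]\in\ker f_n=0$ to find $\zeta\in C^{n-1}(G,L;A)$ with $d\chi=d\zeta$, and outputs $\chi-\zeta$. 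You instead factor through the intermediate equivalence ``$f_n$ injective $\Leftrightarrow r_{n-1}$ surjective on cohomology'' (immediate from the long exact sequence) and then separately upgrade cohomology-surjectivity to cocycle-surjectivity by lifting $(n{-}2)$-cochains along $C^{n-2}(G;A)\twoheadrightarrow C^{n-2}(L;A)$. Both work; the paper's version is a single self-contained chase, while yours is more modular and makes clearer why $n>1$ is needed (precisely so that degree $n-2$ exists for the coboundary adjustment). Your handling of the boundary case $n-2=0$ is fine.
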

\begin{proof}
	%	
	%	{(1)} The map $H^1(G,L;A)\rightarrow H^1(G;A)$ is injective since $\mu\in Z^1(G,L;A)\cap B^1(G;A)$ implies that $\mu=da$ for some $a\in A^L$ ($a\in A^L$ since $\mu(x)=0$ for all $x\in L$). The exactness of the rest of the sequence follows as in the $\widetilde{C}^\bullet(G,L;A)$ case since it doesn't involve $B^1(G,L;A)$.
	
	{(1)} This follows from the exact sequence.
	%Pick $\mu \in Z^n(G;A)$.
	%Then $\mu|_L \in Z^n(L;A)$ so that $\mu|_L = d \eta$ for some $\eta\in C^{n-1}(L;A)$.
	%The cocycle can always be extended to $\zeta\in C^{n-1}(G;A)$ so that $\zeta|_L = \eta$.
	%Hence, $\mu-d\zeta \in Z^{n}(G,L;A)$, and $f_n$ is surjective.
	
	{(2)} Suppose $Z^{n-1} (G;A)\rightarrow Z^{n-1} (L;A)$ is surjective.
	Pick $\mu \in Z^n(G,L;A)$ such that $[\mu]\in\ker (f_n)$.
	Then $\mu\in B^n(G;A)$ and $\mu = d \eta$ for some $\eta\in C^{n-1}(G;A)$.
	Moreover, $d (\eta|_{L})=\mu|_{L}\equiv 0$ so that $\eta|_{L}\in Z^{n-1}(L;A)$.
	Our assumption gives $\zeta\in Z^{n-1}(G;A)$ such that $\zeta|_L=\eta|_L$.
	Hence, $\eta-\zeta \in C^{n-1}(G,L;A)$ and $\mu = d(\eta-\zeta) \in B^{n}(G,L;A)$.
	
	Now suppose $f_n$ is injective. Pick $\mu\in Z^{n-1}(L;A)$, and extend it to $\chi\in C^{n-1}(G;A)$. Hence $d\chi\in Z^n(G,L;A)$ and $[d\chi]\in \ker(f_n)$. So $d\chi=d\zeta$ for some $\zeta\in C^{n-1}(G,L;A)$. Now $\chi-\zeta\in Z^{n-1}(G;A)$ and $(\chi-\zeta)|_L=\mu$.
\end{proof}

%\begin{cor}
%	If $H^n(L;A)=H^{n-1}(L;A)=0$ then $f_n$ is a bijection.
%\end{cor}

%\begin{cor}\footnote{Do we care about this result? Should we prove it?}
%	$H^n(L;A)=0$ if and only if $f_n$ is surjective and $f_{n+1}$ is injective.
%\end{cor}

\begin{cor}
	For $n>1$, $H^n(G,L;A)=0$ if and only if $H^{n-1}(G;A)\rightarrow H^{n-1}(L;A)$ is surjective and $H^{n}(G;A)\rightarrow H^{n}(L;A)$ is injective. Furthermore, $H^1(G,L;A)=0$ if and only if $H^{1}(G;A)\rightarrow H^{1}(L;A)$ is injective.
\end{cor}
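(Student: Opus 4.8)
The statement is a corollary of Proposition~\ref{cohom_compare}, so the plan is simply to combine its two parts, together with the long exact sequence displayed just before that proposition. First I would treat the case $n>1$. Recall from the long exact sequence
$$\ldots\rightarrow H^{n-1}(G;A)\xrightarrow{r_{n-1}} H^{n-1}(L;A)\rightarrow H^n(G,L;A)\xrightarrow{f_n}
H^n(G;A)\xrightarrow{r_n} H^n(L;A)\rightarrow\ldots$$
that $H^n(G,L;A)=0$ holds if and only if the map $H^{n-1}(L;A)\to H^n(G,L;A)$ is zero and $f_n$ is injective. The first condition is, by exactness at $H^{n-1}(L;A)$, equivalent to $r_{n-1}:H^{n-1}(G;A)\to H^{n-1}(L;A)$ being surjective. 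So it remains to show that, given this surjectivity, injectivity of $f_n$ is equivalent to injectivity of $r_n:H^n(G;A)\to H^n(L;A)$.

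For that equivalence I would invoke Proposition~\ref{cohom_compare}(2): for $n>1$, $f_n$ is injective if and only if the restriction $Z^{n-1}(G;A)\to Z^{n-1}(L;A)$ on cocycles is surjective. I claim that, under the standing assumption that $r_{n-1}$ is surjective on cohomology, surjectivity of $Z^{n-1}(G;A)\to Z^{n-1}(L;A)$ is equivalent to injectivity of $r_n$. Indeed, given $\mu\in Z^{n-1}(L;A)$, surjectivity of $r_{n-1}$ produces $\zeta\in Z^{n-1}(G;A)$ with $[\zeta|_L]=[\mu]$, so $\mu-\zeta|_L=d\eta$ for some $\eta\in C^{n-2}(L;A)$; lifting $\eta$ arbitrarily to $\tilde\eta\in C^{n-2}(G;A)$, the cocycle $\zeta+d\tilde\eta$ restricts exactly to $\mu$ precisely when the obstruction vanishes, and one checks this obstruction is governed by $r_n$ being injective via the usual diagram chase with the short exact sequence of complexes $0\to C^\bullet(G,L;A)\to C^\bullet(G;A)\to \widetilde C^\bullet(L;A)\to 0$. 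Alternatively, and more cleanly, I would avoid re-deriving this by noting that Proposition~\ref{cohom_compare}(2) already encodes the cocycle statement, and I would instead argue purely on cohomology: from the long exact sequence, $H^n(G,L;A)=0$ iff ($r_{n-1}$ surjective and $f_n$ injective), and once $r_{n-1}$ is surjective the connecting-map term vanishes, forcing $\ker f_n=0$ to be equivalent to exactness pushing $\mathrm{im}(f_n)=\ker(r_n)$ to be all of $H^n(G,L;A)$ mapping injectively — combined with $\mathrm{im}(f_n)$ meeting $\ker r_n$, one gets $f_n$ injective iff $r_n$ injective. The cleanest route is: $H^n(G,L;A)=0$ $\Leftrightarrow$ the two adjacent maps in the LES have, respectively, full image into it and zero image out of the previous term, which unwinds directly to the two stated conditions on $r_{n-1}$ and $r_n$.

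Finally, for $n=1$ the long exact sequence begins $0\to H^1(G,L;A)\xrightarrow{f_1} H^1(G;A)\xrightarrow{r_1} H^1(L;A)$, so exactness at $H^1(G,L;A)$ immediately gives $H^1(G,L;A)=0$ iff $f_1$ is injective, and then exactness at $H^1(G;A)$... but in fact injectivity of $f_1$ is automatic here, so $H^1(G,L;A)=0$ iff $\mathrm{im}(f_1)=0$ iff $\ker r_1=0$, i.e. iff $r_1$ is injective. I expect the only mild subtlety — the ``main obstacle'' in an otherwise routine deduction — to be bookkeeping the low-degree end of the modified complex correctly, namely remembering that $\widetilde H^0(L;A)=0$ (so the sequence genuinely starts at $H^1$ with $f_1$ injective) rather than the naive $H^0$ term one would get from the unmodified relative complex; this is exactly why the modification $C^0(G,L;A)=A^L$ was introduced, and it is what makes the $n=1$ case come out as stated with only an injectivity (not also a surjectivity) condition.
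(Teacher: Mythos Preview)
Your argument for $n>1$ contains a genuine error in the initial decomposition. You claim that $H^n(G,L;A)=0$ if and only if the connecting map $\delta:H^{n-1}(L;A)\to H^n(G,L;A)$ is zero and $f_n$ is injective. But $\delta=0$ already forces $\ker f_n=\mathrm{im}\,\delta=0$ by exactness, so your two conditions collapse to the single condition $\delta=0$, which is equivalent only to $r_{n-1}$ being surjective and says nothing about the vanishing of $H^n(G,L;A)$. This is why the subsequent attempt to show ``$f_n$ injective $\Leftrightarrow$ $r_n$ injective, given $r_{n-1}$ surjective'' becomes tangled: that equivalence is false in general, since under your hypothesis $f_n$ is \emph{automatically} injective while $r_n$ need not be. The detour through Proposition~\ref{cohom_compare}(2) and cocycle-level lifting does not rescue this, and your ``cleanest route'' paragraph is too vague to constitute an argument.

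The paper states the corollary without proof because it is an immediate read-off from the long exact sequence. The correct decomposition is: $H^n(G,L;A)=0$ if and only if both $\delta=0$ and $f_n=0$ (an injective zero map has trivial domain; conversely a trivial domain forces both). By exactness at $H^{n-1}(L;A)$, $\delta=0$ is equivalent to $r_{n-1}$ surjective; by exactness at $H^n(G;A)$, $\mathrm{im}\,f_n=0$ is equivalent to $\ker r_n=0$, i.e.\ $r_n$ injective. That is the entire argument. Your treatment of the case $n=1$, by contrast, is correct.
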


The next theorem clarifies the origin of this new complex.
Let us fix a homomorphism 
$\theta=f|_L : L \rightarrow N$ 
and choose a subgroup 
$\widetilde{H} \leq H$, normal in $N=N_K(H)$ such that $A\coloneqq H/\widetilde{H}$ is abelian.
Notice that the conjugation 
%$g H \cdot h \widetilde{H} \coloneqq ghg^{-1} \widetilde{H}$
$\,^{g H}h \widetilde{H} \coloneqq ghg^{-1} \widetilde{H}$
defines a structure of an $N/H$-module (and a $G$-module via any weak $(L,H)$-morph) on $A$. 
Informally, we should think of the next theorem as ``an exact sequence''
\begin{equation}
  \label{exact_seq0}
H^1(G,L;A)
\dashrightarrow 
[L\widetilde{H}]^\theta{\mbox{\rm mo}} (G,N)
\longrightarrow 
\LH(G,N)
\longrightarrow 
H^2(G,L;A)
\end{equation} 
keeping in mind that the second and the third terms are sets (not even pointed sets)
and the first arrow is an ``action'' rather than a map.
Let us make it more precise: a weak $(L,H)$-morph defines a $G$-module structure $\rho$ on $A$.
For each particular $\rho$ (not just its isomorphism class) we define
$$
[L\widetilde{H}]^\theta{\mbox{\rm mo}} (G,N)_\rho \subseteq
[L\widetilde{H}]^\theta{\mbox{\rm mo}} (G,N), \ \ \ 
\LH(G,N)_\rho
\subseteq
\LH(G,N)
$$
as subsets of those weak $(L,H)$-morphs that define this particular $G$-action $\rho$.
These subsets could be empty, in which case we consider the following theorem true for trivial reasons. The reader should consider this theorem and its proof as a generalisation of the results in sections 1 and 2 in \cite{Thev} to the situation of weak $(L,H)$-morphs.
\begin{theorem}
  \label{exact_seq}
We are in the notations preceding this theorem. For each $G$-action $\rho$ on $A$ 
the following statements hold:
\begin{enumerate}
\item There is a restriction map 
$$
\Res:
[L\widetilde{H}]^\theta{\mbox{\rm mo}} (G,N)_\rho
\longrightarrow 
\LH(G,N)_\rho, \ \ \
\Res (\langle f\rangle ) = [f]
$$
where 
$\langle f\rangle$ and $[f]$
are the equivalence classes %of $f$
in 
$[L\widetilde{H}]^\theta{\mbox{\rm mo}} (G,N)_\rho$
and
$\LH(G,N)_\rho$.
%correspondingly.
\item The abelian group $Z^1(G,L;(A,\rho))$
acts freely on the set $[L\widetilde{H}]^\theta{\mbox{\rm mo}} (G,N)_\rho$
by
$$
\gamma \cdot \langle f \rangle \coloneqq \langle \dot{\gamma} f \rangle
\ \mbox{ where } \ 
\dot{\gamma}f (x) = \dot{\gamma} (x) f (x)
\ \mbox{ for all } \ x\in G 
$$  
and $\dot{\gamma}: G \xrightarrow{\gamma} A \rightarrow H$
is a lift of $\gamma$ to a map $G\rightarrow H$ with
$\dot{\gamma} (1)=1$.
\item The corestricted restriction map 
$
\Res:
[L\widetilde{H}]^\theta{\mbox{\rm mo}} (G,N)_\rho
\longrightarrow 
\mbox{\rm Im}(\Res) %\LH(G,N)_\rho
$
is a quotient map by the $Z^1(G,L;(A,\rho))$-action.  
\item Two classes $\langle f\rangle, \langle g\rangle\in[L\widetilde{H}]^\theta{\mbox{\rm mo}} (G,N)_\rho$ 
lie in the same $B^1(G,L;(A,\rho))$-orbit if and only if
there exist $h\in H$, $f^\prime\in \langle f\rangle$, $g^\prime\in \langle g\rangle$
such that $[f(L),h]\subset\widetilde{H}$ and $f^\prime (x) = h g^\prime (x) h^{-1}$
for all $x \in G$.
\item
There is an obstruction map
$$\Obs:
\LH(G,N)_\rho
\longrightarrow 
H^2(G,L;(A,\rho)), \ \ \
\Obs ([f]) = [f^\sharp]
$$
where the cocycle $f^\sharp$ is defined by
$
f^\sharp(x,y)= f(x)f(y)f(xy)^{-1}\widetilde{H}
$.
\item The sequence (\ref{exact_seq0}) is exact, i.e., the image of $\Res$ is equal to $\Obs^{-1}([0])$.
\end{enumerate}
\end{theorem}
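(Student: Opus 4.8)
The proof is a line-by-line translation between weak $(L,\widetilde{H})$-morphs and the relative complex $C^\bullet(G,L;A)$. The guiding picture: a weak $(L,H)$-morph $f$ that restricts to $\theta$ and induces $\rho$ fails to be a weak $(L,\widetilde{H})$-morph exactly by the $2$-cochain $f^\sharp$, and the freedom in repairing this failure by an $H$-valued function is precisely $Z^1(G,L;(A,\rho))$, its coboundary part $B^1$ accounting for conjugation by a single element of $H$. The two standing hypotheses — $\widetilde{H}$ normal in $N$ and $A=H/\widetilde{H}$ abelian — are what license every coset manipulation and every cocycle identity below. Part (1) is then immediate: a weak $(L,\widetilde{H})$-morph into $N$ is a weak $(L,H)$-morph (condition (3) only relaxes $\widetilde{H}$ to $H$; condition (2) is automatic since $\widetilde{H}$ and $H$ are both normalised by $N$), the $\widetilde{H}$-equivalence refines the $H$-equivalence, and the induced $G$-action on $A$ only sees $f$ modulo $H$, so $\Res(\langle f\rangle)=[f]$ lands in $\LH(G,N)_\rho$ and is well defined. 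For (2) I would fix a lift $\dot\gamma\colon G\to H$ of $\gamma$ with $\dot\gamma(1)=1$ and $\dot\gamma|_L=1$ (possible because $\gamma|_L=0$); conditions (1),(2) for $\dot\gamma f$ are clear, and condition (3) is the computational heart — expanding $\dot\gamma f(x)\dot\gamma f(y)$ and pushing $f(x)$ past $\dot\gamma(y)$ produces the term ${}^{x}\gamma(y)$, whereupon the cocycle identity $\gamma(xy)=\gamma(x)+{}^{x}\gamma(y)$ collapses everything (using normality of $\widetilde H$ in $N$) into $\dot\gamma f(xy)\widetilde{H}$. Independence of $\langle\dot\gamma f\rangle$ from the chosen lift and from the representative of $\langle f\rangle$, the action axiom, and freeness (if $\dot\gamma(x)\in\widetilde{H}$ for all $x$ then $\gamma=0$) are then short.

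Parts (3) and (4) refine this. Given $[f]=[g]$ in $\LH(G,N)_\rho$, set $\gamma(x):=g(x)f(x)^{-1}\widetilde{H}$; since $g(x)f(x)^{-1}=f(x)\bigl(f(x)^{-1}g(x)\bigr)f(x)^{-1}\in H$, this lies in $A$, it vanishes on $L$ because $f$ and $g$ both restrict to $\theta$, and its cocycle identity drops out of condition (3) for $f$ and $g$ together with the fact that they induce the same $\rho$; the lift $\dot\gamma(x)=g(x)f(x)^{-1}$ gives $\dot\gamma f=g$, so $\gamma\cdot\langle f\rangle=\langle g\rangle$. The reverse inclusion for the quotient statement is the trivial remark $\dot\gamma f(x)\in f(x)H$, so $\Res$ collapses $Z^1$-orbits; combined with freeness from (2) this is exactly assertion (3). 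For (4) I would show $\gamma\in B^1$ iff $\gamma=d\eta$ with $\eta\in A^L$, lift $\eta$ to some $h\in H$, and observe that $\eta\in A^L$ unwinds to $[f(L),h]\subseteq\widetilde{H}$ while $\gamma=d\eta$ unwinds, via the coboundary formula, to $g(x)\widetilde{H}=h\,f(x)\,h^{-1}\widetilde{H}$ — which is precisely $f'(x)=h\,g'(x)\,h^{-1}$ once one passes to the appropriate representatives (whose restriction to $L$ may then be $\theta$ only up to $\widetilde{H}$). This last step — juggling left/right cosets of $\widetilde H$, the latitude in choosing representatives, and the gap between "$[f(L),h]\subseteq\widetilde H$" and literal centralisation of $\theta(L)$ — is the fiddliest point of the theorem, and the one I expect to absorb most of the care.

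Finally, for (5): $f^\sharp(x,y)=f(x)f(y)f(xy)^{-1}\widetilde{H}$ lands in $A$ by the same conjugation trick as above; it is a $2$-cocycle by the standard associativity computation in $N$; it lies in $C^2(G,L;A)$ because $\theta$ is a homomorphism; and replacing $f$ by an equivalent $f'(x)=h_xf(x)$ (with $h_x=1$ on $L$, forced since $f'$ restricts to $\theta$) alters $f^\sharp$ by $d\eta$ with $\eta(x)=h_x\widetilde{H}\in C^1(G,L;A)$, so $\Obs([f])=[f^\sharp]$ is well defined. For (6), $\mathrm{Im}(\Res)\subseteq\Obs^{-1}([0])$ because a genuine weak $(L,\widetilde{H})$-morph $\tilde f$ has $\tilde f(x)\tilde f(y)\tilde f(xy)^{-1}\in\widetilde{H}$, whence $\tilde f^\sharp\equiv 0$; conversely, given $f^\sharp=d\eta$ with $\eta\in C^1(G,L;A)$, lift each $\eta(x)$ to $h_x\in H$ with $h_x=1$ on $L$ (so $h_1=1$, consistent with $\eta(1)=0$, which itself follows from $f^\sharp(1,1)=0$), and set $\tilde f(x):=h_x^{-1}f(x)$: the associativity bookkeeping now yields $\tilde f(x)\tilde f(y)\tilde f(xy)^{-1}\in\widetilde{H}$, so $\tilde f$ is a weak $(L,\widetilde{H})$-morph restricting to $\theta$ and inducing $\rho$, with $[\tilde f]=[f]$, so $[f]\in\mathrm{Im}(\Res)$. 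Together with freeness from (2), this also exhibits $\mathrm{Im}(\Res)$ as the orbit space $[L\widetilde{H}]^\theta{\mbox{\rm mo}}(G,N)_\rho/Z^1(G,L;(A,\rho))$, which is the full content of (6).
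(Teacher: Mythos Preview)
Your proposal is correct and follows essentially the same approach as the paper: both arguments reduce everything to coset computations in $N/\widetilde{H}$, using that $A=H/\widetilde{H}$ is abelian to identify the discrepancy between morphs with relative $1$- and $2$-cochains. The only organisational difference is that the paper handles (2) and (3) in a single stroke --- writing $f(x)=\alpha(x)g(x)$ and observing that the weak-morph condition for $f$, the weak-morph condition for $g$, and the cocycle condition for $\overline{\alpha}$ are three equations any two of which imply the third --- whereas you establish the action and then separately identify the fibres of $\Res$; the content is identical.
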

\begin{proof}
Suppose $\langle f\rangle = \langle g\rangle$. This gives a function
$\alpha :G \rightarrow \widetilde{H}$ such that $\alpha |_L \equiv 1$ and
$f(x) = \alpha (x) g(x)$ for all $x \in G$. Since $H\supseteq \widetilde{H}$,
we conclude that $[f]=[g]$ and the map $\Res$ is well-defined. This proves (1).

Suppose $\Res (\langle f\rangle) = \Res (\langle g\rangle)$.
Then 
$[f] = [g]$ gives a function
$\alpha :G \rightarrow H$ such that $\alpha |_L \equiv 1$ and
$f(x) = \alpha (x) g(x)$ for all $x \in G$. 
We can also obtain such a function from a cochain
$\gamma \in C^1(G,L;(A,\rho))$ by lifting
$\alpha = \dot{\gamma}$.  
Let us compute in the group $N/\widetilde{H}$
denoting
$a\widetilde{H}$ by $\overline{a}$.
The weak $(L,H)$-morph condition for $f$ is
equivalent to
the following equality:
$$
\overline{\alpha (xy)} \; \overline{g(xy)} 
= 
\overline{f (xy)}
=
\overline{f (x)} \; \overline{f(y)}
=
\overline{\alpha (x) g(x)} \; 
\overline{\alpha (y) g(y)}
=
\overline{
\alpha (x) g(x)
\alpha (y)  g(x)^{-1}} \; \overline{g(x) g(y)}.
$$
Now notice that 
$$
\overline{g(xy)} 
= 
\overline{g(x) g(y)}
=
\overline{g(x)} \; \overline{g(y)}
$$
is the weak $(L,H)$-morph condition for $g$, while
$$
\overline{\alpha (xy)} 
=
\overline{\alpha (x) g(x)
\alpha (y)  g(x)^{-1}}
=
\overline{\alpha (x)}\;
\overline{g(x)\alpha (y)  g(x)^{-1}}
=
\overline{\alpha (x)}\;
[\rho (x) (\overline{\alpha})] (y)  
$$
is the cocycle condition for $\overline{\alpha}=\alpha \widetilde{H}$.
Any two of these three conditions imply the third one,
which proves both (2) and (3), except the action freeness.

Suppose $\langle f\rangle=\gamma\cdot\langle f\rangle=\langle \dot{\gamma}f\rangle$.
This gives a function
$\alpha :G \rightarrow \widetilde{H}$ such that $\alpha |_L \equiv 1$ and
$\dot{\gamma}(x)f(x) = \alpha (x) f(x)$ for all $x \in G$.
Hence, $\dot{\gamma}=\alpha$ and
$\gamma=\overline{\alpha}\equiv 1$.
Thus, the action is free.

Let us examine
$da\cdot \langle f\rangle=
\langle \dot{da}f\rangle$ for some $a\in A^L$.
Since $d a (x) = -a +\rho (x)(a)$
and $\rho (x)$ can be computed by conjugating with $f(x)$, 
we immediately conclude that
$$
[\dot{da}f] (x) =
\dot{a}^{-1} f(x)\dot{a}f(x)^{-1}f(x) =
\dot{a}^{-1} f(x)\dot{a}.
$$
It is easy to see that $[f(L),\dot{a}]\subset \widetilde{H}$.
The argument we have just given is reversible, i.e., if
$f (x) = h g (x) h^{-1}$ then $\langle g\rangle = d\overline{h} \cdot \langle f \rangle$ and $\overline{h}\in A^L$. 
This proves (4).

Suppose 
$[f] = [g]$. This gives a function
$\alpha :G \rightarrow H$ such that $\alpha |_L \equiv 1$ and
$f(x) = \alpha (x) g(x)$ for all $x \in G$. 
Let us compute the cocycles in $N/\widetilde{H}$, 
keeping in mind that $H/\widetilde{H}$ is abelian:
\begin{align*}
f^\sharp (x, y) =  \overline{f(x)}\overline{f(y)}\overline{f(xy)^{-1}}
= & \ 
\overline{\alpha(x)}\; \overline{g(x)}\; \overline{\alpha(y)} \; \overline{g(y)} \;
\overline{g(xy)}^{-1} \overline{\alpha(xy)}^{-1} 
= \\
(\overline{\alpha(xy)}^{-1} \overline{\alpha(x)}\; \overline{g(x)} 
\overline{\alpha(y)} \overline{g(x)^{-1}})
\overline{g(x)}\overline{g(y)} \overline{g(xy)^{-1}}
& = 
d\,\overline{\alpha} (x,y) + g^\sharp(x, y).
\end{align*}
Thus $[f^\sharp] = [g^\sharp]$, proving (5).
 
It is clear that $f^\sharp \equiv 1$ for 
$f\in [L\widetilde{H}]^\theta{\mbox{\rm mo}} (G,N)_\rho$.
Hence, $\Obs (\Res (\langle f \rangle))=[0]$.
Suppose now that $\Obs ( [f])=[0]$.
This gives a function
$\alpha :G \rightarrow H$ such that $\alpha |_L \equiv 1$ and
$d \overline{\alpha} = f^\sharp$
Consider $g:G\rightarrow N$ defined by $g(x) =  \alpha (x)^{-1}f(x)$ for all $x \in G$.
Then $[g]=[f]$ and we can verify that $g\in [L\widetilde{H}]^\theta{\mbox{\rm mo}} (G,N)_\rho$
by checking $g^\sharp \equiv 1$ in $N/\widetilde{H}$:
\begin{align*}
%\overline{g(x)}\;\overline{g(y)}\;\overline{g(xy)^{-1}= 
g^\sharp (x,y)=
\overline{\alpha(x)}^{-1}
\overline{f(x)}\;
\overline{\alpha(y)}^{-1} 
\overline{f(y)}\;
\overline{f(xy)}^{-1}
\overline{\alpha(xy)}\; 
&
\sim 
\overline{\alpha(xy)}\; 
\overline{\alpha(x)}^{-1}
(\overline{f(x)}\;
\overline{\alpha(y)}
\; \overline{f(x)}^{-1} )^{-1} 
f^\sharp (x,y)
\\
=
(d\,\overline{\alpha}(x,y))^{-1}
f^\sharp (x,y)
&
\equiv 1.
\end{align*}
This proves (6). 
\end{proof}

%\subsection{$(L,H)$-Morphs}
%\label{s2.3}
Let us quickly re-examine how the last section works
for $(L,H)$-morphs. 
All of its results including
Theorem~\ref{exact_seq} clearly work,
although the objects that appear have additional properties.
Most crucially,
since $f(L)\subseteq C_K(H)$,
the $L$-action
on the abelian group $A$ is trivial.
If $L$ is normal in $G$,
this just means that $A$ is a $\bZ G/L$-module.
%Can we compare $H^\bullet (G,L;A)$ with 
%$H^\bullet (G/L;A)$?
%\begin{prop}
%Under the aforementioned conditions
%we have isomorphisms of groups
%\newline 
%$H^0(G,L;A) \cong H^0 (G/L;A)$
%and
%$H^1 (G,L;A)\cong H^1 (G/L;A)$.
%\end{prop}
%\begin{proof}
%It is easy to see that 
%$H^0(G,L;A) = A^G =  H^0 (G/L;A)$.
%The natural map from the group of normalised cochains
%$$
%\Psi : \widehat{C}^1(G/L;A) \rightarrow   C^1 (G,L;A), \ \ \ 
%\Psi (\mu) (g) = \mu (g L).
%$$
%defines a map 
%$[\Psi] : H^1(G/L;A) \rightarrow   H^1 (G,L;A)$
%of cohomology groups.
%It is injective because $[\Psi]([\mu])=0$ means that $\Psi (\mu) = da$ for some $a\in A^L$.
%Then $\mu = da$ and $[\mu]=0$.
%
%It is surjective because for $\eta \in Z^1 (G,L;A)$ we have $d\eta =0$ that translates as
%$$
%\eta (gh) = g \cdot \eta (h) + \eta (g) \ \ 
%\mbox{ for all } \ \ 
%g,h\in G.
%$$
%If one chooses $h\in L$, then it tells us that $\eta (gh) =  \eta (g)$.
%Thus, the cocycle  
%$$
%\mu \in  \widetilde{Z}^1(G/L;A) , \ \ \ 
%\mu (g L) \coloneqq \eta (g)
%$$
%is well-defined. By definition $\Psi (\mu) = \eta$. 
%\end{proof}
%In higher degrees we still can use the map $\Psi$ but the corresponding cohomology map
%$H^n (G/L;A) \rightarrow H^n(G,L;A)$ is not necessarily an isomorphism.

An important feature is that $Z^1(L;A)$ consists of homomorphisms $L\rightarrow A$ in this case.
This means that Proposition~\ref{cohom_compare}
yields the following corollary:  
\begin{cor}
  If the group $L$ is perfect, then
  $f_1 :H^1 (G,L;A) \rightarrow H^1(G;A)$ is surjective
  and 
  $f_2 :H^2 (G,L;A) \rightarrow H^2(G;A)$ is injective.
 \end{cor}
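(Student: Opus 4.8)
The plan is to reduce both assertions to a single application of Proposition~\ref{cohom_compare}(1) with $n=1$, so that everything comes down to checking that $H^1(L;A)=0$ under the stated hypotheses. First I would recall the standing assumption of this subsection: we are working with $(L,H)$-morphs rather than weak ones, so condition (4) in the definition forces $f(L)\subseteq C_K(H)$, and hence the conjugation $G$-action on $A=H/\widetilde H$ restricts to the \emph{trivial} action of $L$ on $A$, exactly as observed in the paragraph preceding the corollary. (If $L$ is normal in $G$, $A$ is even a $\mathbb{Z}G/L$-module, but normality is not needed here.)

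Given triviality of the $L$-action, the coboundary $B^1(L;A)=0$, while $Z^1(L;A)$ is precisely $\Hom(L,A)$ — the ``important feature'' noted just above the corollary. Therefore $H^1(L;A)=Z^1(L;A)=\Hom(L,A)$. Since $A$ is abelian, any homomorphism $L\to A$ factors through the abelianisation $L/[L,L]$; perfectness of $L$ means $L=[L,L]$, so $L/[L,L]$ is trivial and $\Hom(L,A)=0$. Hence $H^1(L;A)=0$.

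Now I would simply invoke Proposition~\ref{cohom_compare}(1) in the case $n=1$: it states that $H^1(L;A)=0$ if and only if $f_1$ is surjective and $f_2$ is injective. Combined with the vanishing just established, this yields both conclusions of the corollary simultaneously. (As a cross-check one could instead obtain injectivity of $f_2$ from Proposition~\ref{cohom_compare}(2) with $n=2$, since the restriction $Z^1(G;A)\to Z^1(L;A)=\Hom(L,A)=0$ is trivially surjective; but part (1) packages both statements together more cleanly.) There is no real obstacle in this argument — the only thing worth stating carefully is why the $L$-action on $A$ is trivial in the $(L,H)$-morph setting, which is immediate from defining condition (4); the rest is the elementary identification $H^1(L;A)=\Hom(L^{\mathrm{ab}},A)$.
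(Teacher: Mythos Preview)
Your proof is correct and follows essentially the same approach as the paper: use the triviality of the $L$-action (coming from condition (4) for $(L,H)$-morphs) to identify $Z^1(L;A)=\Hom(L,A)$, conclude $H^1(L;A)=0$ from perfectness, and then invoke Proposition~\ref{cohom_compare}(1) with $n=1$.
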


\subsection{Module extensions}
\label{s2.4}

We now assume that $L$ is a normal subgroup of $G$.
Let $\bA$ be an associative ring, $(V,\theta)$ an
$\bA L$-module, $K=\Aut_{\bA}V$ and $H=\Aut_{\bA L}V$
its automorphism groups. We can think of $\theta$
as an element of the set of $\bA L$-structures
$X= \hom (L,K)$. Then $H$ is the centraliser in $K$ of $\theta (L)$.
By $N$, as before, we denote the normaliser of $H$ in $K$.

Naturally, $X$ is a $K$-$G$-set: $G$ acts by conjugation on $L$
twisting the $\bA L$-module structure. $K$ acts by conjugations
on the target, while $H=\Sta_K (\theta)$.
The module $V$ is called {\em $G$-stable}
if $(V,\theta)\cong (V,\theta^g)$ for all $g\in G$.
This is equivalent to the orbit inclusion $\theta^G\subseteq \,^K\theta$.
By Lemma~\ref{1Hhom} this gives a $(1,H)$-morph
$f : G \rightarrow K$.

If $g \in L$, the isomorphism $f(g) : (V,\theta) \rightarrow (V,\theta^g)$
can be chosen to be $\theta (g)$. Indeed,
$$
\theta (g) (\theta(h) v)=\theta (gh) (v) =
\theta (ghg^{-1}) (\theta(g)(v)) =
\theta^{g}(h)(\theta(g)(v))
$$ 
for all $g,h\in L$.
Then, without loss of generality $f\vert_L=\theta$,
and $f$ is an $(L,H)$-morph in $\LH(G,N)$.

Suppose that the group $H=\Aut_{\bA L}V$ is soluble.
We can always find its subnormal series
$H=H_0 \rhd H_1 \rhd \ldots \rhd H_k = \{1\}$
with abelian quotients $A_j=H_{j-1}/H_j$ such that
each $H_j$ is normal in $N$. For instance, we can
use the commutator series $H_j=H^{(j)}$.
In this case, every abelian group $A_j$ becomes an $N$-module.

If $\bA$ is finite-dimensional over the field $\bK$
and $V$ is a finite-dimensional indecomposable
$\bA L$-module, we can use Proposition~\ref{aut_mod}
to derive useful information about its automorphisms.
In particular, if $\bD=\End_{\bA L}(V)/J$ is a separable field extension of $\bK$,
then $H= \GL_1 (\bD) \ltimes (1+J)$ is soluble.
It admits another standard
$N$-stable subnormal series:
$$
H_m = 1+J^m, \ m\geq 1, \ \ \
A_m = (1+J^m)/(1+J^{m+1}). 
$$
	As groups, we have $A_m=((1+J^m)/(1+J^{m+1}),\cdot)\cong (J^m/J^{m+1},+)$.
The following theorem is the
direct application of Theorem~\ref{exact_seq}.
It determines the uniqueness and existence of a $G$-module structure on a $G$-stable $L$-module.
The proof is obvious.
\begin{theorem}\label{branching}
  Let $V=(V,\theta)$ be a $G$-stable $\bA L$-module with a soluble automorphism
  group $H$, where $\bA$ is an associative ring. Let $H=H_0 \rhd H_1 \rhd \ldots \rhd H_k = \{1\}$
  be a subnormal $N$-stable series with abelian factors $A_j=H_{j-1}/H_j$.

  Any $\bA G$-module structure $\Theta$ on $(V,\theta)$ compatible
  with its $\bA L$-structure (i.e., $\Theta |_{\bA L} = \theta$)
  can be discovered by the following recursive process in $k$ steps.
  One initialises the process with an $(L,H_0)$-morph $f_0=f$
  coming from the $G$-stability. The step $m$ is the following.
  \begin{enumerate}
  \item The $(L,H_{m-1})$-morph $f_{m-1}:G\rightarrow N$ such that
    $f_{m-1}|_L=\theta$ determines a $G$-module structure $\rho_m$ on $A_m$.
  \item If $\Obs ([f_{m-1}])\neq 0\in H^2 (G, L;(A_m,\rho_m))$, then
    this branch of the process terminates.
  \item If $\Obs ([f_{m-1}])=0\in H^2 (G, L;(A_m,\rho_m))$, then
    we choose an $(L,H_{m})$-morph $f_{m}:G\rightarrow N$
    such that $\Res ([f_m])=[f_{m-1}]$.
  \item For each element of $H^1 (G, L;(A_m,\rho_m))$ we choose
    a different $f_m$ branching the process. (The choices
    different by an element of $B^1 (G, L;(A_m,\rho_m))$
    are equivalent, not requiring the branching.)
  \item We change $m$ to $m+1$ and go to step (1).
  \end{enumerate}
  An $\bA G$-module structure $\Theta$ on $(V,\theta)$ compatible
  with its $\bA L$-structure
  is equivalent to $f_k$ for one of the non-terminated branches.
  Distinct non-terminated branches produce (as $f_k$)
  non-equivalent compatible $\bA G$-module structures.
\end{theorem}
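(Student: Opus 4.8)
The plan is to read the $k$-step process as $k$ successive applications of Theorem~\ref{exact_seq}, wrapped in a dictionary between module structures and morphs. First I would translate the two ends of the process. An $\bA G$-module structure $\Theta$ on $V$ with $\Theta|_{\bA L}=\theta$ is the same datum as a group homomorphism $\Theta:G\to K=\Aut_{\bA}V$ extending $\theta$; since each $\Theta(g)$ conjugates $\theta(L)=\Theta(L)$ into itself it normalises $H=C_K(\theta(L))$, so $\Theta(G)\subseteq N$ and $\Theta$ is a genuine $(L,H_j)$-morph from $G$ to $N$ restricting to $\theta$ for \emph{every} $j$ (conditions (1)--(3) because $\Theta$ is a homomorphism, (4) because $H_j\subseteq H_0=C_K(\theta(L))$); conversely an $(L,H_k)=(L,1)$-morph $G\to N$ restricting to $\theta$ is exactly such a $\Theta$. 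I would also note that any $\bA G$-isomorphism between two compatible structures restricts to an $\bA L$-automorphism and hence lies in $H_0$, so ``equivalent'' here means ``conjugate by an element of $H_0$''; and that the $(L,H_0)$-morph $f_0=f$ furnished by $G$-stability (Lemma~\ref{1Hhom} and the normalisation in Section~\ref{s2.4}) has a well-defined class which, by the computation $\,^{\Theta(x)}\theta=\theta^x$, equals the $(L,H_0)$-class of any compatible $\Theta$.

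The single step of the recursion is then literally one application of Theorem~\ref{exact_seq}. Given an $(L,H_{m-1})$-morph $f_{m-1}:G\to N$ with $f_{m-1}|_L=\theta$, conjugation by $f_{m-1}$ makes $A_m=H_{m-1}/H_m$ into a $G$-module $\rho_m$, depending only on the class of $f_{m-1}$ since inner automorphisms act trivially on an abelian quotient. I would apply Theorem~\ref{exact_seq} (in its version for genuine morphs, noted in the remark after its proof) with ambient group $N$, with ``$H$''~$=H_{m-1}$, ``$\widetilde H$''~$=H_m$ --- normal in $N=N_N(H_{m-1})$ because the series is $N$-stable --- and with ``$A$''~$=A_m$ carrying $\rho_m$. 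Parts (5)--(6) say that $\Obs([f_{m-1}])\in H^2(G,L;(A_m,\rho_m))$ vanishes exactly when $f_{m-1}$ admits an $(L,H_m)$-lift restricting to $\theta$, i.e. exactly when step $m$ does not terminate; parts (2)--(3) say the classes of these lifts form a torsor under $Z^1(G,L;(A_m,\rho_m))$; and part~(4), in which the condition $[f_m(L),h]\subseteq H_m$ is automatic because $f_m(L)=\theta(L)$ is centralised by $H_{m-1}\subseteq H_0$, identifies the $B^1$-orbits (up to the equivalence within the classes) with the $H_{m-1}$-conjugation orbits. Hence the essentially different lifts are indexed by $H^1(G,L;(A_m,\rho_m))$, which is exactly step~(4) of the process.

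To assemble the statement, for existence I would, given a compatible $\Theta$, run the process along the chain of classes of $\Theta$ viewed as an $(L,H_m)$-morph for $m=0,1,\dots,k$: by the dictionary this chain begins at the class of $f_0$ and each $\Res$ sends the $m$-th entry to the $(m-1)$-st, so every obstruction along it vanishes, the branch does not terminate, and at each stage $\Theta$ lies in the $B^1$-orbit of one of the chosen representatives; adjusting $\Theta$ successively by elements of the layers $H_{j-1}\subseteq H_0$ produces a non-terminated branch with $f_k$ conjugate to $\Theta$ by an element of $H_0$, that is, equivalent to $\Theta$. For the converse, that distinct non-terminated branches give inequivalent $f_k$, one takes two branches that first diverge at stage $m$: they choose representatives $f_m,f_m'$ over the same $[f_{m-1}]$ in different $B^1$-orbits, hence not $H_{m-1}$-conjugate, and one propagates this downward using that the $H_j$ are normal in $N$ --- so conjugation by any $h\in H_0$ carries a branch to a branch and commutes with every $\Res$ --- to conclude that an $\bA G$-isomorphism $f_k'=hf_kh^{-1}$ would force the branches to coincide at every stage.

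The hard part will be precisely this last propagation: verifying that the global equivalence ``conjugate by some $h\in H_0$'' on compatible structures refines, stage by stage, to exactly the $B^1$-orbit bookkeeping carried out by the recursion, so that two genuinely different branches can never produce $\bA G$-isomorphic structures. This is where the $N$-stability of the whole series $H=H_0\rhd H_1\rhd\cdots\rhd H_k=\{1\}$ is needed, together with the vanishing of the commutator terms in Theorem~\ref{exact_seq}(4) that let one slide conjugating elements between consecutive layers $H_j$; everything else is a matter of unwinding Theorem~\ref{exact_seq} and the definitions.
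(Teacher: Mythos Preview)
Your proposal is correct and follows exactly the approach the paper intends: the paper's entire proof is the sentence ``The following theorem is the direct application of Theorem~\ref{exact_seq}\ldots\ The proof is obvious,'' and what you have written is precisely that direct application, spelled out. Your dictionary between compatible $\bA G$-structures and $(L,1)$-morphs, your identification of the single step with one instance of Theorem~\ref{exact_seq} (with ambient group $N$, $H=H_{m-1}$, $\widetilde H=H_m$), and your use of parts (2)--(6) to justify the branching and obstruction steps are all exactly right; in particular your observation that the commutator condition in part~(4) is vacuous here because $\theta(L)$ centralises every $H_j$ is the point that makes the $B^1$-orbits coincide with $H_{m-1}$-conjugacy classes.

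The one place you go beyond the paper is the final paragraph, where you flag the propagation argument for ``distinct branches give inequivalent structures'' as the hard part. The paper does not address this at all, so you are not diverging from it but rather being more careful: your outline (conjugation by $h\in H_0$ is $\Res$-equivariant because each $H_j$ is normal in $N$, hence an equality $f_k'=hf_kh^{-1}$ forces agreement of the branches at the first stage of divergence via part~(4)) is the right shape, and your caution that this step needs the $N$-stability of the full series and the automatic vanishing of the commutator hypothesis in (4) is well placed.
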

This process is subtle as $\rho_m$ is revealed only
when $f_{m-1}$ is computed.
It would be useful to
have stability, i.e., the fact the $G$-modules
$(A_m,\rho_m)$ are the same (isomorphic) for different
branches. The actions $\rho_m$ on $A_m=H_{m-1}/H_m$
on different branches
differ by conjugation via a function $G\rightarrow H_{m-2}$.
Thus, one needs all two-step quotients $H_{m-1}/H_{m+1}$
to be abelian to ensure stability.
Having said that, we can still have some easy criteria
for existence, uniqueness and non-uniqueness.

\begin{cor} {\rm (Existence Test)}
  Suppose $H^2 (G, L;(A_m,\rho_m))=0$ for all $m$ for one of the branches.
  Then this branch does not terminate and an $\bA G$-module structure exists.
\end{cor}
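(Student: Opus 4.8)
The plan is to take the branch furnished by the hypothesis and run the recursive procedure of Theorem~\ref{branching} along it, verifying that the assumption on the groups $H^2(G,L;(A_m,\rho_m))$ eliminates the only step --- step (2) --- at which the procedure could halt. Since $H$ is soluble, the chosen $N$-stable subnormal series $H=H_0\rhd H_1\rhd\cdots\rhd H_k=\{1\}$ has finite length $k$, so it suffices to show that the step of Theorem~\ref{branching} can be carried out $k$ times without the branch terminating, and then to interpret the resulting $f_k$.

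The procedure is initialised with the $(L,H_0)$-morph $f_0=f$ produced --- via Lemma~\ref{1Hhom} and the normalisation $f|_L=\theta$ of Section~\ref{s2.4} --- by the $G$-stability of $(V,\theta)$. I would then argue by induction on $m$: suppose that after $m-1$ passes we hold an $(L,H_{m-1})$-morph $f_{m-1}\colon G\to N$ with $f_{m-1}|_L=\theta$. Step (1) produces from it the $G$-module $(A_m,\rho_m)$ on $A_m=H_{m-1}/H_m$, and the obstruction class $\Obs([f_{m-1}])=[f_{m-1}^\sharp]$ lives in $H^2(G,L;(A_m,\rho_m))$, which is zero by hypothesis; hence $\Obs([f_{m-1}])=[0]$ and step (2) does not apply. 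Now invoke Theorem~\ref{exact_seq} with $H$ replaced by $H_{m-1}$, $\widetilde{H}$ by $H_m$, and $A$ by $A_m$ (a legitimate choice, since $H_m$ is normal in $N$ with abelian quotient $A_m$): part (6) asserts exactness of the sequence (\ref{exact_seq0}), so $\Obs([f_{m-1}])=[0]$ forces $[f_{m-1}]$ into the image of $\Res$. Thus there is an $(L,H_m)$-morph $f_m\colon G\to N$ with $\Res([f_m])=[f_{m-1}]$, and in particular $f_m|_L=\theta$; this is exactly step (3), while steps (4) and (5) merely record the branching over $H^1(G,L;(A_m,\rho_m))$ and advance the counter. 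This closes the induction and shows the branch does not terminate.

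It remains to read off the conclusion. After $k$ passes we have an $(L,H_k)$-morph $f_k\colon G\to N$ with $f_k|_L=\theta$; since $H_k=\{1\}$, conditions (2) and (4) in the definition of an $(L,H)$-morph are vacuous and condition (3) becomes $f_k(x)f_k(y)=f_k(xy)$, so $f_k$ is a group homomorphism $G\to K$ extending $\theta$, i.e.\ an $\bA G$-module structure on $(V,\theta)$ compatible with its $\bA L$-structure; by the closing assertion of Theorem~\ref{branching} the morph $f_k$ is equivalent to such a structure $\Theta$, so one exists. I expect the only delicate point to be purely organisational: identifying the correct instance of Theorem~\ref{exact_seq} at each stage and confirming that an obstruction constrained to lie in the zero group automatically meets the hypothesis of step (3). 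There is no cohomological or geometric content beyond Theorem~\ref{branching} itself, which is why the corollary is essentially immediate.
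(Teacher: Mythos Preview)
Your proposal is correct and is precisely the argument the paper has in mind: the corollary is recorded without proof because it follows immediately from Theorem~\ref{branching}, and you have simply unpacked that immediacy by running the induction and invoking the exactness in Theorem~\ref{exact_seq}(6) at each stage. There is nothing to add or correct.
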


\begin{cor}\label{Uniq} {\rm (Uniqueness Test)}
  Suppose $H^1 (G, L;(A_m,\rho_m))=0$ for all $m$
  for one of the non-terminating branches.
  Then this branch is the only branch. Moreover, if 
  an $\bA G$-module structure exists, it is unique up to an isomorphism.
\end{cor}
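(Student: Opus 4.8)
The plan is to deduce this from Theorem~\ref{branching} together with parts (2)--(4) of Theorem~\ref{exact_seq}, applied step by step along the subnormal series. Write $B_0$ for the given non‑terminating branch, with morphs $f_0=f,f_1,\dots ,f_k$ and actions $\rho_1,\dots ,\rho_k$, so that $H^1(G,L;(A_m,\rho_m))=0$ for all $m$, and put $\Theta_0\coloneqq f_k$; since $B_0$ does not terminate, $\Theta_0$ is an actual $\bA G$‑module structure on $(V,\theta)$ with $\Theta_0|_{\bA L}=\theta$. By the last two sentences of Theorem~\ref{branching}, the non‑terminated branches correspond bijectively to isomorphism classes of such structures. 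Hence \emph{both} assertions of the corollary follow once one shows: every $\bA G$‑module structure $\Theta$ on $(V,\theta)$ with $\Theta|_{\bA L}=\theta$ is isomorphic, as an $\bA G$‑module, to $\Theta_0$.

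First I would record the elementary facts. Such a $\Theta\colon G\to K=\Aut_{\bA}V$ normalises $\theta(L)$ because $L$ is normal in $G$, hence normalises $H=C_K(\theta(L))$ and, being contained in $N$, normalises every $H_m\lhd N$ and centralises them off $\theta(L)$; thus $\Theta$ is an $(L,H_m)$‑morph restricting to $\theta$ for every $m$, and I write $[\Theta]_m$ for its class. Moreover $\Theta(x)$ and $f(x)$ conjugate $\theta(L)$ in the same way, so $\Theta(x)f(x)^{-1}\in H$ and $[\Theta]_0=[f]=[f_0]$.

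The core is an induction on $m$ producing $\phi_m\in H$ with $[\phi_m\Theta\phi_m^{-1}]_m=[f_m]$; the case $m=0$ is the remark just made, with $\phi_0=1$. Assume $\phi_{m-1}$ has been found and set $\Psi\coloneqq\phi_{m-1}\Theta\phi_{m-1}^{-1}$, so $[\Psi]_{m-1}=[f_{m-1}]$. Then $[\Psi]_m$ and $[f_m]$ are elements of $[LH_m]^\theta{\mbox{\rm mo}}(G,N)$ both restricting under $\Res$ to $[f_{m-1}]$; because $\Psi(x)f_m(x)^{-1}\in H_{m-1}$ for all $x$ (both lie in $f_{m-1}(x)H_{m-1}$, which is normalised by $f_{m-1}(x)\in N$) and $A_m=H_{m-1}/H_m$ is abelian, conjugation by $\Psi(x)$ and by $f_m(x)$ induce the same automorphism of $A_m$, so both classes lie in the fibre $[LH_m]^\theta{\mbox{\rm mo}}(G,N)_{\rho_m}$ over the \emph{same} coefficient module $(A_m,\rho_m)$ that occurs on the branch $B_0$. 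By Theorem~\ref{exact_seq}(3) they differ by the action of some $\gamma\in Z^1(G,L;(A_m,\rho_m))$, and the hypothesis $H^1(G,L;(A_m,\rho_m))=0$ puts $\gamma$ in $B^1$. By Theorem~\ref{exact_seq}(4) there is then a single $h\in H_{m-1}$ with $[\theta(L),h]\subseteq H_m$ and $[h\Psi h^{-1}]_m=[f_m]$; since $h\in H_{m-1}\subseteq H$, put $\phi_m\coloneqq h\phi_{m-1}$.

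At $m=k$ we have $H_k=\{1\}$, so $[\;\cdot\;]_k$ is equality and $\phi_k\Theta\phi_k^{-1}=f_k=\Theta_0$, i.e.\ $(V,\Theta)\cong(V,\Theta_0)$ via $\phi_k$. This proves the claim, and with it the corollary: any non‑terminated branch produces a structure isomorphic to $\Theta_0$, hence (by the distinctness clause of Theorem~\ref{branching}) coincides with $B_0$, so $B_0$ is the only branch; and if an $\bA G$‑module structure extending $\theta$ exists, it is isomorphic to $\Theta_0$, hence unique up to isomorphism. I expect the only real subtlety to be the bookkeeping that allows Theorem~\ref{exact_seq} to be invoked with the same coefficient module $(A_m,\rho_m)$ at each level --- this is exactly why the induction first conjugates $\Theta$ so that $[\Psi]_{m-1}=[f_{m-1}]$ before treating step $m$ --- together with the use of the single‑element form of Theorem~\ref{exact_seq}(4): it is crucial that a $B^1$‑shift is realised by conjugation by one element of $H_{m-1}\subseteq H$, so that the conjugators $\phi_m$ accumulate into one isomorphism after the $k$ steps.
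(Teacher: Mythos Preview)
Your proof is correct for the substantive claim --- uniqueness of the $\bA G$-module structure up to isomorphism --- but it takes a more hands-on route than the paper. The paper regards the corollary as immediate from Theorem~\ref{branching}: step~(4) there says that branching at level $m$ is parametrised by $H^1(G,L;(A_m,\rho_m))$, and since all branches share $f_0$ (hence $\rho_1$), the hypothesis $H^1=0$ forces a unique $f_1$, hence a unique $\rho_2$, and inductively a single branch, which is the given non-terminating one. Your approach instead fixes an arbitrary compatible $\bA G$-structure $\Theta$ and constructs an explicit $H$-conjugation onto $\Theta_0$ level by level via Theorem~\ref{exact_seq}(3),(4); this is valid and has the virtue of exhibiting the isomorphism concretely, and your care in first conjugating so that $[\Psi]_{m-1}=[f_{m-1}]$ (so the coefficient module at level $m$ really is $(A_m,\rho_m)$) is exactly the right bookkeeping. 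One small wrinkle: your final paragraph deduces only that $B_0$ is the sole \emph{non-terminated} branch (via the distinctness clause of Theorem~\ref{branching}) and then asserts it is the only branch; to rule out terminated branches you need the paper's direct inductive observation that no branching ever occurs, which forces every branch to agree with $B_0$ at every level and hence never to terminate.
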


\begin{cor} {\rm (Non-Uniqueness Test)}
  Suppose $H^1 (G, L;(A_k,\rho_k))\neq0$ 
  for one of the non-terminating branches.
  Then there exist non-equivalent $\bA G$-module structures.
  \end{cor}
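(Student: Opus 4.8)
The plan is to read the statement off the recursive process of Theorem~\ref{branching} at its final stage $m=k$. Fix the non-terminating branch for which $H^1(G,L;(A_k,\rho_k))\neq 0$; being non-terminating, this branch has produced an $(L,H_{k-1})$-morph $f_{k-1}:G\rightarrow N$ with $f_{k-1}|_L=\theta$, and $\Obs([f_{k-1}])=0$ in $H^2(G,L;(A_k,\rho_k))$ (else it would terminate at step~$k$). Since $H_k=\{1\}$, here $A_k=H_{k-1}$, the equivalence classes of $(L,H_k)$-morphs are singletons, and an $(L,H_k)$-morph from $G$ to $N$ is simply a homomorphism $G\rightarrow N\subseteq K=\Aut_{\bA}V$ restricting to $\theta$ on $L$ --- that is, exactly an $\bA G$-module structure on $V$ compatible with its $\bA L$-structure. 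At step~(3) the process produces such an $f_k$ with $\Res(\langle f_k\rangle)=[f_{k-1}]$, and at step~(4) it branches into one choice of $f_k$ for each element of $H^1(G,L;(A_k,\rho_k))$, two choices differing by $B^1(G,L;(A_k,\rho_k))$ being identified; since $H^1(G,L;(A_k,\rho_k))\neq 0$ has at least two elements this produces at least two distinct non-terminated branches, which by the final assertion of Theorem~\ref{branching} give at least two non-equivalent compatible $\bA G$-module structures.

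To exhibit the two structures concretely --- which amounts to unwinding the proof of Theorem~\ref{branching} at the last stage via Theorem~\ref{exact_seq}, applied with $H=H_{k-1}$, $\widetilde H=H_k$ and $A=A_k$ --- I would argue as follows. By the exactness asserted in Theorem~\ref{exact_seq}(6), vanishing of $\Obs([f_{k-1}])$ places $[f_{k-1}]$ in the image of $\Res$, so the fibre $\Res^{-1}([f_{k-1}])$ is nonempty; fix $f_k$ in it, and let $\Theta$ be the corresponding compatible $\bA G$-module structure. Now, using $H^1(G,L;(A_k,\rho_k))\neq 0$, pick a cocycle $\gamma\in Z^1(G,L;(A_k,\rho_k))$ with $[\gamma]\neq 0$ and set $f_k':=\gamma\cdot f_k$ via the free $Z^1(G,L;(A_k,\rho_k))$-action of Theorem~\ref{exact_seq}(2); since $H_k=\{1\}$ the lift $A_k\rightarrow H_{k-1}$ appearing there is the identity, so simply $f_k'(x)=\gamma(x)f_k(x)$. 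By Theorem~\ref{exact_seq}(3), $\Res$ is the quotient map by this action, so $\Res(\langle f_k'\rangle)=\Res(\langle f_k\rangle)=[f_{k-1}]$; thus $f_k'$ also lies in the fibre and defines a second compatible structure $\Theta'$, with $f_k'\neq f_k$ (otherwise $\gamma\equiv 1$, contradicting $[\gamma]\neq 0$).

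To see that $\Theta$ and $\Theta'$ are non-equivalent: by freeness of the $Z^1(G,L;(A_k,\rho_k))$-action, $\langle f_k\rangle$ and $\langle f_k'\rangle=\gamma\cdot\langle f_k\rangle$ can lie in a common $B^1(G,L;(A_k,\rho_k))$-orbit only if $\gamma\in B^1(G,L;(A_k,\rho_k))$, which is false; hence they lie in distinct $B^1$-orbits. In Theorem~\ref{exact_seq}(4) the side condition $[f_k(L),h]\subseteq\widetilde H=H_k=\{1\}$ is here automatic, since $f_k(L)=\theta(L)$ while $H_{k-1}\leq H_0=\Aut_{\bA L}V=C_K(\theta(L))$, so every $h\in H_{k-1}$ centralises $\theta(L)$; thus Theorem~\ref{exact_seq}(4) says precisely that no $h\in H_{k-1}$ conjugates $f_k$ to $f_k'$, i.e., that $f_k$ and $f_k'$ lie on distinct non-terminated branches, whence by the final assertion of Theorem~\ref{branching} the structures $\Theta$ and $\Theta'$ are inequivalent.

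I do not expect a genuine obstacle here: the statement is a direct corollary of Theorem~\ref{branching} (equivalently of Theorem~\ref{exact_seq}), in the spirit of the Existence and Uniqueness Tests above. If one runs the argument purely through Theorem~\ref{exact_seq}, the only point needing care is matching the cohomological relation ``lie in a common $B^1$-orbit'' with honest equivalence of $\bA G$-module structures; this is handled cleanly by the observation that at the top $H_{k-1}$ of the chosen subnormal series the side condition $[f(L),h]\subseteq H_k$ is vacuous, so the whole matter collapses to the freeness of the $Z^1$-action of Theorem~\ref{exact_seq}(2), the remaining ingredients (nonemptiness of the fibre and the identification of $(L,H_k)$-morphs with ordinary homomorphisms) being immediate.
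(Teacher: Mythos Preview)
Your proposal is correct and follows the paper's approach: the paper offers no explicit proof, treating the corollary as an immediate consequence of Theorem~\ref{branching}, which is precisely what your first paragraph does. Your second and third paragraphs merely make the argument explicit by unwinding Theorem~\ref{exact_seq} at the final stage; this is additional detail rather than a different route.
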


%\subsection{Module extension when $L$ is not normal}
%\label{s2.4a}
%We outline a situation where our machinery
%and the complex $C^\bullet(G,L;A)$ could be useful
%for a not necessarily normal subgroup $L$.
%Let $L_i$, $i$ running over an index set $\cI$, 
%be all conjugate to $L$ subgroups of $G$.
%Consider a family $(V,\theta_i)$, $i\in \cI$
%of $\bA L_i$-module structures on the same ${\bA}$-module
%$V$. We'd like to know whether there exists
%an $\bA G$-module structure $(V,\Theta)$
%such that $\theta_i = \Theta\mid_{L_i}$
%for all $i\in \cI$. 
%
%
%Let $K=\Aut_{\bA}V$.
%We can think of $(\theta_i)$
%as an element of the set 
%$X= \prod_{i\in\cI}\hom (L_i,K)$.
%As before, $X$ is a $K$-$G$-set: $G$ acts by conjugation on
%subgroups $L_i$
%twisting the $\bA L_i$-module structures.
%$K$ acts by the diagonal conjugations
%on the target. The stabliser $H=\Sta_K \big( (\theta_i)\big)$
%consists of the common automorphisms of all $(V,\theta_i)$. 
%
%The stability condition, necessary for existence of $(V,\Theta)$, 
%is the orbit inclusion $(\theta_i)^G\subseteq \,^K(\theta_i)$.
%By Lemma~\ref{1Hhom} this gives a $(1,H)$-morph
%$f : G \rightarrow K$ and our machinery can be rolled out.
%We leave details to an interested reader.

\subsection{Extension from not necessarily normal subgroups}

In Section \ref{s2.4} we restrict our attention to the case of $L$ being a normal subgroup of $G$. Let us take a moment to examine how Section \ref{s2.4} works if $L$ is not normal.

Set $P\coloneqq\bigcap_{g\in G}L^g$, where $L^g\coloneqq g^{-1}Lg$. 
Let $\bA$ be an associative ring, $(V,\theta)$ an
$\bA L$-module. Note that $(V,\theta)$ is also an $\bA P$-module under restriction, so we can view $\theta$
as an element of the set
$X= \hom (P,K)$. Let $K=\Aut_{\bA}V$ and $H=\Aut_{\bA P}V$ be
its automorphism groups, so $H$ is the centraliser in $K$ of $\theta (P)$.
By $N$, as before, we denote the normaliser of $H$ in $K$.

%Naturally, $X$ is a $K$-$G$-set: $G$ acts by conjugation on $P$
%twisting the $\bA P$-module structure. $K$ acts by conjugations
%on the target, while $H=\Sta_K (\theta)$.
As in Section \ref{s2.4}, $X$ is a $K-G$-set.
The $\bA L$-module $V$ is called {\em $G$-stable-by-conjugation}
if $(V,\theta)\cong (V,\theta^g)$ as $\bA[L\cap L^g]$-modules for all $g\in G$. Note that this condition guarantees that $V$ is $G$-stable as an $\bA P$-module.
This is equivalent to the orbit inclusion $\theta^G\subseteq \,^K\theta$.
By Lemma~\ref{1Hhom} this gives a $(1,H)$-morph
$f : G \rightarrow K$.

If $g \in L$, the $\bA[L\cap L^g]$-isomorphism $f(g) : (V,\theta) \rightarrow (V,\theta^g)$
can be chosen to be $\theta (g)$. Indeed,
$\theta (g) (\theta(h) v)=\theta (gh) (v) =
\theta (ghg^{-1}) (\theta(g)(v)) =
\theta^{g}(h)(\theta(g)(v))$ for $g\in L$, $h\in L\cap L^g$.
Then, without loss of generality $f\vert_L=\theta$,
and $f$ is an $(L,H)$-morph in $\LH(G,N)$.

This then allows us to proceed with the inductive process of Theorem \ref{branching} as before, when $H=\Aut_{\bA P}V$ is soluble.

\subsection{Comparison with $C^\bullet(G/L;A)$}
\label{s2.5}

When studying the question of extending representations from a normal subgroup, Dade and Th\'{e}venaz use the cohomology of the cochain complex $(C^\bullet(G/L;A),d)$ to control existence and uniqueness of such extensions.
In this paper, however, we use the cohomology complex
$(C^\bullet(G,L;A),d)$ instead. It is worth taking a moment
to compare the cohomology of these two complexes, and see where the difference in approaches arises. We use the notation of Section~\ref{s2.2},
assuming that cochains are normalised
since this does not affect the cohomology groups.

In order for the action of $G/L$ on $A$ to make sense,
we need to make the assumption that $L$ acts on $A$ trivially.
The reader can observe that this assumption holds in the case considered in Section~\ref{s2.4}, and, in fact, holds whenever one obtains the $G$-action on $A$
from an $(L,H)$-morph as opposed to a weak $(L,H)$-morph. With this assumption, we have the following proposition.

\begin{prop}\label{H1Map}
	Under the aforementioned conditions
	we have isomorphisms of groups
	\newline 
	$H^0(G,L;A) \cong H^0 (G/L;A)$
	and
	$H^1 (G,L;A)\cong H^1 (G/L;A)$.
\end{prop}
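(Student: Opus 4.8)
The plan is to prove both statements at once by showing that inflation along the quotient map $\pi\colon G\to G/L$ induces the isomorphisms. First I would record the setup: since $L$ acts trivially on $A$ we have $A^L=A$, and the $G$-action on $A$ factors through $G/L$. Precomposition with $\pi$ then sends a cochain $\nu\colon(G/L)^n\to A$ to $\nu\circ\pi^{\times n}\colon G^n\to A$; for $n\ge 1$ this vanishes on $L^n$ because $\pi$ kills $L$, and for $n=0$ it is the identity $A\to A^L$. Thus one obtains a morphism of complexes $\inf\colon C^\bullet(G/L;A)\to C^\bullet(G,L;A)$, injective in each degree, and hence maps $\inf\colon H^n(G/L;A)\to H^n(G,L;A)$; the task is to see these are bijective for $n=0,1$.

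For $n=0$ this is immediate: $\inf$ is literally the identity $A\to A$, it intertwines the differentials (the degree-$0$ differential in both complexes is $a\mapsto(g\mapsto g\cdot a-a)$, and this $1$-cochain factors through $G/L$ since $g\cdot a$ depends only on $gL$), so it identifies $H^0(G/L;A)=A^{G/L}=A^G$ with $\ker\bigl(d^0\colon C^0(G,L;A)\to C^1(G,L;A)\bigr)=A^G=H^0(G,L;A)$.

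For $n=1$ I would check that $\inf$ carries $Z^1(G/L;A)$ bijectively onto $Z^1(G,L;A)$ and $B^1(G/L;A)$ onto $B^1(G,L;A)$, and then pass to quotients. Injectivity on cocycles is clear; the content is surjectivity. Given $\mu\in Z^1(G,L;A)$ --- a $1$-cocycle $\mu\colon G\to A$ with $\mu|_L\equiv 0$ --- the cocycle identity gives, for $x\in G$ and $\ell\in L$, $\mu(x\ell)=x\cdot\mu(\ell)+\mu(x)=\mu(x)$, so $\mu$ is constant on left cosets of $L$ and descends to $\bar\mu\colon G/L\to A$; a one-line check (using that the action is inflated from $G/L$) shows $\bar\mu\in Z^1(G/L;A)$, and clearly $\inf(\bar\mu)=\mu$. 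For coboundaries, $B^1(G,L;A)=d^0\bigl(C^0(G,L;A)\bigr)$ and $B^1(G/L;A)=d^0\bigl(C^0(G/L;A)\bigr)$ correspond under the degree-$0$ identification $C^0(G/L;A)=C^0(G,L;A)=A$. Hence $H^1(G,L;A)=Z^1/B^1\cong Z^1(G/L;A)/B^1(G/L;A)=H^1(G/L;A)$.

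The one genuinely substantive point is the surjectivity of $\inf$ on $1$-cocycles --- that a $1$-cocycle of the relative complex automatically factors through $G/L$; everything else is bookkeeping with the normalised bar differential. It is worth flagging in the writeup that this factorisation breaks for $2$-cocycles (there are normalised $2$-cochains $G^2\to A$ vanishing on $L^2$ and satisfying the cocycle identity that are not inflated from $(G/L)^2$), which is exactly why $H^2(G,L;A)\neq H^2(G/L;A)$ in general and why the relative complex, not $C^\bullet(G/L;A)$, is the right object for detecting existence. An alternative route to the degree-$1$ statement is to compare the long exact sequence of Section~\ref{s2.2} with the five-term inflation-restriction sequence: both identify the group in question with $\ker\bigl(H^1(G;A)\to H^1(L;A)\bigr)$, since the restriction image automatically lands in the $G/L$-invariants of $H^1(L;A)$.
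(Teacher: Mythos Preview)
Your proof is correct and follows essentially the same route as the paper's: both use the inflation map along $G\to G/L$, note that $H^0$ is immediate, and for $H^1$ the key step is the observation that the cocycle identity $\mu(x\ell)=x\cdot\mu(\ell)+\mu(x)=\mu(x)$ forces any $\mu\in Z^1(G,L;A)$ to be constant on $L$-cosets. The only cosmetic difference is that you show $\inf$ is bijective separately on $Z^1$ and on $B^1$ before passing to the quotient, whereas the paper checks injectivity and surjectivity of $\Inf$ directly at the level of cohomology classes; your additional remarks on why the argument fails in degree~$2$ and the alternative via the five-term sequence are accurate and worth keeping.
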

\begin{proof}
	It is easy to see that 
	$H^0(G,L;A) = A^G =  H^0 (G/L;A)$.
	The natural map from the group of normalised cochains
	$$
	\inf : \widehat{C}^1(G/L;A) \rightarrow   C^1 (G,L;A), \ \ \ 
	\inf (\mu) (g) = \mu (g L).
	$$
	defines a map 
	$\Inf\coloneqq[\inf] : H^1(G/L;A) \rightarrow   H^1 (G,L;A)$
	of cohomology groups.
	It is injective because $\Inf([\mu])=0$ means that $\inf (\mu) = da$ for some $a\in A$.
	Then $\mu = da$ and $[\mu]=0$.
	
	It is surjective because for $\eta \in Z^1 (G,L;A)$ we have $d\eta =0$ that translates as
	$$
	\eta (gh) = \,^g(\eta (h)) + \eta (g) \ \ 
	\mbox{ for all } \ \ 
	g,h\in G.
	$$
	If one chooses $h\in L$, then it tells us that $\eta (gh) =  \eta (g)$,
        i.e., that $\eta$ is constant on $L$-cosets.
	Thus, the cocycle  
	$$
	\mu \in  \widehat{Z}^1(G/L;A) , \ \ \ 
	\mu (g L) \coloneqq \eta (g)
	$$
	is well-defined. By definition $\inf (\mu) = \eta$. 
\end{proof}

Considering the second cohomology of these complexes,
it is still possible to construct the inflation map $\Inf:H^2(G/L;A)\to H^2(G,L;A)$ in the natural way, but this map is no longer an isomorphism in general. We can still view  $H^2(G/L;A)$ as a subgroup of $H^2(G,L;A)$:

\begin{prop}\label{H2inj}
	The map $\Inf:H^2(G/L;A)\to H^2(G,L;A)$ is injective.
\end{prop}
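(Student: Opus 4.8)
The plan is to exhibit an explicit inverse construction at the level of cocycles, exactly mirroring the argument of Proposition~\ref{H1Map} but one degree up. First I would recall the inflation map on cochains: $\inf(\mu)(g,h) = \mu(gL,hL)$ for $\mu\in\widehat{C}^2(G/L;A)$, which lands in $C^2(G,L;A)$ since $\mu$ is normalised and hence vanishes when both arguments lie in $L$. This is a chain map, so it induces $\Inf$ on $H^2$; what must be shown is injectivity, i.e. that if $\inf(\mu) = d\eta$ for some $\eta\in C^1(G,L;A)$ then already $\mu = d\zeta$ for some $\zeta\in\widehat{C}^1(G/L;A)$.

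The key step is to show that such an $\eta$ is automatically constant on $L$-cosets, so that it descends to the desired $\zeta$ with $\inf(\zeta)=\eta$; then $\inf(\mu)=\inf(d\zeta)$ and injectivity of $\inf$ on cochains (it is clearly injective, being induced by the surjection $G\to G/L$) gives $\mu=d\zeta$. To see that $\eta$ descends, I would evaluate the coboundary identity $d\eta(g,h) = \,^g\eta(h) - \eta(gh) + \eta(g) = \inf(\mu)(g,h) = \mu(gL,hL)$ at suitable arguments. Taking $g\in G$ arbitrary and $h\in L$: since $L$ acts trivially on $A$ is \emph{not} what we need here — rather, $\mu(gL, 1_{G/L}) = 0$ by normalisation, so $d\eta(g,h)=0$, giving $\,^g\eta(h) = \eta(gh) - \eta(g)$. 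This is not yet constancy; instead take $g\in L$ and $h\in G$ arbitrary: then $\mu(1_{G/L}, hL)=0$, so $\,^g\eta(h) - \eta(gh) + \eta(g) = 0$, and since $\eta\in C^1(G,L;A)$ means $\eta|_L\equiv 0$ we get $\eta(g)=0$, hence $\eta(gh) = \,^g\eta(h) = \eta(h)$ using that $L$ acts trivially on $A$. Thus $\eta(gh)=\eta(h)$ for all $g\in L$, $h\in G$, which is precisely left-$L$-invariance, so $\eta$ factors through $G/L$.

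Having produced $\zeta\in C^1(G/L;A)$ with $\inf(\zeta)=\eta$, I would note $\zeta$ is normalised because $\eta$ is (or can be taken to be), so $\zeta\in\widehat{C}^1(G/L;A)$; then $\inf(d\zeta) = d\inf(\zeta) = d\eta = \inf(\mu)$, and since $\inf:\widehat{C}^2(G/L;A)\to C^2(G;A)$ is injective we conclude $\mu = d\zeta$, i.e. $[\mu]=0$ in $H^2(G/L;A)$. The main obstacle is really just keeping the bookkeeping straight about which coboundary evaluations force constancy versus the vanishing $\eta|_L\equiv 0$ — the substantive point is that the defining condition of $C^\bullet(G,L;A)$ (vanishing on $L^n$, together with the trivial $L$-action on $A$) is exactly strong enough to push a degree-one cochain down to $G/L$, whereas in degree two it is not, which is the reason the two complexes already differ at $H^2$.
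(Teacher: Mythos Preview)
Your proof is correct and follows essentially the same approach as the paper. Amusingly, the computation you abandon (with $h\in L$) is exactly the paper's argument: you overlooked that $\eta(h)=0$ since $\eta|_L\equiv 0$, which immediately yields $\eta(gh)=\eta(g)$; your symmetric version with $g\in L$ also works, at the mild cost of invoking the trivial $L$-action on $A$ and normality of $L$ to pass from left- to right-coset constancy.
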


	\begin{proof}
		%It is clear that if $[\eta]=0\in H^2(G/L;A)$ then $\Obs([f])=\Inf([\eta])=0\in H^2(G,L;A)$. 
	  If $\Inf([\eta])=0\in H^2(G,L;A)$ then there exists $\mu\in C^1(G,L;A)$ such that $d\mu=\inf(\eta)$. Note that $\inf(\eta)$ is constant on $L\times L$-cosets by construction. In particular, for $g\in G$ and $h\in L$, we have
          $$\mu (g) - \mu (gh) = \,^g(\mu (h)) + \mu (g) - \mu (gh) =  \inf(\eta)(g,h)=\inf(\eta)(g,1)=\inf(\eta)(1,1)=0\, ,$$
          using the cocycle condition in the penultimate equality.
          Hence, $\mu$ is constant on cosets of $L$ in $G$. In particular,
          if we define $\widetilde{\mu}\in \widehat{C}^1(G/L;A)$ by $\widetilde{\mu}(gL)=\mu(g)$ then we obtain that $\eta=d\widetilde{\mu}$ and so $[\eta]=0\in H^2(G/L;A)$.
	\end{proof}

        In the context of Theorem~\ref{exact_seq}, we can see that $H^2(G/L;A)$ and $H^2(G,L;A)$ can be made to play the same role in certain key cases.
        To that end, we say that an $(L,H)$-morph $f$ is {\em normalised} if $f(gh)=f(g)f(h)$ whenever $g\in G$ and $h\in L$. Note that this definition is independent of the subgroup $H$.

\begin{lemma}\label{Norm}
		In the context of Theorem~\ref{branching}, the $(L,H_i)$-morphs $f_i$ can be assumed to be normalised for each $i$. Furthermore, with this assumption, the cocycles $f_i^\sharp \in Z^2(G,L;A_{i+1})$ are constant on cosets of $L\times L$ in $G\times G$.
\end{lemma}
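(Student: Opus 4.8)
The plan is to prove the two assertions by a single induction on $i$, building the normalised morphs $f_i$ one layer at a time while tracking the coset-constancy of the associated cocycles. First I would treat the base case: the morph $f_0 = f$ produced from $G$-stability in Section~\ref{s2.4} already satisfies $f(gh) = f(g)\theta(h) = f(g)f(h)$ for $g \in G$, $h \in L$, because on $L$ we chose $f|_L = \theta$ and, for $g \in G$ and $h \in L$, the isomorphism $f(gh)\colon (V,\theta) \to (V,\theta^{gh})$ can be replaced by $f(g)\theta(h)$ since $\theta(h)\colon (V,\theta) \to (V,\theta^h)$ and $f(g)\colon (V,\theta^h) \to (V,(\theta^h)^g) = (V,\theta^{gh})$ compose correctly (the same computation as the one displayed just before Theorem~\ref{branching}). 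Thus $f_0$ is normalised without loss of generality.

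For the inductive step, suppose $f_{i-1}$ is normalised. From the recursive process of Theorem~\ref{branching} we obtain an $(L,H_i)$-morph $f_i$ with $\Res([f_i]) = [f_{i-1}]$, so $f_i(x) \in f_{i-1}(x)H_{i-1}$ for all $x$ (after replacing $f_i$ within its equivalence class if needed, noting that $f_{i-1}|_L = f_i|_L = \theta$). I would then \emph{correct} $f_i$ on each coset: fix a transversal $T$ for $L$ in $G$ containing $1$, keep $f_i$ on $T$, and for a general element $g = th$ with $t \in T$, $h \in L$, redefine $f_i(g) \coloneqq f_i(t)\theta(h)$. One checks this redefined map is still an $(L,H_i)$-morph in the same equivalence class: it agrees with the old $f_i$ modulo $H_i$ since $f_i(th) \in f_i(t)H_i$ already forced $f_i(th)$ and $f_i(t)\theta(h)$ to differ by an element of $H_i$ (here one uses that the old $f_i$ satisfies $f_i(t)f_i(h) \in f_i(th)H$, and $f_i(h) = \theta(h)$, together with the fact that the correcting element lies in $H$ and—after the adjustment—one verifies it lies in $H_i$ using $f_{i-1}$ normalised). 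The normalisation identity $f_i(gh') = f_i(g)\theta(h')$ for $g \in G$, $h' \in L$ is then immediate from the definition, since $(th)h' = t(hh')$ and $\theta$ is a homomorphism on $L$.

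For the second assertion, with $f_i$ normalised I would compute $f_i^\sharp(x,y) = f_i(x)f_i(y)f_i(xy)^{-1}\widetilde{H}_i \in A_{i+1} = H_i/H_{i+1}$ and show it depends only on the cosets $xL$ and $yL$. Given $x, y \in G$ and $h, h' \in L$, normalisation gives $f_i(xh) = f_i(x)\theta(h)$ and $f_i(yh') = f_i(y)\theta(h')$, and also $(xh)(yh') = x(hy)h' = x y (y^{-1}hy) h'$ with $y^{-1}hy \in L$ (since $L \trianglelefteq G$), so $f_i((xh)(yh')) = f_i(xy)\theta(y^{-1}hy)\theta(h') = f_i(xy)\theta(y^{-1}hyh')$. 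Substituting and passing to $N/\widetilde{H}_i$, the $\theta(\cdot)$ factors lie in $\theta(L) \subseteq C_K(H) \subseteq C_K(H_i)$—here I use that $f_i$ is an $(L,H_i)$-\emph{morph}, so condition (4) in the definition gives $f_i(L) = \theta(L) \subseteq C_K(H_i)$, hence these factors are central modulo $\widetilde{H}_i$ and in fact lie in $H_i \cap C_K(H_i) \subseteq \widetilde{H}_i$ once one checks $\theta(L) \cap H_i \subseteq \widetilde{H}_i$; more directly, since $A_{i+1}$ is abelian and $\theta(h) \in H \subseteq H_{i-1}$ need not lie in $H_i$, the cleaner route is to observe $\theta(L) \subseteq C_K(H)$ acts trivially by conjugation on all of $H$ and in particular $f_i(x)\theta(h) \cdots = \cdots \theta(h)$ can be commuted freely, and the $\theta$-terms cancel between numerator and denominator in the expression for $f_i^\sharp$. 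Carrying this out shows $f_i^\sharp(xh, yh') = f_i^\sharp(x,y)$ in $A_{i+1}$, which is the claim.

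The main obstacle I anticipate is the bookkeeping in the inductive step: verifying that the transversal-based correction of $f_i$ keeps it inside the \emph{same} equivalence class \emph{and} preserves membership in $[L\widetilde{H}_i]^\theta{\mbox{\rm mo}}(G,N)_{\rho_{i+1}}$ (the correct $G$-action $\rho_{i+1}$ on $A_{i+1}$), so that it may legitimately replace the $f_i$ chosen in step (3) of Theorem~\ref{branching} without disturbing the rest of the recursion. This requires checking the weak $(L,H_i)$-morph condition (3), the normaliser condition (2), and that the $G$-module structure on $A_{i+1}$ is unchanged—all of which follow from $f_{i-1}$ being normalised and from $f_i(x) \in f_{i-1}(x)H_{i-1}$, but the argument must be assembled carefully. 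The coset-constancy of $f_i^\sharp$ is then a short direct computation using normality of $L$ and the centralising property $f_i(L) \subseteq C_K(H)$.
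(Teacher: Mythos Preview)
The paper's own proof is a one-line citation of Lemmas 9.2 and 9.4(i) in Karpilovsky, so your direct argument is necessarily more detailed than what appears in the text. Your treatment of the first assertion is sound: the transversal-based correction $f_i(th)\coloneqq f_i(t)\theta(h)$ does stay in the same $H_i$-equivalence class (because the weak morph condition already gives $f_i(th)\in f_i(t)\theta(h)H_i$), and the usual check shows the corrected map remains an $(L,H_i)$-morph inducing the same $\rho_{i+1}$. That part is fine.

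The gap is in your proof of the second assertion. You attempt to cancel the $\theta(\cdot)$ factors by invoking $\theta(L)\subseteq C_K(H)$, but this only lets $\theta(h)$ commute with elements of $H$, not with $f_i(y)\in N$. Your parenthetical ``$\theta(h)\in H\subseteq H_{i-1}$'' is simply false: $\theta(h)$ lies in $K$, and $H=C_K(\theta(L))$ contains $\theta(L)$ only when $\theta(L)$ is abelian. So neither of your two suggested routes actually closes the computation.

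What is missing is the intertwining relation
\[
f_i(y)\,\theta(h)\,f_i(y)^{-1}=\theta\bigl(yhy^{-1}\bigr)\qquad(h\in L,\ y\in G),
\]
which holds for the original $f_0$ by construction (this is exactly the statement $\,^{f_0(y)}\theta=\theta^y$ from Section~\ref{s2.4}) and persists for every $f_i$ because $f_i(y)\in f_0(y)H$ and $H$ centralises $\theta(L)$. With this in hand, right-normalisation gives $f_i^\sharp(y,h)=0$ for $h\in L$ immediately, and the intertwining relation gives $f_i^\sharp(h,y)=\theta(h)f_i(y)\bigl(f_i(y)\theta(y^{-1}hy)\bigr)^{-1}=0$ as well; the full $L\times L$-coset constancy then drops out of the $2$-cocycle identity (using that $L$ acts trivially on $A_{i+1}$). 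This is essentially the content of Karpilovsky's Lemma 9.4(i), and it is the step your proposal needs to repair.
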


\begin{proof}
	These results follow easily from Lemma 9.2 and Lemma 9.4(i) in Karpilovsky \cite{Kar}.
\end{proof}

For the remainder of this section 
we assume that all morphs are normalised. The second statement of
Lemma~\ref{Norm}
immediately yields that, given an $(L,H)$-morph $f$, $\Obs([f])$ lies in the image of the natural homomorphism $\Inf:H^2(G/L;A)\to H^2(G,L;A)$.
The discussion in this section yields the following result.

\begin{cor}\label{H2Map}
  Let $f$ be a normalised $(L,H)$-morph. Then there exists $\eta\in Z^2(G/L;A)$
  with $\Inf([\eta])=\Obs([f])$. Furthermore, $\Obs([f])=0\in H^2(G,L;A)$ if and only if $[\eta]=0\in H^2(G/L;A)$.
\end{cor}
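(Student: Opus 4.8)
The plan is to produce $\eta$ by pushing the $2$-cocycle $f^\sharp$ down along the quotient map $G\to G/L$, using Lemma~\ref{Norm} to see this is legitimate, and then to read off both assertions from Proposition~\ref{H2inj}. First I would recall from Theorem~\ref{exact_seq}(5) that $\Obs([f])=[f^\sharp]$, where $f^\sharp(x,y)=f(x)f(y)f(xy)^{-1}\widetilde{H}\in A$, and from the second statement of Lemma~\ref{Norm} that, $f$ being a normalised $(L,H)$-morph, the cocycle $f^\sharp$ is constant on the cosets of $L\times L$ in $G\times G$. Hence the formula $\eta(g_1L,g_2L):=f^\sharp(g_1,g_2)$ gives a well-defined function $\eta:(G/L)\times(G/L)\to A$; moreover $f(1)=1$ for a normalised morph (as $f(1)=f(1)^2$ in the group $N$), so one checks at once that $\eta$ is normalised.

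Next I would show $\eta\in Z^2(G/L;A)$. The inflation-on-cochains map $\inf:\widehat{C}^\bullet(G/L;A)\to C^\bullet(G,L;A)$ of Section~\ref{s2.5}, taken in all degrees in the obvious way, is an injective cochain map with $\inf(\eta)=f^\sharp$; since $f^\sharp\in Z^2(G,L;A)$ we get $\inf(d\eta)=d(\inf\eta)=df^\sharp=0$, and injectivity of $\inf$ forces $d\eta=0$. Thus $\eta$ is a cocycle, and by definition of the inflation homomorphism $\Inf([\eta])=[\inf(\eta)]=[f^\sharp]=\Obs([f])$, which is the first claim.

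Finally, the equivalence is immediate from this identity together with Proposition~\ref{H2inj}: if $[\eta]=0\in H^2(G/L;A)$ then $\Obs([f])=\Inf([\eta])=0$ because $\Inf$ is a homomorphism; conversely, if $\Obs([f])=0\in H^2(G,L;A)$ then $\Inf([\eta])=0$, and since $\Inf$ is injective by Proposition~\ref{H2inj} we conclude $[\eta]=0$. The only points needing care are bookkeeping ones: that the higher-degree inflation maps really land in $C^\bullet(G,L;A)$ and commute with $d$, and that $\eta$ is normalised — both routine once the definitions of Sections~\ref{s2.2} and~\ref{s2.5} are unwound. I do not anticipate a genuine obstacle, since the two substantive inputs, coset-invariance of $f^\sharp$ and injectivity of $\Inf$ on $H^2$, are precisely Lemma~\ref{Norm} and Proposition~\ref{H2inj}.
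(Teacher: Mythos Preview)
Your proposal is correct and follows essentially the same approach as the paper: the paper simply remarks that the second statement of Lemma~\ref{Norm} places $\Obs([f])$ in the image of $\Inf$, and that the equivalence then follows from the injectivity established in Proposition~\ref{H2inj}. You have merely made explicit the routine verifications (that the descended $\eta$ is a well-defined normalised cocycle via injectivity of $\inf$ on cochains) which the paper leaves implicit.
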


Combining Proposition~\ref{H1Map} and Corollary~\ref{H2Map}, we observe that Sections~\ref{s2.2} and \ref{s2.4} could be interpreted using the cochain complex $C^\bullet(G/L;A)$ at all points instead of the complex $C^\bullet(G,L;A)$ (although doing so would force us to work exclusively with normalised morphs instead of not-necessarily-normalised weak morphs). Indeed, this is the approach taken by Dade and Th\'{e}venaz in the contexts they consider. Our reasons for not taking this approach are threefold. Firstly, our new complex fits nicely into an exact sequence as described in Section~\ref{s2.2}.
Secondly, this complex is more natural to work with --
Dade and Th\'{e}venaz essentially move from the complex $C^\bullet(G/L;A)$ to the complex $C^\bullet(G,L;A)$ as described in this section, and then proceed as we do.
Finally, our main motivation in studying the case for abstract groups is to gain insight into the question for algebraic groups, where the procedures described in this section do not work smoothly (cf. Section~\ref{s3.5}). 

In particular, the reader should note that if $H$ is abelian then the corollaries at the end of Section \ref{s2.4} give precisely Corollary 1.8 and Proposition 2.1 in \cite{Thev}.

\section{$G$-stable modules for algebraic groups}
\label{s3}

In this chapter we consider algebraic groups over an algebraically closed field $\bK$ of positive characteristic $p$. Algebraic groups are affine and reduced,
groups schemes are affine and not necessarily reduced.

\subsection{Rational and algebraic $G$-modules}
\label{s3.1}
We distinguish algebraic and rational maps of algebraic varieties.
In particular, we can talk about algebraic and rational homomorphisms
of algebraic groups $f:G\rightarrow H$.
The latter are defined on an open dense subset $U=\dom(f)$ of $G$ containing $1$ and satisfy
$f(x)f(y)=f(xy)$ whenever $x,y,xy\in U$.

A rational automorphic $G$-action on a commutative algebraic group $H$ is a rational map $G\times H\rightarrow H$, defined on an open set $U\times H$ containing $1\times H$, with the usual action conditions and also such that for each $g\in U$ the map $x\mapsto \,^gx$ is a group automorphism of $H$. An algebraic $G$-action on $H$ is the same, but where the map $G\times H\rightarrow H$ is algebraic.

In an important case, the distinction between rational and algebraic maps can be essentially forgotten, as observed by Rosenlicht \cite{Ros2}.

\begin{lemma}\cite[Theorem 3]{Ros2}
	\label{rat_hom}
	Let $G$ and $H$ be algebraic groups with $G$ connected. 
	Suppose  $f:G\rightarrow H$
	is a rational homomorphism.
	Then $f$ extends uniquely to an algebraic group homomorphism $G\rightarrow H$.
\end{lemma}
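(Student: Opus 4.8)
The plan is to prove uniqueness first, which is immediate, and then to construct the extension explicitly by a translation trick, verifying in turn that it is well-defined, that it is a morphism, that it restricts to $f$, and that it is a group homomorphism. Throughout I use that a connected algebraic group over an algebraically closed field is irreducible, so that every nonempty open subset is dense and that $G\times G$ is again irreducible. Write $U=\dom(f)$, a dense open subset containing $1$. For uniqueness, any two algebraic homomorphisms $G\to H$ that agree on $U$ must coincide: the locus on which two morphisms into the separated variety $H$ agree is closed, and here it contains a dense set in the irreducible variety $G$, so it is all of $G$.

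For existence, I observe that for any $a\in G$ the set $U\cap a^{-1}U$ is an intersection of two dense open sets, hence nonempty and dense. I would then define, for $x\in U\cap a^{-1}U$,
$$ F(a) \coloneqq f(ax)\,f(x)^{-1}. $$
The well-definedness rests entirely on the relation $f(u)f(v)=f(uv)$ valid whenever $u,v,uv\in U$. Given $x,x'\in U\cap a^{-1}U$ with the additional property $(x')^{-1}x\in U$, applying this relation with $(u,v)=(x',(x')^{-1}x)$ gives $f(x')^{-1}f(x)=f((x')^{-1}x)$, while applying it with $(u,v)=(ax',(ax')^{-1}(ax))$ and noting $(ax')^{-1}(ax)=(x')^{-1}x$ gives $f(ax')^{-1}f(ax)=f((x')^{-1}x)$; comparing yields $f(ax)f(x)^{-1}=f(ax')f(x')^{-1}$. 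The extra constraint $(x')^{-1}x\in U$ cuts out a dense open condition, and for two arbitrary choices $x,x'$ one interpolates through a third point $x''\in U\cap a^{-1}U$ lying in the (nonempty, dense) intersection $xU^{-1}\cap x'U^{-1}$, so the value of $F(a)$ is independent of the choice.

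To see that $F$ is a morphism, I fix a point $a_0$ and choose $x_0\in U\cap a_0^{-1}U$. On the open neighbourhood $Ux_0^{-1}$ of $a_0$ (which contains $a_0$ since $a_0x_0\in U$) the assignment $a\mapsto f(ax_0)f(x_0)^{-1}$ is the composite of the translation $a\mapsto ax_0$, the morphism $f$ on $U$, and right multiplication by the constant $f(x_0)^{-1}$, hence is a morphism; by well-definedness it agrees with $F$ there. As $a_0$ ranges over $G$ these neighbourhoods cover $G$, and since the local pieces all equal $F$ they glue to a morphism $F:G\to H$. That $F$ extends $f$ is direct: for $x\in U$, choosing $x_0\in U\cap x^{-1}U$ gives $F(x)=f(xx_0)f(x_0)^{-1}=f(x)f(x_0)f(x_0)^{-1}=f(x)$.

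Finally, for the homomorphism property I consider the locus $\{(a,b)\in G\times G : F(ab)=F(a)F(b)\}$, which is closed as the preimage of $1_H$ under the morphism $(a,b)\mapsto F(ab)F(b)^{-1}F(a)^{-1}$. It contains the dense open set $\{(x,y):x,y,xy\in U\}$, on which $F=f$ and the defining relation of $f$ holds; since $G\times G$ is irreducible, this closed locus is everything, so $F$ is a homomorphism. The main obstacle is the well-definedness step: the real work is the bookkeeping of which products must lie in $U$ and the verification that these constraints are simultaneously satisfiable on a dense open set, so that the value of $F(a)$ is genuinely choice-independent and the local morphisms glue. Everything else follows formally from irreducibility together with the closedness of the agreement and homomorphism loci.
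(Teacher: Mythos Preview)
The paper does not supply its own proof of this lemma; it simply records it as Rosenlicht's result \cite[Theorem~3]{Ros2}. Your translation argument is correct and is essentially the classical one. It is also the same idea the paper deploys in its proof of the very next lemma (Lemma~\ref{rat_cocycle}, the twisted-cocycle generalisation for commutative $H$): there one defines $f_x(y)=f(yx)+{}^{yx}f(x^{-1})$ on the translated open set $Vx^{-1}$, checks that $f_x$ and $f_z$ agree on overlaps via a density argument, and glues to a global morphism $\widehat{f}$. Your version differs only in that you handle a noncommutative target, so you phrase the consistency check multiplicatively and argue pointwise from $f(u)f(v)=f(uv)$ rather than via the vanishing of the morphism $f_x-f_z$; the skeleton of the argument is otherwise identical.
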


When $H$ is commutative, this lemma is a special case of the next lemma. Indeed, if one takes the $G$-action on $H$ to be trivial, then the condition in the following lemma is precisely the condition for a map to be a homomorphism.

\begin{lemma}\label{rat_cocycle}
	Suppose that $G$ is a connected algebraic group and $(H,+)$ is a commutative algebraic group with an algebraic automorphic $G$-action $\rho$. Let $f:G\rightarrow H$ be a rational map such that $f(xy)=f(x)+ \,^x{f(y})$ for all $x,y,xy\in \dom(f)$ (where $\,^x f(y)\coloneqq\rho(x)(f(y))$). Then $f$ extends to an algebraic map satisfying $f(xy)=f(x)+ \,^x{f(y})$ for all $x,y\in G$.
\end{lemma}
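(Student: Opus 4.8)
The plan is to reduce the statement to the already-cited result of Rosenlicht (Lemma~\ref{rat_hom}) by forming a semidirect-product group in which a cocycle becomes a homomorphism. First I would form the algebraic group $G \ltimes_\rho H$, whose underlying variety is $G\times H$ and whose multiplication is $(g,a)(g',a') = (gg', a + \,^g a')$; this is an algebraic group because $\rho$ is an \emph{algebraic} automorphic action, which is precisely where the hypothesis that $\rho$ is algebraic (not merely rational) is used. Next I would consider the rational map
$$
\varphi : G \dashrightarrow G\ltimes_\rho H, \qquad \varphi(x) = (x, f(x)),
$$
defined on $\dom(f)$, which is open dense in $G$ and contains $1$ (after translating so that $f(1)=0$; note $f(1)=f(1)+\,^1 f(1)$ forces $f(1)=0$ anyway once $f$ is defined at $1$, and density lets us arrange $1\in\dom(f)$). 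The cocycle identity $f(xy) = f(x) + \,^x f(y)$ is exactly the statement that $\varphi(x)\varphi(y) = \varphi(xy)$ whenever $x,y,xy\in\dom(f)$, so $\varphi$ is a rational homomorphism of algebraic groups.

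Since $G$ is connected, Lemma~\ref{rat_hom} applies: $\varphi$ extends uniquely to an algebraic group homomorphism $\widetilde{\varphi}: G \rightarrow G\ltimes_\rho H$. Composing with the projection to the $G$-factor gives an algebraic homomorphism $G\to G$ which agrees with the identity on the dense set $\dom(f)$, hence equals the identity; therefore $\widetilde{\varphi}(x) = (x, \widetilde{f}(x))$ for an algebraic map $\widetilde{f}: G\to H$ (the composite of $\widetilde{\varphi}$ with the projection onto $H$, which is algebraic since the variety structure on $G\ltimes_\rho H$ is just the product). By construction $\widetilde{f}$ extends $f$, and the homomorphism property of $\widetilde{\varphi}$ in $G\ltimes_\rho H$ unwinds to $\widetilde{f}(xy) = \widetilde{f}(x) + \,^x\widetilde{f}(y)$ for all $x,y\in G$, which is the desired identity.

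The only genuinely substantive point — and the one I would be careful to spell out — is the verification that $G\ltimes_\rho H$ is an honest algebraic group and that a rational homomorphism into it is covered by Rosenlicht's theorem; this rests squarely on $\rho$ being algebraic rather than rational, since otherwise the multiplication map on $G\times H$ would itself only be rational and the group $G\ltimes_\rho H$ would not be available. Everything else is formal: the translation of the cocycle condition into multiplicativity, the identification of the extension's $G$-component with the identity by density, and the read-off of $\widetilde{f}$ from the second component. I do not expect any obstacle beyond bookkeeping once the semidirect-product reformulation is in place.
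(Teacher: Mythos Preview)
Your proof is correct and takes a genuinely different route from the paper's. The paper proves Lemma~\ref{rat_cocycle} directly: for each $x$ in a symmetric open $V\subseteq\dom(f)$ it defines a translate $f_x(y)\coloneqq f(yx)+\,^{yx}f(x^{-1})$, checks that $f_x$ agrees with $f$ on overlaps, patches the $f_x$ into a global algebraic map $\widehat f$, and then verifies the cocycle identity by a short computation. In effect the paper re-runs the Rosenlicht translation argument by hand, adapted to the twisted cocycle setting.

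Your argument is more conceptual: by passing to the semidirect product $G\ltimes_\rho H$ (legitimate precisely because $\rho$ is algebraic), the cocycle condition becomes exactly the statement that $x\mapsto (x,f(x))$ is a rational homomorphism, and then Lemma~\ref{rat_hom} does all the work. This is shorter and explains structurally why the lemma holds. One small point worth noting: the paper frames Lemma~\ref{rat_cocycle} as a \emph{generalisation} of the commutative case of Lemma~\ref{rat_hom} and accordingly gives an independent proof; your reduction instead uses the full non-commutative strength of Lemma~\ref{rat_hom} (since $G\ltimes_\rho H$ is not commutative). There is no circularity---Lemma~\ref{rat_hom} is quoted from Rosenlicht, not deduced from Lemma~\ref{rat_cocycle}---but it is a mild difference in logical dependency worth flagging.
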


\begin{proof}
	Since $f$ is rational and $G$ is connected, $\dom(f)=U\subset G$ is a dense open subset. Set $V=U\cap U^{-1}$.
	
	Fix $x\in V$. Consider the rational map	
	$$f_x:G\rightarrow H, \quad \quad \quad f_x(y)\coloneqq f(yx) + \,^{yx}f(x^{-1}).$$
	This map is rational since it is defined on the dense open set $Vx^{-1}$.
	Observe that on $V\cap Vx^{-1}$ we have that $f_x=f$ by the assumption on $f$.
	Now, let $x,z\in V$ and define the rational map
	$$f_{x,z}:G\rightarrow H, \quad \quad \quad f_{x,z}(y)\coloneqq f_x(y)-f_z(y).$$
	
	Then $f_{x,z}$ is defined on $Vx^{-1} \cap Vz^{-1}$. If the set $f_{x,z}^{-1}(H\setminus\{0\})$ is non-empty, it is open dense. Hence, it has non-empty intersection with $V\cap Vx^{-1}\cap Vz^{-1}$. However, since on $V\cap Vx^{-1}\cap Vz^{-1}$ we have $f=f_x=f_z$, this is impossible. Thus, we must have $f_{x,z}\equiv 0$ on $Vx^{-1}\cap Vz^{-1}$. In particular, if $y\in Vx^{-1}\cap Vz^{-1}$ then $f_x(y)=f_z(y)$.
	
	Therefore, the following map is a well-defined locally-algebraic, and hence algebraic, map
	$$\widehat{f}:G\rightarrow H, \quad \quad \quad \widehat{f}(y)\coloneqq f_w(y) \ \ \mbox{where} \ \ w\in y^{-1}V.$$
	This map clearly restricts to $f$ on $V$. Furthermore, it satisfies the condition from the lemma:
	
	Let $a,b\in G$. Choose $w\in b^{-1}a^{-1}V\cap b^{-1}V$ -- this exists since both these sets are open dense in $G$. We then have $abw\in V$ and $bw\in V$. The condition on $f$ tells us that
	$0=f(1)=f(bw)+ \,^{bw}f(w^{-1}b^{-1})$.
	Hence, we have the equations
	$$\widehat{f}(ab)=f_w(ab)=f(abw)+\,^{abw}f(w^{-1}),$$
	$$\widehat{f}(a)=f_{bw}(a)=f(abw)+\,^{abw}f(w^{-1}b^{-1}),$$
	$$\,^a\widehat{f}(b)=\,^a f_w(b)=\,^{a}f(bw)+ \,^{abw}f(w^{-1}).$$	
	This then gives us that $\widehat{f}(ab)=\widehat{f}(a)+ \,^{a}\widehat{f}(b)$, as required.
	
\end{proof}

Recall that a rational\footnote{It is a standard terminology,
  which slightly disagrees with our usage of the adjective {\em rational}.}
representation of an algebraic group $G$ is a vector space $V$, equipped
with an algebraic homomorphism $\theta: G\rightarrow \GL(V)$.
An immediate consequence of Lemma \ref{rat_hom} is that
if $G$ is connected, then $\theta$ is uniquely determined by
any of its restrictions to an open subset
and
any rational homomorphism of algebraic groups $G\to GL(V)$
determines a representation.

%An immediate consequence of Lemma \ref{rat_hom} is that to define a representation of a connected algebraic group $G$ one need only give a rational homomorphism of algebraic groups $\theta\colon G\to GL(V)$ (where $V$ is a $\bK$-vector space).

%%%Suppose that $V$ is a $\bK$-vector space and $G$ is a connected algebraic group over $\bK$. Let $\theta:G\rightarrow \GL(V)$ be a partial map. We say that $(V,\theta)$ is a $G$-module if $\theta$ is an algebraic homomorphism. By Lemma \ref{rat_hom}, this is equivalent to the condition that $\theta$ be a rational homomorphism.

Similar to the case of abstract groups, we have the following proposition:
\begin{prop}\cite[Section 4.3]{Xan}(cf. Proposition~\ref{aut_mod}.)\label{alg_aut}
	Suppose that $V$ is a finite-dimensional indecomposable $\fg$-module, where $\fg$ is the Lie algebra of the algebraic group $G$ over $\bK$. 
%an algebraically closed field $\bK$. Let $H=\Aut_\fg(V)$. 
Then as algebraic groups we have $$\Aut_\fg(V)=\bK^\times\times (1+J)$$ where $J$ is the Jacobson radical of $\End_\fg(V)$. Furthermore, $1+J$ is a connected unipotent algebraic subgroup of $\Aut_\fg(V)$.
\end{prop}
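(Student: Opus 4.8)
The plan is to deduce this directly from Proposition~\ref{aut_mod}, the only additional ingredient being that over an algebraically closed field the relevant division algebra collapses to $\bK$.

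First I would reduce to the setting of that proposition. The $\fg$-action on $V$ factors through the subalgebra $\bB\subseteq\End_\bK(V)$ generated by the image of $\fg$, which is finite-dimensional because $V$ is. A $\bK$-subspace of $V$ is a $\fg$-submodule if and only if it is a $\bB$-submodule, and one has $\End_\fg(V)=\End_\bB(V)=:\bE$ and $\Aut_\fg(V)=\Aut_\bB(V)=\GL_1(\bE)$, the unit group of $\bE$. In particular $V$ is indecomposable as a $\bB$-module and $\bE$ is a finite-dimensional $\bK$-algebra, so Proposition~\ref{aut_mod} applies with this $\bB$ and $M=V$. By part (1), $\bD\coloneqq\bE/J$ is a division algebra, where $J=J(\bE)$ is the Jacobson radical of $\End_\fg(V)$.

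Next I would show $\bD=\bK$. Since $\bD$ is finite-dimensional over $\bK$, for any $d\in\bD$ the subalgebra $\bK[d]$ is a finite-dimensional commutative domain, hence a field, i.e.\ a finite field extension of $\bK$; as $\bK$ is algebraically closed this forces $d\in\bK$, so $\bD=\bK$. In particular $\bD$ is separable over $\bK$, so we are in the situation of Proposition~\ref{aut_mod}(3), which gives $H=\Aut_\fg(V)=\GL_1(\bE)=\GL_1(\bK)\times U=\bK^\times\times(1+J)$ with $U=1+J$ a connected unipotent group, isomorphic as a variety to $J$.

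The only non-formal point is to record that these statements, phrased for abstract groups in Proposition~\ref{aut_mod}, are in fact statements about algebraic groups. Here $\Aut_\fg(V)=\GL_1(\bE)$ carries its natural structure as the group of units of the finite-dimensional algebra $\bE$, which is an open subvariety of the affine space underlying $\bE$; the isomorphisms of Proposition~\ref{aut_mod}(2)--(3) are given by the multiplication map of $\bE$ and its inverse, hence are morphisms of varieties. Thus the decomposition $\Aut_\fg(V)=\bK^\times\times(1+J)$ is an isomorphism of algebraic groups and $1+J$ is a connected unipotent algebraic subgroup. I expect this bookkeeping to be the only thing requiring care; the rest is an immediate application of Proposition~\ref{aut_mod}.
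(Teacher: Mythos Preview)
Your proposal is correct and is exactly the argument the paper intends: the proposition is stated without proof, citing Xanthopoulos and cross-referencing Proposition~\ref{aut_mod}, and your derivation---using that $\bK$ is algebraically closed to force $\bD=\bK$ and then invoking Proposition~\ref{aut_mod}(3)---is precisely what that cross-reference is meant to suggest. The final paragraph checking that the isomorphisms of Proposition~\ref{aut_mod} are morphisms of varieties is the only point not already explicit in Proposition~\ref{aut_mod}, and you handle it correctly.
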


\subsection{Rational and algebraic cohomologies}
\label{s3.2}
Let $H$ be an affine group scheme acting on 
an additive algebraic group $(A,+)$
algebraically by automorphisms. The following easy lemma 
shall be useful in what follows.
\begin{lemma}\label{prim}
	Let $H$ be an irreducible affine group scheme. Then $H$ is primary, i.e., every zero-divisor 
	in $\bK[H]$ lies inside the nilradical.
\end{lemma}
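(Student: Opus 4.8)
The statement asserts that if $H$ is an irreducible affine group scheme, then $\bK[H]$ is a primary ring, meaning every zero-divisor lies in the nilradical. Equivalently, modding out by the nilradical, the reduced ring $\bK[H]_{\mathrm{red}} = \bK[H_{\mathrm{red}}]$ is an integral domain. So the plan is to reduce to the reduced case and then invoke the standard fact that a connected (irreducible) algebraic group is an irreducible variety, hence its coordinate ring is a domain.

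\textbf{Step 1: Reduce modulo the nilradical.} Let $\mathfrak{n} \subset \bK[H]$ be the nilradical. Since $H$ is irreducible as a scheme, $\mathrm{Spec}\,\bK[H]$ has a unique irreducible component, which means $\mathfrak{n}$ is prime (it is the unique minimal prime). This already shows that $\bK[H]/\mathfrak{n}$ is a domain, so any zero-divisor of $\bK[H]$ must map to a zero-divisor of $\bK[H]/\mathfrak{n}$, forcing it to be $0$ there, i.e. to lie in $\mathfrak{n}$. Actually, once one observes that the nilradical of an irreducible scheme's coordinate ring is prime, the lemma is essentially immediate; the first thing I would do is make this observation precise, citing that for a Noetherian ring the minimal primes correspond to irreducible components of the spectrum, and irreducibility of the scheme means there is exactly one.

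\textbf{Step 2 (alternative, more self-contained route).} If one prefers not to quote the component--minimal-prime dictionary, argue directly: $H_{\mathrm{red}}$ is a reduced affine group scheme over the algebraically closed field $\bK$, hence (by Cartier's theorem, smooth in characteristic $0$ being irrelevant here, but reducedness over a perfect field suffices) an algebraic group in the classical sense, and its underlying topological space equals that of $H$, which is irreducible; a reduced scheme with irreducible underlying space has a domain as its coordinate ring, so $\bK[H_{\mathrm{red}}] = \bK[H]_{\mathrm{red}}$ is a domain. Then conclude exactly as in Step 1: a zero-divisor $a \in \bK[H]$ has image $\bar a$ a zero-divisor in the domain $\bK[H]_{\mathrm{red}}$, hence $\bar a = 0$, hence $a$ is nilpotent.

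\textbf{Main obstacle.} There is no serious obstacle — the content is just unwinding definitions and invoking that an irreducible scheme has a unique minimal prime. The only point requiring a word of care is making sure ``irreducible affine group scheme'' is being used in the topological sense (unique irreducible component of the underlying space), so that the nilradical is genuinely prime; the group structure plays no essential role beyond guaranteeing (via homogeneity) that ``irreducible'' and ``connected'' coincide, which is why the hypothesis is phrased the way it is. I would state the one-line argument of Step 1 as the proof and, if desired, remark that this is just the statement that the coordinate ring of an irreducible scheme has prime nilradical.
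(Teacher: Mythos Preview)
Your argument contains a genuine gap. The step ``any zero-divisor of $\bK[H]$ must map to a zero-divisor of $\bK[H]/\mathfrak{n}$'' is false: if $ab=0$ with $b\neq 0$, it may happen that $b\in\mathfrak{n}$, so that $\bar b=0$ and $\bar a$ is not a zero-divisor in the quotient. Having the nilradical prime (equivalently, $\mathrm{Spec}$ irreducible) does \emph{not} force every zero-divisor to be nilpotent. A concrete counterexample is $R=\bK[x,y]/(x^2,xy)$: the nilradical is $(x)$, which is prime (so $\mathrm{Spec}\,R$ is irreducible), yet $y$ is a zero-divisor ($xy=0$, $x\neq 0$) that is not nilpotent. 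Equivalently, ``primary'' means that $(0)$ has a \emph{unique associated prime}, while ``irreducible'' only gives a unique \emph{minimal} prime; embedded primes are not ruled out.

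Consequently your remark that ``the group structure plays no essential role'' is precisely where the proof breaks. The paper's argument uses the group structure in an essential way: writing $\bK[H]=\bK[y_1,\ldots,y_n]/I$ and taking a primary decomposition of $I$, the associated primes give finitely many embedded points of $H$, and translation by closed points of $H$ must permute this finite set. Homogeneity then forces the closures of the embedded points to cover $H_{\mathrm{red}}$, which is impossible over an algebraically closed field if any of them has positive codimension. This is exactly what kills the embedded primes and is unavailable for a general irreducible affine scheme such as the one above.
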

\begin{proof}
	The affinity of $H$ tells us that $\bK[H]=\bK[y_1,\ldots,y_n]/I$ for some $n\geq 1$ and some Hopf ideal $I$. In particular, 
	$I$ has a primary decomposition $I=Q_0\cap\ldots\cap Q_r$ (which we assume to be normal) with associated primes $P_0=\sqrt{I},P_1,\ldots,P_r$. From the perspective of group schemes, this uniquely endows $H$ with a finite collection $p_0,p_1,\ldots,p_r$ of embedded %(topological)
        points of $H$,
        %each of which is a generic point of
        %an irreducible closed subscheme of $H$ (say, $p_i$ lies on $S_i$) with $S_0=H$.
where $p_i$ is a generic point of
        the irreducible closed subscheme given by $Q_i$. 
        Furthermore, for $i>0$ each $p_i$ is of codimension at least one.
   %     Let $x$ be a closed point in $H$. By the continuity of left multiplication in $H$, the set $xp_0,xp_1,\ldots,xp_r$ corresponds to the associated primes of another primary decomposition of $I$.
        If $x$ is  a closed point in $H$, then the set $xp_0,xp_1,\ldots,xp_r$ corresponds to the associated primes of another primary decomposition of $I$.
        Hence, by uniqueness, $x$ acts on the set $p_0,p_1,\ldots,p_r$ by permutation. Thus, $H_{red}=\bigcup_{i=1}^{r}(\bigcup_{x {\tiny \mbox{ closed point}}}xp_i)_{red}=\bigcup_{i=1}^{r}(p_i)_{red}$. However, over an algebraically closed field, $H_{red}$ cannot be a finite union proper subvarieties. Hence, $r=0$ and the result follows.
\end{proof}

Define the cochain complex $(C^n_{Rat}(H;A),d)$ to consist of the rational maps
$H^n\rightarrow A$ defined at $(1,1,\ldots, 1)$
with the standard differentials of group cohomology.

A rational function $f$ on $H^n$ is defined
on an open dense subset $U\subseteq H^n$,
thus, $U$ has a non-empty intersection
$U_\alpha = U \cap H^n_\alpha$ 
with each irreducible component
$H^n_\alpha$ of $H^n$.
Since $H^n$ is a group scheme, its irreducible components are connected components
that yields the direct sum decomposition of functions:
$$
\bK [H^n] = \oplus_\alpha \bK [H^n_\alpha]. 
$$
Note that each $H_{\alpha}$ is isomorphic to an irreducible affine group scheme, so we can apply Lemma~\ref{prim}.
%The complement $H^n_\alpha \setminus U_\alpha$ is closed and proper,
%so it is the common zeroes of
%some non-zero-divisors $s_1, \ldots s_m \in \bK [H^n_\alpha]$.
%Thus, $U_\alpha$ contains the non-zeroes of
%the  non-zero-divisor $s_\alpha \coloneqq s_1 s_2 \cdots s_m \in \bK [H^n_\alpha]$
Thus, $U_\alpha$ is of the form $U(s_\alpha)$ for
a non-zero-divisor $s_\alpha \in \bK [H^n_\alpha]$
and $f= hs^{-1}$ for some $h \in \bK [H^n]$
and a non-zero-divisor $s \coloneqq (s_\alpha) \in \bK [H^n]$.
Thus, $f\in \bK[H^n]_S$,
the localised ring of functions 
obtained by inverting the set $S$ of all non-zero-divisors.

Writing functions on the algebraic group $A$ as
$\bK[A]=\bK[x_1, \ldots x_m]/I$, a rational $n$-cochain $\mu$
is uniquely determined by an $m$-tuple of rational functions
$(\mu_i)\in \bK[H^n]_S^m$ satisfying the relations of $I$. 
In particular, if each component of $H$ is infinitesimal,
$$
\bK[H^n]_S= \bK[H^n]
\ \ \mbox{ and } \ \
C^n_{Rat}(H;A)=C^n_{Alg}(H;A) \; ,
$$
where, in general,  
$(C^n_{Alg}(H;A),d)$ is the cochain subcomplex if $(C^n_{Rat}(H;A),d)$
that consists of those rational maps $H^n\rightarrow A$
which are, in fact, algebraic.

Let us now concentrate on a connected algebraic group $G$ and
its connected subgroup scheme $L$. 
There is another subcomplex of $(C^n_{Rat}(G;A),d)$ which we are interested in:
we define $(\widetilde{C}^\bullet_{Rat}(G,L;A),d)$ to consist of rational maps $G^n\rightarrow A$
that are trivial on $L^n$ (i.e., everywhere $0$ on $L^n$). As in the case of abstract groups, we define  $(C^\bullet_{Rat}(G,L;A),d)$ by 
$$C^n_{Rat}(G,L;A)=
   \begin{cases}
     \widetilde{C}^n_{Rat}(G,L;A), & \text{if}\ n>0, \\
     A^L, & \text{if}\ n=0 .
     \end{cases}
$$
%defining $C^n_{Rat}(G,L;A)=\widetilde{C}^n_{Rat}(G,L;A)$ for $n>0$ and $C^0_{Rat}(G,L;A)=A^L$.

There is a natural inclusion of cochain complexes $C^\bullet_{Rat}(G,L;A)\to C^\bullet_{Rat}(G;A)$. We can hence define the cochain complex $\widetilde{C}^\bullet_{Rat}(L;A)$ such that $\widetilde{C}^n_{Rat}(L;A)\coloneqq \Coker(C^n_{Rat}(G,L;A)\to C^n_{Rat}(G;A))$ for all $n\geq 0$.

In particular, this gives us the short exact sequence of cochain complexes
$$0\to C^\bullet_{Rat}(G,L;A)\to C^\bullet_{Rat}(G;A)\to \widetilde{C}^\bullet_{Rat}(L;A)\to 0.$$

We define the algebraic complexes $C^\bullet_{Alg}(G,L;A)$ and $\widetilde{C}^\bullet_{Alg}(L;A)$ in the expected way, and once again get a short exact sequence of cochain complexes.
In either case, this allows us to construct the long exact sequence in cohomology (suppressing the `Rat' and `Alg'):
\begin{equation}
\label{Seq_star}
0\to H^1(G,L;A)\to\ldots\rightarrow \widetilde{H}^{n-1}(L;A)\rightarrow H^n(G,L;A)\rightarrow
H^n(G;A)\rightarrow \widetilde{H}^n(L;A)\to\ldots
\end{equation}
Note that $\widetilde{H}^0_{Rat}(L;A)=\widetilde{H}^0_{Alg}(L;A)=0$, hence, 
this exact sequence starts in degree one.

These long exact sequences can be connected, using the maps induced by the inclusions $C^n_{Alg}(G,L;A)\hookrightarrow C^n_{Rat}(G,L;A)$ and $C^n_{Alg}(G;A)\hookrightarrow C^n_{Rat}(G;A)$:
$$
\begin{CD}
\ldots @>>> H^n_{Alg}(G,L;A) @>>> H^n_{Alg}(G;A) @>>> \widetilde{H}^n_{Alg}(L;A) @>>> {H}^{n+1}_{Alg}(G,L;A) @>>> \ldots\\
@. @VVV       @VVV  @VVV   @VVV @.\\
\ldots @>>>H^n_{Rat}(G,L;A) @>>> H^n_{Rat}(G;A) @>>> \widetilde{H}^n_{Rat}(L;A) @>>> {H}^{n+1}_{Rat}(G,L;A) @>>> \ldots.
\end{CD}
$$

Since we identify $C^0_{Alg}(G;A)$ with algebraic maps from the trivial algebraic group to $A$ (and similarly in the other complexes), there is no distinction between rational and algebraic maps. Hence, 
$$H^0_{Rat}(G;A)=H^0_{Alg}(G;A)=H^0_{Rat}(G,L;A)=H^0_{Alg}(G,L;A)=A^G.$$ 

The cocycle condition on $f\in C^1_{Rat}(G;A)$ is precisely the condition considered in Lemma~\ref{rat_cocycle} for a rational map $f:G\rightarrow A$.
Since $G$ is connected, Lemma~\ref{rat_cocycle} tells us the map extends to an algebraic map. Hence, in this case 
$$H^1_{Rat}(G;A)=H^1_{Alg}(G;A)\  
\mbox{ and } \ 
H^1_{Rat}(G,L;A)=H^1_{Alg}(G,L;A).$$

This leads to the following proposition. The first part of it follows from the exact sequence. The second part has a similar proof as Proposition~\ref{cohom_compare}.
\begin{prop}(cf. Proposition~\ref{cohom_compare})
	\label{alg_cohom_compare}
	\begin{enumerate}
		\item If $\widetilde{H}_{Rat}^{1} (L;A)=0$,  then
		$H^1_{Rat}(G,L;A)=H^1_{Rat}(G;A)$.
		\item For $n>0$, if the natural map $Z^{n-1}_{Rat} (G;A)\rightarrow \widetilde{Z}^{n-1}_{Rat} (L;A)$ is surjective, then the natural map
		$H^n_{Rat}(G,L;A)\rightarrow H^n_{Rat}(G;A)$ is injective.
	\end{enumerate}
\end{prop}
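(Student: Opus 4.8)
The plan is to dispatch the two parts independently: part~(1) comes straight from the long exact sequence~(\ref{Seq_star}) in the \emph{Rat} category, while part~(2) is proved by running the argument of Proposition~\ref{cohom_compare}(2) through the cokernel complex $\widetilde{C}^\bullet_{Rat}(L;A)$. For part~(1) I would look at the bottom of~(\ref{Seq_star}). Since $H^0_{Rat}(G,L;A)=H^0_{Rat}(G;A)=A^G$ and $\widetilde{H}^0_{Rat}(L;A)=0$, the sequence begins
$$0\longrightarrow H^1_{Rat}(G,L;A)\longrightarrow H^1_{Rat}(G;A)\longrightarrow \widetilde{H}^1_{Rat}(L;A)\longrightarrow \cdots,$$
so exactness at $H^1_{Rat}(G,L;A)$ gives injectivity of $H^1_{Rat}(G,L;A)\to H^1_{Rat}(G;A)$, and exactness at $H^1_{Rat}(G;A)$ together with the hypothesis $\widetilde{H}^1_{Rat}(L;A)=0$ gives surjectivity; hence this map is an isomorphism.

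For part~(2), let $q^\bullet\colon C^\bullet_{Rat}(G;A)\to\widetilde{C}^\bullet_{Rat}(L;A)$ denote the canonical quotient map of complexes, so that $C^\bullet_{Rat}(G,L;A)=\ker q^\bullet$ by construction and $q^\bullet$ sends cocycles to cocycles, inducing the ``natural map'' $Z^{n-1}_{Rat}(G;A)\to\widetilde{Z}^{n-1}_{Rat}(L;A)$ of the statement; assume the latter is surjective. Pick $\mu\in Z^n_{Rat}(G,L;A)$ representing a class in the kernel of $H^n_{Rat}(G,L;A)\to H^n_{Rat}(G;A)$, so $\mu=d\eta$ for some $\eta\in C^{n-1}_{Rat}(G;A)$. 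Then $d(q^{n-1}\eta)=q^n(d\eta)=q^n(\mu)=0$ because $\mu\in\ker q^n$, so $q^{n-1}\eta\in\widetilde{Z}^{n-1}_{Rat}(L;A)$; by the hypothesis there is $\zeta\in Z^{n-1}_{Rat}(G;A)$ with $q^{n-1}\zeta=q^{n-1}\eta$. Now $\eta-\zeta\in\ker q^{n-1}=C^{n-1}_{Rat}(G,L;A)$ and $d(\eta-\zeta)=d\eta-d\zeta=\mu$, whence $[\mu]=0$ in $H^n_{Rat}(G,L;A)$, and the map $H^n_{Rat}(G,L;A)\to H^n_{Rat}(G;A)$ is injective.

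I do not expect a genuine obstacle: part~(2) is a formal diagram chase once the maps are in place. The points needing care are, first, the low-degree bookkeeping for $n=1$, where $C^0_{Rat}(G,L;A)=A^L$, $C^0_{Rat}(G;A)=A$ and $\widetilde{C}^0_{Rat}(L;A)=A/A^L$; here one checks that $\widetilde{Z}^0_{Rat}(L;A)=0$, so the hypothesis is automatic and the conclusion reduces to the injectivity already furnished by~(\ref{Seq_star}). Second, in contrast to Proposition~\ref{cohom_compare}, one may not restrict a rational cochain on $G$ to $L$ because of poles, so throughout one must work with the cokernel complex $\widetilde{C}^\bullet_{Rat}(L;A)$ and its cocycles in place of literal restriction. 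The same chase yields the analogous statement for the \emph{Alg} complexes if wanted.
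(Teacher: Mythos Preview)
Your proposal is correct and matches the paper's own proof, which states only that part~(1) follows from the exact sequence and part~(2) has a similar proof to Proposition~\ref{cohom_compare}. Your expanded version is in fact more careful than the paper's sketch, particularly in replacing literal restriction by the quotient map $q^\bullet$ to $\widetilde{C}^\bullet_{Rat}(L;A)$ and in handling the $n=1$ bookkeeping explicitly.
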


%If $(G,L;A)$ extends rationally up to rank 1, then we have that the exact sequence~(\ref{Seq_star})
%yields the following:
%$$0\rightarrow H^1_{Rat}(G,L;A)\rightarrow H^1_{Rat}(G;A)\rightarrow H^1_{Rat}(L;A)\rightarrow H^2_{Rat}(G,L;A)\rightarrow H^2_{Rat}(G;A)\rightarrow H^2_{Rat}(L;A).$$

%The proof of this is the same as for Proposition~\ref{cohom_compare}, with the observation that for first cohomology there is no difference between rational and algebraic cocycles.

The appropriate long exact sequence yields the following.

\begin{cor}
	 $H^2_{Rat}(G,L;A)=0$ if and only if $H^{1}_{Rat}(G;A)\rightarrow \widetilde{H}^{1}_{Rat}(L;A)$ is surjective and $H^{2}_{Rat}(G;A)\rightarrow \widetilde{H}^{2}_{Rat}(L;A)$ is injective.
\end{cor}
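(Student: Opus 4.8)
The plan is to read the statement straight off the long exact sequence (\ref{Seq_star}) in its `Rat' version, by exactly the diagram chase that proves the corollary following Proposition~\ref{cohom_compare} in the abstract setting. The relevant five-term segment of (\ref{Seq_star}) in degrees one and two is
$$
H^1_{Rat}(G;A)\xrightarrow{\ \varphi\ }\widetilde{H}^1_{Rat}(L;A)\xrightarrow{\ \partial\ }H^2_{Rat}(G,L;A)\xrightarrow{\ \psi\ }H^2_{Rat}(G;A)\xrightarrow{\ \chi\ }\widetilde{H}^2_{Rat}(L;A),
$$
in which $\varphi$ and $\chi$ are the restriction maps named in the statement (they are the maps appearing in (\ref{Seq_star}), by construction of the cokernel complex $\widetilde{C}^\bullet_{Rat}(L;A)$), $\partial$ is the connecting homomorphism, and $\psi$ is the natural map $H^2_{Rat}(G,L;A)\to H^2_{Rat}(G;A)$.

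For the forward direction, assume $H^2_{Rat}(G,L;A)=0$. Then $\partial=0$, so exactness at $\widetilde{H}^1_{Rat}(L;A)$ gives $\operatorname{im}\varphi=\ker\partial=\widetilde{H}^1_{Rat}(L;A)$, i.e.\ $\varphi$ is surjective; and $\psi$ has trivial domain, so exactness at $H^2_{Rat}(G;A)$ gives $\ker\chi=\operatorname{im}\psi=0$, i.e.\ $\chi$ is injective. Conversely, assume $\varphi$ surjective and $\chi$ injective. Surjectivity of $\varphi$ together with exactness at $\widetilde{H}^1_{Rat}(L;A)$ forces $\ker\partial=\widetilde{H}^1_{Rat}(L;A)$, hence $\partial=0$; exactness at $H^2_{Rat}(G,L;A)$ then gives $\ker\psi=\operatorname{im}\partial=0$, so $\psi$ is injective. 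On the other hand, injectivity of $\chi$ and exactness at $H^2_{Rat}(G;A)$ give $\operatorname{im}\psi=\ker\chi=0$, so $\psi=0$. A map that is both injective and zero has trivial domain, whence $H^2_{Rat}(G,L;A)=0$.

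This is pure homological algebra, so there is no real obstacle; the only points to have in place are that (\ref{Seq_star}) is a genuine long exact sequence — it is the cohomology sequence of the short exact sequence of cochain complexes $0\to C^\bullet_{Rat}(G,L;A)\to C^\bullet_{Rat}(G;A)\to\widetilde{C}^\bullet_{Rat}(L;A)\to 0$ constructed earlier — and that the maps $H^n_{Rat}(G;A)\to\widetilde{H}^n_{Rat}(L;A)$ occurring there are the restriction maps referred to in the statement, which is immediate from the definition of $\widetilde{C}^\bullet_{Rat}(L;A)$ as a cokernel complex. The identical argument works with `Alg' in place of `Rat' throughout.
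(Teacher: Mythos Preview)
Your proof is correct and is exactly the approach the paper indicates: the paper merely says ``The appropriate long exact sequence yields the following'' before stating the corollary, and your diagram chase on the five-term segment of (\ref{Seq_star}) is the standard way to make that sentence explicit.
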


When the action is trivial, we can learn more about what these cohomology groups are.

\begin{lemma}\label{zero}
	If $G$ acts trivially on $A$ and $\Hom(L,A)=0$, then $\widetilde{Z}^{1}_{Rat} (L;A)=0$.
\end{lemma}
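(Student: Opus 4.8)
The plan is to unwind $\widetilde Z^1_{Rat}(L;A)$ and reduce the statement to the triviality of rational homomorphisms $L\dashrightarrow A$. By definition $\widetilde C^\bullet_{Rat}(L;A)$ is the cokernel of the inclusion $C^\bullet_{Rat}(G,L;A)\hookrightarrow C^\bullet_{Rat}(G;A)$, so a class in $\widetilde Z^1_{Rat}(L;A)$ is represented by a rational map $f:G\dashrightarrow A$ defined at $1$ whose coboundary $df:G^2\dashrightarrow A$ is trivial on $L\times L$, and such a class vanishes precisely when $f$ itself is trivial on $L$, i.e.\ $f\in C^1_{Rat}(G,L;A)$. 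Hence it suffices to show that $df|_{L\times L}\equiv 0$ forces $f|_L\equiv 0$.

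Since $G$, and therefore $L$, acts trivially on $A$, the coboundary is $df(x,y)=f(x)-f(xy)+f(y)$, so $df|_{L\times L}\equiv 0$ says exactly that the restriction $f|_L:L\dashrightarrow A$ is a rational homomorphism (taking $x=y=1$ also gives $f(1)=0$). Thus the lemma is equivalent to: under $\Hom(L,A)=0$, every rational homomorphism $L\dashrightarrow A$ is trivial; and for that it is enough to extend $f|_L$ to an algebraic group homomorphism $L\to A$, since then it lies in $\Hom(L,A)=0$ and so is $0$, whence $f|_L\equiv 0$.

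To produce this extension I would treat the reduced and infinitesimal directions separately. The reduced part is Rosenlicht's rigidity: $L_{red}$ is a connected (smooth) algebraic group and $f|_{L_{red}}$ is a rational homomorphism, so by Lemma~\ref{rat_hom} it extends uniquely to $\phi\in\Hom(L_{red},A)$. For the nilpotent part one uses that the domain of a rational map defined at a point automatically contains every infinitesimal thickening of that point: since $\dom(f)$ is open in the reduced group $G$ and contains $1$ — more generally contains each point $g$ of $\dom(f)\cap L_{red}$ — the open subscheme $W:=\dom(f)\cap L$ of $L$ is dense and carries the full (possibly non-reduced) scheme structure of $L$ over $|W|$, and $f|_W:W\to A$ is an honest morphism. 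A standard translation--gluing argument (as in the proof of Lemma~\ref{rat_hom}, using that $A$ is separated) then spreads $f|_W$ out to a morphism $\bar f:L\to A$ which is a group homomorphism extending $f|_L$. Finally $\bar f\in\Hom(L,A)=0$, so $\bar f=0$ and $f|_L\equiv 0$; as the class was arbitrary, $\widetilde Z^1_{Rat}(L;A)=0$.

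The step I expect to be the main obstacle is the extension across the non-reduced locus: Lemma~\ref{rat_hom} is stated for reduced algebraic groups and does not apply to the subgroup scheme $L$ itself. The remedy is the split described above — Rosenlicht controls $L_{red}$, the inclusion $L\hookrightarrow G$ together with the openness of $\dom(f)$ shows that $f$ is genuinely regular on an open subscheme of $L$ that meets every infinitesimal neighbourhood over its support, and the group law lets one glue. If one prefers to avoid re-running Rosenlicht's argument, an alternative is to write $L=L_{red}\cdot L_r$ for a sufficiently large Frobenius kernel $L_r$ (which is infinitesimal, hence swallowed by $\dom(f)$), obtain honest homomorphisms out of $L_{red}$ and out of $L_r$ that agree on $L_{red}\cap L_r$, and amalgamate them into a homomorphism $L\to A$; either way one lands in $\Hom(L,A)=0$.
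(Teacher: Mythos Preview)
Your proposal is correct and follows exactly the same line as the paper: represent a class in $\widetilde Z^1_{Rat}(L;A)$ by $\mu\in C^1_{Rat}(G;A)$, observe that (with trivial action) $d\mu|_{L^2}\equiv 0$ means $\mu|_L$ is a rational homomorphism, upgrade this to an algebraic homomorphism using connectedness of $L$, and then invoke $\Hom(L,A)=0$. The paper's proof is three sentences and dispatches the rational-to-algebraic step with the parenthetical ``(since $L$ is connected, by assumption)'', whereas you devote most of your effort to justifying that step for a possibly non-reduced $L$ --- splitting into $L_{red}$ (handled by Lemma~\ref{rat_hom}) and an infinitesimal thickening (already inside $\dom(f)$). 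Your caution is reasonable since Lemma~\ref{rat_hom} is stated only for reduced groups, but note that in the paper's applications $L=G_{(1)}$ is infinitesimal, where rational and algebraic cochains coincide outright and the issue evaporates.
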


\begin{proof}
	Let $\mu + C^1_{Rat}(G,L;A)\in \widetilde{Z}^{1}_{Rat} (L;A)$, so $d\mu\in C^2_{Rat}(G,L;A)$. In particular, $d\mu\vert_{L^2}=0$. However, since the action is trivial, $d\mu\vert_{L^2}=0$ if and only if $\mu\vert_{L}$ is a rational homomorphism $L\to A$ if and only if $\mu\vert_L$ is a homomorphism $L\to A$ (since $L$ is connected, by assumption). Since $\Hom(L,A)=0$, we conclude that $\mu + C^1_{Rat}(G,L;A)=0+C^1_{Rat}(G,L;A)$. Hence, $\widetilde{Z}^{1}_{Rat} (L;A)=0$.
\end{proof}

\begin{lemma}\label{H1}
  Let $G$ be a connected algebraic group which acts trivially on a commutative algebraic group $A$. Let $L\leq G$ be a closed connected subgroup scheme. Then $H^1_{Rat}(G;A)=\Hom(G,A)$ and
  $H^1_{Rat}(G,L;A)=\{\mu\in \Hom(G,A)\,\vert\,\mu\vert_{L}\equiv 0\}$. 
\end{lemma}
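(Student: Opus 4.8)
The plan is to compute both cohomology groups directly from the definition of the complex $C^\bullet_{Rat}$, using the fact that over a connected algebraic group a rational $1$-cocycle for the trivial action is exactly a rational homomorphism, which by Lemma~\ref{rat_hom} is automatically algebraic and defined everywhere. First I would unpack $Z^1_{Rat}(G;A)$: since the $G$-action on $A$ is trivial, the cocycle condition $d\mu\equiv 0$ reads $\mu(xy)=\mu(x)+\mu(y)$ on $\dom(\mu)$, so $\mu$ is a rational homomorphism $G\to A$. By Lemma~\ref{rat_hom} (or Lemma~\ref{rat_cocycle} with trivial action, as the excerpt notes right before Proposition~\ref{alg_cohom_compare}) every such $\mu$ extends uniquely to an everywhere-defined algebraic homomorphism, so $Z^1_{Rat}(G;A)=\Hom(G,A)$. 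Next I would check that $B^1_{Rat}(G;A)=0$: a $1$-coboundary is $da$ for $a\in A^G=A$ (degree-zero cochains are just elements of $A$ here, since $C^0$ consists of maps from the trivial group), and $da(x)=\,^xa-a=0$ under the trivial action. Hence $H^1_{Rat}(G;A)=\Hom(G,A)$, which is the first claim.

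For the second claim I would argue the same way inside the subcomplex $C^\bullet_{Rat}(G,L;A)$. A class in $Z^1_{Rat}(G,L;A)$ is represented by a rational $1$-cocycle $\mu:G\to A$ (hence, as above, an algebraic homomorphism $G\to A$) which additionally satisfies $\mu|_{L^1}\equiv 0$, i.e. $\mu|_L\equiv 0$ — this is exactly the defining condition $\mu\in\widetilde{C}^1_{Rat}(G,L;A)$. So $Z^1_{Rat}(G,L;A)=\{\mu\in\Hom(G,A):\mu|_L\equiv 0\}$. The coboundaries again vanish: $C^0_{Rat}(G,L;A)=A^L=A$ (trivial action), and $da=0$ as before, so $B^1_{Rat}(G,L;A)=0$ and $H^1_{Rat}(G,L;A)=\{\mu\in\Hom(G,A):\mu|_L\equiv 0\}$, as asserted.

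The only subtlety — and the one step I would be careful about — is making sure the reduction "rational $1$-cocycle $\Rightarrow$ everywhere-defined algebraic homomorphism" is legitimately available here. It is: $G$ is connected by hypothesis, and the cocycle condition for a rational map $G\to A$ with trivial $G$-action is precisely the hypothesis of Lemma~\ref{rat_cocycle} (equivalently Lemma~\ref{rat_hom}), which the excerpt has already invoked to conclude $H^1_{Rat}(G;A)=H^1_{Alg}(G;A)$ and $H^1_{Rat}(G,L;A)=H^1_{Alg}(G,L;A)$. So no connectedness hypothesis on $L$ is actually needed for this particular computation; it is harmless that the statement includes it. One should also note the restriction map $\mu\mapsto\mu|_L$ makes sense: $L$ is a closed subgroup scheme of $G$, and "$\mu|_L\equiv 0$" means vanishing on the scheme $L$, which for a homomorphism into the reduced group $A$ is automatic once $\mu$ kills $L_{red}$ — but since we only ever need the condition as stated, I would simply record $H^1_{Rat}(G,L;A)=\{\mu\in\Hom(G,A)\mid\mu|_L\equiv 0\}$ and stop. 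The proof is short; the real content is the two identifications $Z^1=\Hom(G,A)$ (via Rosenlicht) and $B^1=0$ (via triviality of the action).
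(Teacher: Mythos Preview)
Your proposal is correct and follows essentially the same approach as the paper: observe that the trivial action makes the coboundary map $C^0\to C^1$ zero, so $H^1=Z^1$, and that rational $1$-cocycles for the trivial action are rational homomorphisms, hence algebraic homomorphisms by Lemma~\ref{rat_hom}/\ref{rat_cocycle}; then repeat for the relative complex. Your write-up is somewhat more explicit than the paper's (which dispatches the relative case with ``essentially the same argument''), and your side remark that connectedness of $L$ is not actually used is a fair observation.
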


\begin{proof}	
	Since the $G$-action on $A$ is trivial, the coboundary map $C^0_{Rat}(G;A)\rightarrow C^1_{Rat}(G;A)$ is just the trivial map. Hence, we get that $H^1_{Rat}(G;A)=Z_{Rat}^1(G;A)$,  the rational 1-cocycles of $G$. However, as the action is trivial, rational 1-cocycles of $G$ on $A$ are the same as homomorphisms of algebraic groups $G\rightarrow A$.
	Hence,  $H^1_{Rat}(G;A)=\Hom(G,A)$. 
	
	Essentially the same argument gives $H^1_{Rat}(G,L;A)=\{\mu\in \Hom(G,A)\,\vert\,\mu\vert_{L}\equiv 0\}$.
\end{proof}

Combining Lemma~\ref{H1} with Lemma~\ref{zero} and Proposition~\ref{alg_cohom_compare}(2), we get the following corollary.

\begin{cor}\label{H1+H2}
	Let $G$ be a connected algebraic group acting algebraically (not necessarily trivially) by automorphisms on a commutative algebraic group $A$. Let $L\leq G$ be a connected closed subgroup scheme of $G$ such that the action of $L$ on $A$ is trivial, and $\Hom(L,A)=0$. Then $H^1_{Rat}(G,L;A)= H^1_{Alg}(G;A)$ and $H^2_{Rat}(G,L;A)\rightarrow H^2_{Rat}(G;A)$ is injective.
\end{cor}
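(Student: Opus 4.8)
The plan is to combine the three cited ingredients in the obvious way, but I should be careful about which complex (Rat versus Alg) each statement lives in. First I would establish the statement about $H^1$. By Lemma~\ref{zero}, the hypotheses ($G$ acts trivially on $A$ after restriction to $L$, and $\Hom(L,A)=0$) give $\widetilde{Z}^1_{Rat}(L;A)=0$, and a fortiori $\widetilde{H}^1_{Rat}(L;A)=0$. Proposition~\ref{alg_cohom_compare}(1) then yields $H^1_{Rat}(G,L;A)=H^1_{Rat}(G;A)$. Now I would invoke the identification $H^1_{Rat}(G;A)=H^1_{Alg}(G;A)$ recorded in Section~\ref{s3.2} (a consequence of Lemma~\ref{rat_cocycle}, since $G$ is connected): a rational $1$-cocycle is precisely a rational map satisfying the hypothesis of that lemma, hence extends algebraically. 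Stringing these together gives $H^1_{Rat}(G,L;A)=H^1_{Alg}(G;A)$, which is the first claim.

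For the second claim I would apply Proposition~\ref{alg_cohom_compare}(2) with $n=2$: it suffices to check that the natural map $Z^1_{Rat}(G;A)\to\widetilde{Z}^1_{Rat}(L;A)$ is surjective. But by Lemma~\ref{zero} the target is the zero group, so surjectivity is automatic, and Proposition~\ref{alg_cohom_compare}(2) delivers the injectivity of $H^2_{Rat}(G,L;A)\to H^2_{Rat}(G;A)$.

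The one point that needs a moment's care — and the only place I expect any friction — is the passage from $H^1_{Rat}$ to $H^1_{Alg}$. One must note that the hypothesis ``$L$ acts trivially on $A$'' is exactly what is needed for Lemma~\ref{zero} to apply even though the ambient $G$-action on $A$ is allowed to be nontrivial: the vanishing $d\mu|_{L^2}=0$ only forces $\mu|_L$ to be a genuine homomorphism $L\to A$ when the $L$-action is trivial, and then $\Hom(L,A)=0$ finishes it. With the $G$-action possibly nontrivial, the identification $H^1_{Rat}(G;A)=H^1_{Alg}(G;A)$ must be read off from Lemma~\ref{rat_cocycle} rather than Lemma~\ref{rat_hom}; since Lemma~\ref{rat_cocycle} is stated precisely for a commutative $A$ with an algebraic automorphic $G$-action, this is exactly our situation and no extra work is needed. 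Everything else is a direct citation, so the proof is short.
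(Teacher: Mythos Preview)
Your proposal is correct and follows essentially the same route as the paper: use Lemma~\ref{zero} to kill $\widetilde{Z}^1_{Rat}(L;A)$, then feed this into Proposition~\ref{alg_cohom_compare} (parts (1) and (2)) together with the $H^1_{Rat}=H^1_{Alg}$ identification from Section~\ref{s3.2}. The paper's one-line sketch cites Lemma~\ref{H1} rather than Proposition~\ref{alg_cohom_compare}(1) for the $H^1$ statement; since Lemma~\ref{H1} is stated only for trivial $G$-action while the corollary explicitly allows a nontrivial one, your choice to go through Proposition~\ref{alg_cohom_compare}(1) and Lemma~\ref{rat_cocycle} is the cleaner citation, and your remark that the proof of Lemma~\ref{zero} only needs the $L$-action to be trivial is exactly the point that makes the whole argument go through.
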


The following lemma by van der Kallen \cite[Prop. 2.2]{vdKa} 
is useful in what follows.

\begin{lemma}\label{perfect}
	Let $G$ be a semisimple, simply-connected algebraic group. 
%over an algebraically closed field $\bK$ of characteristic $p$. 
Suppose further that, if $p=2$, the Lie algebra $\fg$ of $G$ does not contain $A_1, B_2$ or $C_l$ ($l\geq3$) as a direct summand. Then $\fg$ is perfect, i.e., $\fg=[\fg,\fg]$.
\end{lemma}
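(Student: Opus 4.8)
The statement to prove is Lemma \ref{perfect}, which is attributed to van der Kallen \cite[Prop. 2.2]{vdKa}, so the natural plan is to reconstruct that argument rather than invent something new. The claim is that for $G$ semisimple and simply-connected, the Lie algebra $\fg$ is perfect unless $p=2$ and $\fg$ has a summand of type $A_1$, $B_2$, or $C_l$ ($l\geq 3$). First I would reduce to the simple (indecomposable) case: $\fg$ decomposes as a direct sum of the Lie algebras of the simple factors of $G$ (using simple-connectedness so that $G$ is a direct product of its simple factors), and $[\fg,\fg]$ respects this decomposition, so it suffices to show each simple summand not on the forbidden list is perfect.

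So fix $G$ simple, simply-connected, with root system $\Phi$, a Chevalley basis $\{e_\alpha\}_{\alpha\in\Phi}\cup\{h_i\}$ of $\fg$. The plan is to show $\fg=[\fg,\fg]$ by checking that every basis element lies in $[\fg,\fg]$. For the root vectors this is immediate regardless of $p$: if $\alpha,\beta,\alpha+\beta\in\Phi$ then $[e_\alpha,e_\beta]=\pm(r+1)e_{\alpha+\beta}$ with $r+1$ a structure constant that is a unit for the relevant small primes except in the double-bond cases — actually the clean statement is $[e_\alpha,e_{-\alpha}]=h_\alpha$, and more usefully $[h_\alpha,e_\alpha]=2e_\alpha$ handles $e_\alpha$ whenever $p\neq 2$, while in general one uses that for any long root $\alpha$ one still has an $\sl_2$-triple with the desired bracket relations. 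The real content is the toral part: I must show each $h_i\in[\fg,\fg]$, equivalently that the map $\fg\otimes\fg\to\fg$ has image containing the Cartan $\fh$. Since $[e_\alpha,e_{-\alpha}]=h_\alpha$, the span of $[\fg,\fg]\cap\fh$ contains all $h_\alpha$, so perfectness is equivalent to: the coroots $\{h_\alpha=\alpha^\vee\}$ span $\fh$ over $\bK$, i.e., the image of the coroot lattice $Q^\vee$ in $\fh=X_*(T)\otimes\bK$ is everything. Because $G$ is simply-connected, $X_*(T)=Q^\vee$, so $\fh$ is spanned by the $h_i$ and each $h_i=h_{\alpha_i}$ is already a coroot — so in characteristic zero we are done trivially, and the only obstruction is that the bracket relations producing $h_\alpha$ might degenerate mod $p$.

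The heart of the matter, then, is tracking exactly when the $\sl_2$-subalgebra relations collapse. For each simple root $\alpha_i$, $[e_{\alpha_i},e_{-\alpha_i}]=h_{\alpha_i}$ holds over $\bZ$, hence over any $\bK$, so actually every $h_i$ is literally a bracket — so where can perfectness fail at all? The failure must come through the decomposition hypothesis: it fails precisely when a \emph{summand} is isomorphic (as a Lie algebra, after the bad-prime identifications) to something smaller. Concretely, in type $C_l$ at $p=2$ there is an ideal (the short-root vectors together with part of the Cartan) and the quotient identifies with type $D_l$-ish data; for $A_1$ at $p=2$, $\sl_2$ has $[\sl_2,\sl_2]$ spanned by $e,f$ only since $[e,f]=h$ but $[h,e]=2e=0$, $[h,f]=0$ — wait, $[e,f]=h$ still, so $h\in[\sl_2,\sl_2]$; the issue is that $\fg=\sl_2$ is actually \emph{not} the simply-connected form's Lie algebra in the relevant normalization, or rather $\mathfrak{pgl}_2$ vs $\sl_2$. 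I would therefore go case-by-case through the bad primes ($p=2$: types with a short root, i.e., $B,C,F_4$; $p=3$: $G_2$; and check $p=2$ on $B_2=C_2$), in each case exhibiting either an explicit expression of every Cartan generator as a sum of brackets (using that $X_*(T)=Q^\vee$ for the simply-connected group, which is the only place simple-connectedness enters) or, for the excluded types, identifying the proper ideal $[\fg,\fg]\subsetneq\fg$.

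**Main obstacle.** The hard part is the bad-prime bookkeeping: verifying that for $B_l$ ($l\geq 3$), $F_4$, and $G_2$ at their respective bad primes the Lie algebra is \emph{still} perfect (so they are correctly \emph{not} on the exclusion list), while $A_1$, $B_2=C_2$, $C_l$ ($l\geq 3$) at $p=2$ genuinely fail. This requires either invoking the known structure of the radical/ideals of classical Lie algebras in small characteristic, or a direct Chevalley-basis computation checking which $h_i$ arise as brackets of short-root vectors versus long-root vectors — since $[e_\alpha,e_{-\alpha}]=\alpha^\vee$ and for a short root $\alpha$ the coroot $\alpha^\vee$ is a long coroot, the span of short coroots may or may not exhaust $Q^\vee\otimes\bK$ depending on the index $[Q^\vee : \text{span of long coroots}]$ and whether $p$ divides it. I would resolve this by reducing to the statement: $\fg$ is perfect iff $p\nmid [\![$ a certain lattice index $]\!]$, then tabulate those indices against $\Phi$, which reproduces exactly the exclusion list. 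Since the lemma is quoted from the literature, for the paper it suffices to cite \cite{vdKa} and only sketch this reduction; I would not grind through the full table.

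Given the above, the cleanest write-up is simply: \emph{This is \cite[Prop. 2.2]{vdKa}.}
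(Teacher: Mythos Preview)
Your proposal contains a genuine confusion that you half-catch but never resolve. You initially claim that the root vectors are ``immediate regardless of $p$'' and that ``the real content is the toral part,'' but then you correctly observe that $h_{\alpha_i}=[e_{\alpha_i},e_{-\alpha_i}]$ holds over $\bZ$, so every $h_i$ \emph{is} literally a bracket in any characteristic. At that point you ask ``so where can perfectness fail at all?'' and never give the right answer. The failure is in the \emph{root vectors}, not the Cartan: for $\sl_2$ at $p=2$ one has $[h,e]=2e=0$ and $[h,f]=0$, so $[\sl_2,\sl_2]=\bK h$ and $e,f\notin[\sl_2,\sl_2]$. More generally, $e_\alpha\in[\fg,\fg]$ via $[h_j,e_\alpha]=\langle\alpha,\alpha_j^\vee\rangle e_\alpha$ iff some Cartan integer $\langle\alpha,\alpha_j^\vee\rangle$ is a unit mod $p$; since $2=\langle\alpha,\alpha^\vee\rangle=\sum m_j\langle\alpha,\alpha_j^\vee\rangle$ with the $m_j$ integers, this can only fail when $p=2$. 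At $p=2$ one must also check the brackets $[e_\beta,e_\gamma]$ with $\beta+\gamma=\alpha$, and it is exactly the long roots in $C_l$ (and the unique root in $A_1$, and the highest root in $B_2=C_2$) for which every such structure constant is even. Your speculation about ``summands being isomorphic to something smaller'' and about $\sl_2$ versus $\mathfrak{pgl}_2$ is off the mark: $SL_2$ \emph{is} simply connected and its Lie algebra \emph{is} $\sl_2$.

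The paper takes a different and cleaner route. Rather than a direct Chevalley-basis computation, it first reduces to $G$ simple and then observes that if $\fg$ is \emph{simple} and non-abelian it is automatically perfect (since $[\fg,\fg]$ is a nonzero ideal). It then tabulates the pairs $(\Phi,p)$ for which $\fg$ is simple, citing standard results and \cite{CKR} for some borderline cases ($E_6,G_2$ at $p=2$; $E_7,F_4$ at $p=3$; $E_8$ at $p=2,3,5$). For the remaining pairs where $\fg$ is \emph{not} simple, it invokes Hogeweij's tables \cite{Hog} to read off that $\fg=[\fg,\fg]$ nonetheless, except precisely for $A_1,B_2,C_l$ at $p=2$. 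This is more than ``just cite \cite{vdKa}'': it is an actual argument with precise references for each step. Your fallback suggestion to simply cite van der Kallen is not wrong, but the paper's proof is more informative and does not rely on the reader tracking down a single 1973 lecture-notes volume.
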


\begin{proof}
	It is enough to prove this result for $G$ simple and simply-connected, with irreducible root system $\Phi$. It is well known that $\fg$ is simple and non-abelian (and so $\fg=[\fg,\fg]$) in the following cases: $p\nmid l+1$ in type $A_l$, $p\neq 2$ in types $B_l,C_l,D_l$, $p\neq 2,3$ in types $E_6,E_7,F_4,G_2$, and $p\neq 2,3,5$ in type $E_8$. It is further known \cite{CKR} that $\fg$ is simple and non-abelian in the following cases: $p=2$ in types $E_6,G_2$, $p=3$ in types $E_7,F_4$, and $p= 2,3,5$ in type $E_8$.
	
	Furthermore, it is known from Table 1 in \cite{Hog} that $\fg=[\fg,\fg]$ in all the remaining cases except for $p=2$ in types $A_1,B_2,C_l$ ($l\geq3$).  
\end{proof}

\begin{lemma}\label{H2}
  Let $G$ be a semisimple, simply-connected algebraic group over an algebraically closed field $\bK$ of characteristic $p$
  which acts trivially on a commutative algebraic group $A$. Suppose further that, if $p=2$, the Lie algebra $\fg$ of $G$ does not contain $A_1, B_2$ or $C_l$ ($l\geq3$) as a direct summand.
  Let $G\1$ be the first Frobenius kernel of $G$. Then $H^2_{Rat}(G,G\1;A)=0$.
\end{lemma}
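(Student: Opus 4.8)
The plan is to use the long exact sequence (\ref{Seq_star}) for the pair $(G, G\1)$ together with the vanishing and comparison results already established. Specifically, applying Corollary~\ref{H1+H2} with $L = G\1$ will be the main engine: I first need to check its hypotheses, namely that $G\1$ is a connected closed subgroup scheme of $G$ with $G$ acting trivially on $A$ (given), and crucially that $\Hom(G\1, A) = 0$. The Frobenius kernel $G\1$ is infinitesimal (its coordinate ring is local with nilpotent maximal ideal), so any homomorphism of algebraic groups $G\1 \to A$ into the \emph{reduced} group $A$ must be trivial; hence $\Hom(G\1, A) = 0$ automatically. This is the easy part.

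Next, I would unwind what Corollary~\ref{H1+H2} gives: $H^1_{Rat}(G, G\1; A) = H^1_{Alg}(G; A)$ and the map $H^2_{Rat}(G, G\1; A) \to H^2_{Rat}(G; A)$ is injective. So it suffices to show that this injective map has image $0$, equivalently that every class in $H^2_{Rat}(G, G\1; A)$ dies in $H^2_{Rat}(G; A)$. Looking at the segment of the long exact sequence
$$
H^1_{Rat}(G; A) \longrightarrow \widetilde{H}^1_{Rat}(G\1; A) \longrightarrow H^2_{Rat}(G, G\1; A) \longrightarrow H^2_{Rat}(G; A),
$$
injectivity of the last map forces $H^2_{Rat}(G, G\1; A)$ to be a quotient of $\widetilde{H}^1_{Rat}(G\1; A)$, namely the cokernel of $H^1_{Rat}(G;A) \to \widetilde{H}^1_{Rat}(G\1;A)$. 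Now by Lemma~\ref{perfect}, under the stated hypothesis on $p$ and the root system, $\fg$ is perfect; and since $G\1$ is infinitesimal with $\fg = \Lie(G\1)$, homomorphisms $G\1 \to A$ correspond to Lie algebra maps $\fg \to A$ (with trivial bracket on $A$), which vanish because $\fg = [\fg,\fg]$. Combined with Lemma~\ref{zero} (whose hypothesis $\Hom(G\1, A) = 0$ we have just verified), this gives $\widetilde{Z}^1_{Rat}(G\1; A) = 0$, hence $\widetilde{H}^1_{Rat}(G\1; A) = 0$. Therefore the quotient $H^2_{Rat}(G, G\1; A) = 0$.

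Let me reorganize this into the cleanest logical order: (i) verify $\Hom(G\1, A) = 0$ using that $G\1$ is infinitesimal and $A$ is reduced, noting we need $\fg$ perfect (Lemma~\ref{perfect}) only if we want to argue via the Lie algebra, but actually infinitesimality alone suffices since $A$ is reduced — so in fact $\Hom(G\1,A)=0$ holds even without the perfectness hypothesis; (ii) apply Lemma~\ref{zero} to conclude $\widetilde{Z}^1_{Rat}(G\1; A) = 0$, hence $\widetilde{H}^1_{Rat}(G\1; A) = 0$; (iii) feed this into the long exact sequence (\ref{Seq_star}) together with $H^0_{Rat}(G;A) = A^G$ to get that $H^2_{Rat}(G, G\1; A)$ injects into $H^2_{Rat}(G;A)$ with image contained in the kernel of $H^2_{Rat}(G;A) \to \widetilde{H}^2_{Rat}(G\1;A)$, but more directly that the term $\widetilde{H}^1_{Rat}(G\1;A) = 0$ sitting immediately before $H^2_{Rat}(G,G\1;A)$ forces exactness to make $H^2_{Rat}(G,G\1;A) \hookrightarrow H^2_{Rat}(G;A)$.

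The subtle point — and the step I expect to require the most care — is that the perfectness hypothesis from Lemma~\ref{perfect} actually enters. If $\Hom(G\1, A) = 0$ follows purely from infinitesimality of $G\1$ and reducedness of $A$, then Lemma~\ref{perfect} would be unnecessary here, which suggests either that the intended argument does \emph{not} route through $A$ being reduced, or that one wants $\Hom(\fg, A) = 0$ in a context where $A$ need not be assumed reduced (the hypothesis of Lemma~\ref{H2} says "commutative algebraic group", and algebraic groups in this paper are reduced by the convention at the start of Section~\ref{s3}, so reducedness does hold). Given the paper's conventions, I would lean on: $A$ is reduced, $G\1$ is infinitesimal, so $\Hom(G\1, A) = 0$ trivially; then Lemma~\ref{perfect} is invoked to handle $\Hom(G\1, A)$ more robustly or is simply the hypothesis under which the cleaner statement holds — I would state the proof as "By Lemma~\ref{perfect}, $\fg$ is perfect. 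Since $G\1$ is infinitesimal, $\Hom(G\1, A) = 0$ (a homomorphism $G\1 \to A$ factors through a map $\fg \to A$ of abelian Lie algebras, which is zero as $\fg = [\fg,\fg]$). Now Lemma~\ref{zero} gives $\widetilde{Z}^1_{Rat}(G\1; A) = 0$, so $\widetilde{H}^1_{Rat}(G\1; A) = 0$, and the long exact sequence (\ref{Seq_star}) together with $H^0_{Rat}(G;A) \to \widetilde{H}^0_{Rat}(G\1;A)$ being onto (both are $A$-related and $\widetilde{H}^0 = 0$) forces $H^2_{Rat}(G, G\1; A) = 0$." Verifying that the map out of $H^1_{Rat}(G;A)$ is surjective onto $\widetilde H^1_{Rat}(G\1;A) = 0$ is vacuous, so exactness immediately yields the vanishing — that is the real content and it is short once the inputs are in place.
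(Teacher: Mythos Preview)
Your argument has a genuine gap. From $\widetilde{H}^1_{Rat}(G\1;A)=0$ and the long exact sequence, exactness at $H^2_{Rat}(G,G\1;A)$ says only that the kernel of $H^2_{Rat}(G,G\1;A)\to H^2_{Rat}(G;A)$ equals the image of the zero group, i.e., the map is \emph{injective}. It does not say $H^2_{Rat}(G,G\1;A)=0$. Your sentence ``exactness immediately yields the vanishing'' conflates injectivity with vanishing. To finish you must prove $H^2_{Rat}(G;A)=0$, and this is the substantial part of the lemma, which you have omitted entirely. The paper handles it by (a) using Weil's group-chunk theorem to show every rational $2$-cocycle arises from an algebraic central extension, reducing to $H^2_{Alg}(G;A)$; (b) reducing via long exact sequences to $A\in\{\bG_a,\bG_m,\text{finite}\}$; (c) citing $H^2(G;\bK_{triv})=0$ for $\bG_a$, and for $\bG_m$ or finite $A$ observing that a nonsplit central extension would produce a genuinely projective irreducible representation of $G$, contradicting Steinberg's tensor product theorem.

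A secondary issue: your claim that reducedness of $A$ alone forces $\Hom(G\1,A)=0$ is incorrect. Take $A=\bG_a$: homomorphisms $G\1\to\bG_a$ correspond to restricted Lie algebra maps $\fg\to\bK$, i.e., linear functionals vanishing on $[\fg,\fg]$ and on $p$-th powers, and these need not be zero unless $\fg$ is perfect. So Lemma~\ref{perfect} is genuinely required, exactly as the paper uses it.
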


\begin{proof}
  Let us first show that  $H^2_{Rat}(G;A)=0$.
%  Observe that the natural map $H^2_{Alg}(G;A) \rightarrow H^2_{Rat}(G;A)$ is bijective.
% \textbf{ It is obviously injective.
%  It is surjective because
%a non-trivial rational cohomology class defines a non-trivial central extension of $G$
%by $A$ with a rational multiplication. But rational multiplication is necessarily algebraic,
%so we get an algebraic class. }
Let $\mu:G\times G\to A$ be a rational cocycle defined on the open set $U\times U$ with $U^{-1}=U$. We can define a local group structure on the set $A\times G$ by setting 
$$(a,g)(b,h)=(a+b+\mu(g,h),gh)
\ \mbox{ and } \ 
(a,g)^{-1}=(-a-\mu(g,g^{-1}),g^{-1}).$$ 
In the language of Weil \cite{Wei}, $A\times U$ is a group-chunk in the pre-group $A\times G$. 
By Weil's theorem \cite{Wei}, there exists an algebraic group $H$ birationally equivalent to $A\times U$ with $\Phi:A\times U\to \Phi(A\times U)$ 
%a biregular equivalence 
an isomorphism of algebraic group-chunks  
and $\Phi(A\times U)$ a dense open set in $H$.

 Since $H$ is connected it is generated by $\Phi(A\times U)$. Let $f:A\to H$ be the natural algebraic group homomorphism coming from $A\to A\times U$. This is clearly injective and, since $A$ commutes with each element of $A\times U$, $f(A)\subset Z(H)$. Furthermore, the natural projection $A\times U\to G$ extends to a rational (and so algebraic) homomorphism $\pi:H\to G$, which is surjective as $U$ generates $G$ (since $G$ connected). Finally, it is clear that $f(A)=\ker\pi\cap \Phi(A\times U)$. Hence, $\pi$ descends to a homomorphism $\bar{\pi}:H/f(A)\to G$, whose kernel is discrete (since $\Phi(A\times U)$ is dense in $H$) and, hence, central (as $G$ connected).
 
 In other words, we have a central extension $1\to A\to H\to G\to 1$ of algebraic groups, which corresponds to an algebraic cocycle $\widetilde{\mu}:G\times G\to A$. It is straightforward to see that $\widetilde{\mu}\vert_{U\times U}=\mu\vert_{U\times U}$, and hence $[\mu]$ lies in the image of the natural map $H^2_{Alg}(G;A) \rightarrow H^2_{Rat}(G;A)$. Therefore, the map $H^2_{Alg}(G;A) \rightarrow H^2_{Rat}(G;A)$ is surjective. 

  It suffices to prove that $H^2_{Alg}(G;A)=0$ when $A$ is $\bG_a$ or $\bG_m$ or a finite group:
  the long exact sequence in cohomology reduces the case of arbitrary $A$ to one of these cases.
  It is known that   $H^2_{Alg}(G;\bG_a)=H^2(G;\bK_{triv})=0$ \cite[II.4.11]{Jan}.

  Consider a non-trivial cohomology class in $H^2_{Alg}(G;A)$ when  $A$ is $\bG_m$ or a non-trivial finite group.
  It yields a non-split central extension $1\rightarrow A \rightarrow \widetilde{G} \rightarrow G \rightarrow 1$.
  Pick a non-trivial character $\chi: A \rightarrow \bG_m$. There exists an irreducible representation of $\widetilde{G}$
  with a central character $\chi$. It is an irreducible projective representation of $G$.
  By the original version of
  Steinberg's tensor product theorem \cite{Ste}
  it is linear. Hence, $\chi$ is trivial. This contradiction proves that $H^2_{Alg}(G;A)=0$ for these two particular $A$.
  We have finished the proof that $H^2_{Rat}(G;A)=0$ for an arbitrary $A$.

	%Now pick $\mu \in Z^2_{Rat}(G,G_{(1)};A)$.
	%We have just shown that $\mu = d \eta$ for some $\eta\in C^1_{Rat}(G;A)$.
	%Moreover, $d (\eta|_{G\1})=\mu|_{G\1}\equiv 1$ so that $\eta|_{G\1}$ is a rational homomorphism
	%of group schemes $G\1\rightarrow A$.
	Since $G\1$ is a height 1 group scheme, rational homomorphisms of schemes $G\1\rightarrow A$ are fully controlled by the corresponding restricted homomorphisms
	of Lie algebras $\fg\rightarrow \Lie(A)$.
	By Lemma \ref{perfect},
	$\fg=[\fg,\fg]$ and thus all such homomorphism of Lie algebras are trivial. Hence, 
	we can apply Corollary~\ref{H1+H2} to get that 
	$H^2_{Rat}(G,G\1;A)\rightarrow H^2_{Rat}(G;A)$
	is injective, and so $H^2_{Rat}(G,G\1;A)=0$.
	%$\eta|_{G\1}$ is trivial as well so that 
	%$\eta \in C^1_{Rat}(G,G_{(1)};A)$. This proves that
	%$H^2_{Rat}(G,G_{(1)};A)=1$ 
	%
	%Now pick $\mu \in Z^2_{Rat}(G,G_{(1)};A)$.
	%We have just shown that $\mu = d \eta$ for some $\eta\in C^1_{Rat}(G;A)$.
	%Moreover, $d (\eta|_{G\1})=\mu|_{G\1}\equiv 1$ so that $\eta|_{G\1}$ is a rational homomorphism
	%of group schemes $G\1\rightarrow A$.
	%Since $G\1$ is a height 1 group scheme, this homomorphism is fully controlled by the corresponding homomorphism
	%of Lie algebras $\fg\rightarrow \Lie(A)$.
	%Since $G$ is simply-connected,
	%$\fg=[\fg,\fg]$ and the homomorphism of Lie algebras is trivial. Hence, 
	%$\eta|_{G\1}$ is trivial as well so that 
	%$\eta \in C^1_{Rat}(G,G_{(1)};A)$. This proves that
	%$H^2_{Rat}(G,G_{(1)};A)=1$ 
	\end{proof}

\subsection{$G$-Stable bricks}
\label{s3.3}
In Chapter~\ref{s2}, we have introduced the notions of weak
$(L,H)$-morphs and $(L,H)$-morphs for abstract groups. In this section, we discuss how these notions apply to algebraic groups and see how they can be used to shed some light on the lifting of $\fg$-modules to $G$-modules.

Suppose that $G,K$ are algebraic groups over $\bK$, where $G$ is connected, and that $L,H$ are closed subgroup schemes of $G,K$ respectively. We say that a rational map $f:G\rightarrow K$ is  {\em a (weak) $(L,H)$-morph of algebraic groups}
if it satisfies the conditions for a (weak) $(L,H)$-morph of abstract groups, where condition (3) is interpreted for only those $x,y,xy\in \dom(f)$.

In analogy with the case of abstract groups, a weak $(L,H)$-morph of algebraic groups is a homomorphism $G\rightarrow N/H$ with a rational lifting $N/H\rightarrow N$ which satisfies an additional condition. It is clear that if $H$ is normal in $K$ then condition (2) is trivially satisfied. We again have that weak $(L,1)$-morphs are just homomorphisms $G\rightarrow K$, and that weak $(1,K)$-morphs are rational maps $G\rightarrow K$ which preserve the identity.

We say that two weak $(L,H)$-morphs of algebraic groups, $f$ and $g$, are equivalent if $f(x)g(x)^{-1}\in H$ for all $x\in \dom(f)\cap\dom(g)$. Given a homomorphism of algebraic groups $\theta:L\rightarrow K$, we denote by $\LH(G,K)$ the quotient by this equivalence relation of the set of weak $(L,H)$-morphs of algebraic groups from $G$ to $K$ which restrict to $\theta$ on $L$.

Suppose that $X$ is a separated algebraic scheme on which $G$ acts rationally on the right (i.e. the action $X\times G\rightarrow X$ is a rational map), $K$ acts algebraically on the left, and the actions commute. Suppose further that $\theta\in X(\bK)$ is such that $\theta^G\subset \,^{K}\theta$, and that there exists a rational section $K/H\rightarrow K$ where $H=\Sta_K(\theta)$ is the scheme-theoretic stabiliser of $\theta$. %We also assume that $H$ is reduced (i.e. $\dim{H}=\dim{Lie(H)}$).

As in the case for abstract groups, this gives us a rational map
$$
f:G\rightarrow K \ \ \mbox{ characterised by } \ \ 
\,^{f(x)}\theta = \theta^{x}
\ \ \mbox{ for all } x\in U\overset{open}{\subset} G.$$

\begin{lemma}\label{alg_1Hhom}
	The map $f$ defined above is a $(1,H)$-morph of algebraic groups.
\end{lemma}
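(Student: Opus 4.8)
The plan is to mirror the proof of the abstract-group statement (Lemma~\ref{1Hhom}) essentially verbatim, but being careful about domains of rationality and about interpreting the stabiliser $H$ scheme-theoretically. First I would record the setup: $f$ is the rational map defined on an open dense $U \subset G$ by the characterising equation $\,^{f(x)}\theta = \theta^x$, obtained by composing the ($G$-)orbit map $x \mapsto \theta^x$ (a rational map $G \to X$, since the $G$-action on $X$ is only rational) with the rational section $K/H \to K$ and noting that $\theta^G \subseteq \,^K\theta$ ensures this composite lands in $K$. Since all the maps involved are rational, $f$ is rational and $1 \in U$ can be arranged so that $f(1) = 1$.

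Next I would verify condition (3), the weak-morph condition, for $x, y, xy \in \dom(f)$ (possibly after shrinking $U$ so that $\dom(f)$ is stable under the relevant partial multiplications on a dense open set, exactly as rational-map arguments require). The computation is the same as in Lemma~\ref{1Hhom}: from $\,^{f(xy)}\theta = \theta^{xy} = (\theta^x)^y = (\,^{f(x)}\theta)^y = \,^{f(x)f(y)}\theta$ (using that the $G$- and $K$-actions commute) we get $\,^{f(xy)^{-1}f(x)f(y)}\theta = \theta$, hence $f(xy)^{-1}f(x)f(y) \in \Sta_K(\theta) = H$ on the dense open set where everything is defined. Here I should be slightly careful that $H$ is the scheme-theoretic stabiliser, so ``lies in $H$'' should be read at the level of $\bK$-points, which is what we need since $f$ takes values in $K(\bK)$; the argument only uses the $\bK$-point stabiliser, which is contained in $H$.

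Then condition (2), $f(U) \subseteq N_K(H)$: pick a $\bK$-point $h \in H$, i.e. $\,^h\theta = \theta$, and compute $\,^{f(x)^{-1}h f(x)}\theta = \,^{f(x)^{-1}h}\theta^x = \,^{f(x)^{-1}}\theta^x = \,^{f(x)^{-1}f(x)}\theta = \theta$, so $f(x)^{-1} h f(x) \in H$; since this holds for all $\bK$-points $h$ of $H$ and $H$ is reduced on $\bK$-points (or, more carefully, since conjugation by $f(x)$ is an automorphism of $K$ carrying $\Sta_K(\theta)$ to $\Sta_K(\,^{f(x)^{-1}}\theta) = \Sta_K(\theta)$ as subgroup schemes because $\,^{f(x)^{-1}}\theta = \theta$ up to $H$ — in fact $\theta^x = \,^{f(x)}\theta$ exactly), we conclude $f(x) \in N_K(H)$. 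This verifies that $f$ is a $(1,H)$-morph, since conditions (1) and (4) are vacuous for $L = 1$. The main obstacle I expect is purely bookkeeping rather than conceptual: making sure the open dense set $\dom(f)$ is chosen compatibly so that $x, y, xy$ all lie in it on a dense open subset of $G \times G$ (standard for rational maps on irreducible varieties, using connectedness of $G$), and being scrupulous about whether identities like $\theta^x = \,^{f(x)}\theta$ hold as scheme-valued points or only on $\bK$-points — but since $X$ is separated and $\theta \in X(\bK)$, the stabiliser conditions are closed and the $\bK$-point computations suffice to land in the subgroup schemes $H$ and $N_K(H)$.
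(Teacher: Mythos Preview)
Your proposal is correct and follows essentially the same approach as the paper: establish that $f$ is a rational map, then verify conditions (2) and (3) by the identical computations from Lemma~\ref{1Hhom}, with (1) and (4) vacuous. The paper is slightly more precise than you are about the rationality of $f$: it factors $f$ explicitly as $G\hookrightarrow \{\theta\}\times G\rightarrow\,^K\theta\rightarrow K/H\rightarrow K$ and cites Demazure--Gabriel for the fact that the bijection $\,^K\theta\rightarrow K/H$ is an algebraic map (this is the one step that is not automatic). Your sentence ``composing the orbit map $x\mapsto\theta^x$ with the rational section $K/H\to K$'' elides exactly this link from $\,^K\theta$ to $K/H$, so you should make that step explicit. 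Conversely, the paper simply says the verifications of (2) and (3) are ``exactly the same as in the abstract case'' and does not rehearse the domain bookkeeping or scheme-theoretic stabiliser remarks that you spell out.
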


\begin{proof}
	We can think of $f$ as the composition of the following rational maps
	$$G\hookrightarrow \{\theta\}\times G\rightarrow\,^K\theta\rightarrow K/H\rightarrow K.$$
	
	Note that $^K\theta\rightarrow K/H$ is an algebraic map by Demazure-Gabriel \cite[Proposition 3.2.1]{DG}. We then have that the composition is rational since each domain of definition intersects the previous map's image.
	
	The proof that $f(x)f(y)\in f(xy)H$ for $x,y\in G$ with $f(x),f(y)$ and $f(xy)$ defined is exactly the same as in the abstract case, as is the proof that $f(G)\subset N_K(H)$.
	
\end{proof}

Now we fix algebraic (group, subgroup scheme) pairs $(G,L)$ and $(K,H)$ with $H$ soluble and $G$ connected. Denote by $m_G, m_K$ the corresponding multiplication maps, $\Delta_G, \Delta_K$ the diagonal embeddings, and $inv_G, inv_K$ the inverse maps. Let $\theta:L\rightarrow K$ be a homomorphism of algebraic group schemes. Furthermore, choose $\widetilde{H}$ to be an algebraic subgroup of $H$, characteristic in $N=N_K(H)$ such that $A\coloneqq H/\widetilde{H}$ is commutative. We denote the quotient map $H\rightarrow A$ by $\pi$.

%\begin{equation} 
%\begin{CD}
%N\times A @>\text{rational section}>> N\times H @>\text{conj}>> H\\
%@AAA      @AAA       @V\pi VV \\
%H\times A @>\text{rational section}>> H\times H @>e>> A
%\end{CD}
%\end{equation}
We can define an $N$-action on $H$ by conjugation. Note that since $\widetilde{H}$ is characteristic in $N$, so preserved by conjugation, this passes to an algebraic $N$-action on $A$. Hence, we have an algebraic action of $N$ on $A$ which is trivial on $H$ (since $A$ is commutative). This gives us an algebraic $N/H$-action on $A$. For an element $f\in \LH(G,K)$, we get a rational homomorphism $G\rightarrow N/H$ which is, in fact, algebraic by Lemma \ref{rat_hom}. Thus, every element of $\LH(G,K)$ induces an algebraic $G$-action on $A$. This $G$-action respects the multiplication operation of $A$, i.e. it is an algebraic automorphic $G$-action. 

%[Hence, we have a rational action of $N$ on $A$ which is trivial on $H$. This gives us a rational $N/H$-action on $A$ [CHECK]. For an element $f\in \LH(G,K)$, we get a rational homomorphism $G\rightarrow N/H$ which is in fact algebraic by Lemma \ref{rat_hom}. Thus, every element of $\LH(G,K)$ induces a rational $G$-action on $A$ [SHOULD IT BE ALGEBRAIC?]. This $G$-action respects the multiplication operation of $A$ - i.e. it a rational automorphic $G$-action.[CHECK- RATIONAL OR ALGEBRAIC ACTION]

%In particular, since $H$ acts trivially on $A$, for an element $f\in [LH]^{\theta}om(G,K)$ we get the following rational homomorphism (since $H$ acts trivially on $A$) [REWRITE THIS PARAGRAPH - CAREFUL WITH ACTIONS]
%$$G\xrightarrow{f} N\rightarrow Hom(A,A)$$

%This is actually algebraic by Lemma \ref{rat_hom}. Thus, every element of $[LH]^{\theta}om(G,K)$ induces an algebraic $G$-action on $A$ where $L$ acts trivially. This $G$-action respects the multiplication operation of $A$ - i.e. it an algebraic automorphic $G$-action.

As in the case for abstract groups, we can form something resembling an exact sequence. Let $\rho$ be a rational $G$-action on $A$, and define
$$[L\widetilde{H}]^\theta {\mbox{\rm mo}}(G,N)_\rho\subset [L\widetilde{H}]^\theta {\mbox{\rm mo}}(G,N),\,\,\quad [LH]^\theta {\mbox{\rm mo}}(G,N)_\rho\subset [LH]^\theta {\mbox{\rm mo}}(G,N)$$
as the subsets of weak morphs which induce the action $\rho$.

We get the following theorem.

\begin{theorem}(cf. Theorem \ref{exact_seq})
	\label{alg_exact_seq}
	For a rational $G$-action $\rho$ on $A$ 
	the following statements hold:
	\begin{enumerate}
		\item There is a restriction map 
		$$
		\Res:
		[L\widetilde{H}]^\theta{\mbox{\rm mo}} (G,N)_\rho
		\longrightarrow 
		[LH]^\theta {\mbox{\rm mo}}(G,N)_\rho, \ \ \
		\Res (\langle f\rangle ) = [f]
		$$
		where 
		$\langle f\rangle$ and $[f]$
		are the equivalence classes %of $f$
		in 
		$[L\widetilde{H}]^\theta{\mbox{\rm mo}} (G,N)_\rho$
		and
		$[LH]^\theta {\mbox{\rm mo}}(G,N)_\rho$.
		%correspondingly.
		\item The abelian group $Z_{Rat}^1(G,L;(A,\rho))$
		acts freely on the set $[L\widetilde{H}]^\theta{\mbox{\rm mo}} (G,N)_\rho$
		by
		$$
		\gamma \cdot \langle f \rangle \coloneqq \langle \dot{\gamma} f \rangle
		\ \mbox{ where } \ 
		\dot{\gamma}f = m_K\circ (\dot{\gamma}\times f)\circ\Delta_G$$  
		and $\dot{\gamma}: G \xrightarrow{\gamma} A \rightarrow H$
		comes from a rational Rosenlicht section $A\rightarrow H$ (cf. \cite[Theorem 10]{Ros2}) with
		$\dot{\gamma} (1)=1$.
		\item The corestricted restriction map 
		$
		\Res:
		[L\widetilde{H}]^\theta{\mbox{\rm mo}} (G,N)_\rho
		\longrightarrow 
		\mbox{\rm Im}(\Res) %\LH(G,N)_\rho
		$
		is a quotient map by the $Z_{Rat}^1(G,L;(A,\rho))$-action.  
		\item If $H$, $\widetilde{H}$ and $A$ are reduced, two classes $\langle f\rangle, \langle g\rangle\in[L\widetilde{H}]^\theta{\mbox{\rm mo}} (G,N)_\rho$ 
		lie in the same $B_{Rat}^1(G,L;(A,\rho))$-orbit if and only if
		there exist
		$h\in H$, $f^\prime\in \langle f\rangle$, $g^\prime\in \langle g\rangle$
		such that $[f(L),h]\subset\widetilde{H}$ and $f^\prime (x) = h g^\prime (x) h^{-1}$
		for all $x \in G$.
		
		%there exist a family of elements $(h_S\in H(S))_{S\in \bK-Alg}$, $f'\in \langle f\rangle$ and $g'\in\langle g\rangle$ such that $f_S'(x)=h_Sg_S'(x)h_S^{-1}$ for all $x\in G(S)$ and all finitely-generated $\bK$-algebras $S$.
		\item
		There is an obstruction map
		$$\Obs:
		[LH]^\theta\mbox{\rm mo}(G,N)_\rho
		\longrightarrow 
		H_{Rat}^2(G,L;(A,\rho)), \ \ \
		\Obs ([f]) = [f^\sharp]
		$$
		where the cocycle $f^\sharp$ is defined by	$$G\times G\xrightarrow{(p_1,p_2,m_K)} G\times G\times G \xrightarrow{(f,f,inv_K f)} K\times K\times K\xrightarrow{m_K} H\xrightarrow{\pi} A$$
		Here, $p_1$ and $p_2$ denote projection to the first and second coordinate respectively.
		
		%$$G\times G\xrightarrow{\begin{pmatrix} f\times id\\ id\times f\\ inv_K\circ f\circ m_K\end{pmatrix}} K\times K\times K\xrightarrow{m_K\circ(m_K\times id)} H\xrightarrow{\pi} A$$
		
		\item The sequence (cf. Sequence~(\ref{exact_seq0}))
$$
[L\widetilde{H}]^\theta{\mbox{\rm mo}} (G,N)_\rho
\longrightarrow 
\LH(G,N)_\rho
\longrightarrow 
H^2_{Rat}(G,L;(A,\rho))
$$
is exact, i.e., the image of $\Res$ is equal to $\Obs^{-1}([0])$.
	\end{enumerate}
\end{theorem}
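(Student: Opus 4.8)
The plan is to transcribe the proof of Theorem~\ref{exact_seq} line by line, upgrading each set-theoretic construction to a rational one and book-keeping domains of definition. The single new ingredient is that whenever the abstract proof lifts a map valued in $A=H/\widetilde H$ back to $H$, we instead compose with a rational \emph{Rosenlicht section} $s\colon A\to H$, normalised so that $s(1)=1$ (such a section exists by \cite[Theorem 10]{Ros2} because $H\to A$ is a quotient morphism of algebraic groups, and it can be translated so as to send $0_A$ to $1_H$); this section is merely rational, which is precisely why the obstruction lands in $H^2_{Rat}$ rather than $H^2_{Alg}$. Every rational map occurring in the argument --- $f$ itself, $\dot\gamma=s\circ\gamma$, the product $\dot\gamma f=m_K\circ(\dot\gamma\times f)\circ\Delta_G$, a map $\alpha\colon G\to H$ witnessing an equivalence $[f]=[g]$, the pointwise product $x\mapsto\alpha(x)^{-1}f(x)$, and $f^\sharp$ --- is a composite of rational maps whose intermediate images meet the next domain, hence is rational and defined at the identity; and, since $G$ (so also $G^2$) is irreducible and $A$ is separated, any equality of two such maps verified on a dense open set holds on the whole intersection of their domains. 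With these conventions the $(L,H)$-morph axioms are read on domains of definition exactly as in Section~\ref{s3.3}.

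Granting this, parts~(1), (2), (3) and~(5) are the abstract arguments verbatim. For~(1): equivalent weak $(L,\widetilde H)$-morphs differ on their common domain by a map into $\widetilde H\subseteq H$, hence are equivalent as weak $(L,H)$-morphs and induce the same homomorphism $G\to N/H$, so $\Res$ is well defined and compatible with $\rho$. For~(2): lifting $\gamma\in Z^1_{Rat}(G,L;(A,\rho))$ through $s$, the cocycle identity for $\gamma$ becomes condition~(3) for $\dot\gamma f$ by the computation in $N/\widetilde H$ in the proof of Theorem~\ref{exact_seq}; triviality of $\gamma$ on $L$ and $s(1)=1$ give $(\dot\gamma f)|_L=\theta$; well-definedness on classes and freeness are formal. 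For~(3) and~(5): from $[f]=[g]$ one extracts a rational $\alpha\colon G\to H$ with $\alpha|_L\equiv1$ and $f(x)=\alpha(x)g(x)$ on the common domain, and the three conditions --- weak-morph condition for $f$, weak-morph condition for $g$, cocycle condition for $\overline\alpha\coloneqq\pi\circ\alpha$ --- are pairwise equivalent; this yields both that $\Res$ corestricted to its image is the quotient by the $Z^1_{Rat}$-action, and that $f^\sharp$ and $g^\sharp$ differ by $d\,\overline\alpha\in B^2_{Rat}(G,L;A)$, so $\Obs$ is well defined. That $f^\sharp$ is a $2$-cocycle trivial on $L^2$ is the coboundary-of-associativity identity together with the fact that $f|_L=\theta$ is a homomorphism.

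Part~(6) is then immediate. If $f$ is a weak $(L,\widetilde H)$-morph then $f(x)f(y)f(xy)^{-1}\in\widetilde H$, so $f^\sharp\equiv0$ and $\Obs(\Res(\langle f\rangle))=[0]$. Conversely, if $\Obs([f])=[0]$ pick $\overline\alpha\in C^1_{Rat}(G,L;A)$ with $d\,\overline\alpha=f^\sharp$, put $\alpha\coloneqq s\circ\overline\alpha$ (so $\alpha|_L\equiv1$, as $\overline\alpha$ vanishes on $L$ and $s(1)=1$), and set $g(x)\coloneqq\alpha(x)^{-1}f(x)$; then $[g]=[f]$, $g|_L=\theta$, and the verification $g^\sharp\equiv0$ in $N/\widetilde H$ from the proof of Theorem~\ref{exact_seq} shows that $g$ is a weak $(L,\widetilde H)$-morph inducing $\rho$ with $\Res(\langle g\rangle)=[f]$. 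Finally, part~(4) is the only place where the reducedness of $H$, $\widetilde H$, $A$ enters: it guarantees that $A^L$, the element $h$, and the coboundary $d\,\overline h$ are detected on $\bK$-points, so that the reversible chain ``$d\,\overline h\cdot\langle f\rangle=\langle g\rangle\iff f(x)=hg(x)h^{-1},\ [f(L),h]\subset\widetilde H,\ \overline h\in A^L$'' of the abstract proof applies unchanged.

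I expect the main obstacle to be exactly the domain bookkeeping underlying parts~(2), (3) and~(6): one must check that each composite is rational and defined at $1$, that restriction to the (possibly infinitesimal) subgroup scheme $L$ makes sense --- automatic when $L$ is infinitesimal, since then its only point is $1$, and otherwise handled by connectedness of $L$ and separatedness of $A$ --- and that identities of rational maps established on a dense open set propagate to the full common domain. The appearance of the merely-rational Rosenlicht section is what forces rational cohomology into the statement, but beyond the above bookkeeping it causes no trouble; the reducedness assumption in part~(4) is a minor technical convenience.
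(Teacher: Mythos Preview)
Your proposal is correct and follows essentially the same approach as the paper's own proof: both reduce to the abstract argument of Theorem~\ref{exact_seq}, using a Rosenlicht section (normalised at the identity) to make all lifts $A\to H$ rational, and observing that every map appearing is a composite of rational maps whose domains contain the identity. In fact you supply considerably more detail than the paper, which after recording these two observations simply says ``the remainder of the proof follows in the same way as in the proof of Theorem~\ref{exact_seq} \ldots\ doing everything diagrammatically.''
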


\begin{proof}
	If $\langle f\rangle =\langle g \rangle$ then the map $$\alpha: G\xrightarrow{(f, inv_K g)} K\times K\xrightarrow{m} K$$ has image in $\widetilde{H}$ and is trivial on $L$. It is rational as it is a composition of rational maps, and the identity is in the domain of definition and image of each map.
	
	We also observe that given an analogous $\alpha:G\rightarrow H$ (i.e. corresponding to $[f]=[g]$) we get $\pi\alpha:G\rightarrow A$. Denoting the Rosenlicht section \cite[Theorem 10]{Ros2} $A\rightarrow H$ by $\tau$, we see that $\tau\pi\alpha=\alpha$ and thus $\dot{(\pi\alpha)}=\alpha$. Note that we may assume that the Rosenlicht section is defined at $0_A$ by composing with a translation if necessary. All the maps here are rational. In particular, $\pi\alpha\in C^1_{Rat}(G,L;(A,\rho))$.
	
	With these observations in mind, the remainder of the proof follows in the same way as in the proof of Theorem \ref{exact_seq} does for abstract groups, doing everything diagrammatically. 
	
\end{proof}

Before going any further, let's consider the following case where we can use this exact sequence directly.
A restricted $\fg$-module $(V,\theta)$ satisfying the condition that $\Aut_\fg(V)=\bK^\times$ is called a {\em brick}. A brick is necessarily an indecomposable $\fg$-module.

\begin{theorem}
	\label{brick}
	Suppose $G$ is a semisimple, simply-connected algebraic group over an algebraically closed field $\bK$ of characteristic $p>0$, with Lie algebra $\fg$. Suppose further that, if $p=2$, $\fg$ does not contain $A_1, B_2$ or $C_l$ ($l\geq3$) as a direct summand. Let $(V,\theta)$ be a finite-dimensional $G$-stable brick. Then there exists a unique $G$-module structure $\Theta$ on $V$ with $\Theta\vert_{G_{1}}=\theta$.  
\end{theorem}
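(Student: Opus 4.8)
The plan is to apply Theorem~\ref{alg_exact_seq} together with the vanishing result of Lemma~\ref{H2}, in the spirit of Theorem~\ref{branching} but with a one-step filtration. First I would set $L = G_{(1)}$, the first Frobenius kernel, which is a closed connected (infinitesimal) subgroup scheme of $G$, and take $K = \Aut_{\bK}(V)$, $H = \Aut_{\fg}(V)$, $N = N_K(H)$. Since $(V,\theta)$ is a brick, Proposition~\ref{alg_aut} gives $H = \Aut_\fg(V) = \bK^\times$ (here $J = 0$ because $\End_\fg(V) = \bK$), so $H$ is already abelian and the subnormal series is trivial: we take $\widetilde H = 1$, so $A = H/\widetilde H = \bK^\times = \bG_m$.

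Next I would produce the initial morph. Because $(V,\theta)$ is $G$-stable, the orbit inclusion $\theta^G \subseteq {}^K\theta$ holds; the relevant $K$-$G$-set is $X = \Hom_{\mathrm{gp\text{-}sch}}(G_{(1)}, K)$ with $G$ acting by twisting via $\Ad$ and $K$ by conjugation, and $H = \Sta_K(\theta)$. One needs a rational section $K/H \to K$, which exists by Rosenlicht; then Lemma~\ref{alg_1Hhom} yields a $(1,H)$-morph $f : G \to K$ of algebraic groups, and as in Section~\ref{s2.4} we may arrange $f|_{G_{(1)}} = \theta$ so that $f$ is an $(L,H)$-morph lying in $[LH]^\theta{\mathrm{mo}}(G,N)$. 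This $f$ induces a rational $G$-action $\rho$ on $A = \bG_m$; since $\bG_m$ has no nontrivial algebraic automorphisms, this action is trivial.

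Now I would invoke the exact sequence of Theorem~\ref{alg_exact_seq}(6): the class $[f] \in [LH]^\theta{\mathrm{mo}}(G,N)_\rho$ has an obstruction $\Obs([f]) \in H^2_{Rat}(G, G_{(1)}; (A,\rho))$, and $[f]$ lifts to an element of $[L\widetilde H]^\theta{\mathrm{mo}}(G,N)_\rho = [L\,1]^\theta{\mathrm{mo}}(G,N)_\rho$ — that is, to a genuine rational homomorphism $G \to N$ restricting to $\theta$ — precisely when $\Obs([f]) = 0$. By Lemma~\ref{H2}, applied to the trivial $G$-action on $A = \bG_m$ and using the hypothesis on $\fg$ when $p = 2$, we have $H^2_{Rat}(G, G_{(1)}; \bG_m) = 0$, so the obstruction vanishes automatically. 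Hence $[f]$ lifts; a rational homomorphism $G \to N \subseteq K = \GL(V)$ extending $\theta$ is, by Lemma~\ref{rat_hom} (as $G$ is connected), an algebraic homomorphism, i.e. a genuine rational representation $\Theta : G \to \GL(V)$ with $\Theta|_{G_{(1)}} = \theta$. For uniqueness, by Theorem~\ref{alg_exact_seq}(2)--(4) the fibre of $\Res$ over $[f]$ is a quotient of $Z^1_{Rat}(G, G_{(1)}; A)$, with $B^1$-orbits corresponding to conjugation; two lifts giving isomorphic $G$-modules differ by an element of $H^1_{Rat}(G, G_{(1)}; A)$. By Corollary~\ref{H1+H2} (or Lemma~\ref{H1} combined with Lemma~\ref{zero}), $H^1_{Rat}(G, G_{(1)}; \bG_m) = \{\mu \in \Hom(G, \bG_m) \mid \mu|_{G_{(1)}} \equiv 0\}$, which is trivial because $G$ is semisimple and simply connected (hence has no nontrivial characters, a fortiori none trivial on $G_{(1)}$). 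Thus the $G$-module structure is unique up to isomorphism.

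The main obstacle is not any single deep step — all the hard analytic and cohomological input is packaged in Lemmas~\ref{rat_hom}, \ref{H1}, \ref{H2} and Theorem~\ref{alg_exact_seq} — but rather the careful bookkeeping to verify the hypotheses of Theorem~\ref{alg_exact_seq} in this setting: namely that $\Aut_\fg(V) = \bK^\times$ is a \emph{reduced} algebraic group (needed for part (4), though for uniqueness here part (2)/(3) already suffices), that the scheme-theoretic stabiliser $H = \Sta_K(\theta)$ genuinely coincides with $\Aut_\fg(V)$, and that the requisite rational sections $K/H \to K$ and $A \to H$ exist. Since $H = \bG_m$ is a torus these sections are unproblematic, and the identification of the stabiliser is routine, so the proof reduces to assembling the cited results.
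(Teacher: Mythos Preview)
Your overall strategy matches the paper's exactly: produce an $(L,H)$-morph with $L=G_{(1)}$ and $H=\Aut_\fg(V)=\bK^\times$, take $\widetilde H=1$, and kill the obstruction and the ambiguity using Lemmas~\ref{H2} and~\ref{H1}. However, there is a genuine gap at the sentence ``as in Section~\ref{s2.4} we may arrange $f|_{G_{(1)}} = \theta$.'' Section~\ref{s2.4} treats abstract groups, where one simply redefines $f$ pointwise on $L$; for algebraic groups $f$ must remain a \emph{rational map} $G\to K$, and Lemma~\ref{alg_1Hhom} only guarantees that $f|_{G_{(1)}}$ and $\theta$ agree as maps to $N/H$, not as maps to $K$. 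The paper handles this explicitly: the discrepancy $R\coloneqq f^{-1}\theta:G_{(1)}\to H=\bK^\times$ is automatically a regular function because $G_{(1)}$ is infinitesimal (so $\bK[G_{(1)}]$ is local Artinian and rational functions are regular); since $\bK[G_{(1)}]$ is a quotient of $\bK[G]$ one lifts $R$ to $\widetilde R\in\bK[G]$, inverts the zero locus of $\widetilde R$ to land in $\bK^\times$, and replaces $f$ by $\widetilde f\coloneqq f\cdot\widetilde R$. This is where the infinitesimal nature of $L=G_{(1)}$ is actually used, and it is not covered by appealing to Section~\ref{s2.4}.

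Two smaller points. First, $\bG_m$ does have a nontrivial algebraic automorphism (inversion), so your stated reason for the triviality of $\rho$ is not quite right; the correct argument is either that $H=\bK^\times$ is central in $K=\GL(V)$ so conjugation is trivial, or that $G$ is connected and $\Aut(\bG_m)\cong\bZ/2\bZ$ is discrete. Second, the paper takes $X=\Hom_\bK(\fg,\gl(V))$ rather than $\Hom_{\mathrm{gp\text{-}sch}}(G_{(1)},K)$; these are equivalent since $G_{(1)}$ has height one, but the former is transparently a separated algebraic scheme, which is a hypothesis of Lemma~\ref{alg_1Hhom}.
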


\begin{proof}
	We use Theorem \ref{alg_exact_seq} in the following situation:
	\begin{itemize}
		\item $L=G_{1}$, the first Frobenius kernel of $G$, 
		\item $K=\GL(V)$,
		\item $H=\Aut_\fg(V)=\bK^\times$,
		\item $N=N_K(H)$,
		\item $X=\Hom_{\bK}(\fg,\fg\fl(V))$, a separated algebraic scheme with $\theta\in X(\bK)$.
		%\item $X$ is the algebraic group determined by the functor $S\mapsto Lin(\fg,\fg\fl(V))\otimes S$. This is well-defined since $\fg$ and $\fg\fl(V)$ are finite-dimensional vector spaces. In particular, $X$ is a separated algebraic scheme, and $\theta\in X(\bK)$.
	\end{itemize}
	
	%This is very similar to the situation we shall be considering later with one important simplification. We make the assumption here that $V$ is a {\em brick} - this means that $\Aut_\fg(V)=\bK^{*}$.
	
	Observe that $G$ acts on $X$ on the right via the adjoint map on the domain and $\GL(V)$ acts on $X$ on the left via conjugation on the image. Furthermore, the actions commute, and the $G$-stability of $V$ gives us that $\theta^G\subset \,^{\mbox{\tiny GL}(V)}\theta$.
	
	Hence, Lemma \ref{alg_1Hhom} gives us a $(1,H)$-morph of algebraic groups, say $f:G\rightarrow \GL(V)$. In particular, it gives a homomorphism of algebraic groups $f:G\to\PGL(V)$, together with a rational lifting $\eta:\PGL(V)\to \GL(V)$. This rational lifting can be defined as follows: fix a basis of $V$ and let $U$ be the open subset of $\PGL(V)$ consisting of all cosets which can be represented by a (unique) matrix $A=(a_{ij})\in\GL(V)$ with $a_{11}=1$. Then define the map $\eta:U\to \GL(V)$ by assigning to each coset this representative.  
	
	Currently $f$ and $\theta$ give the same maps from $G_{1}$ to $N/H$ -- since 
$$
\,^{\theta(x)}\theta(a)(v)=\theta(x)\theta(a)\theta(x^{-1})(v)=\theta(xax^{-1})(v)=\theta^{x}(a)(v)
$$ 
for $x,a\in G_{1}(\bS)$, $v\in V(\bS)$ for any commutative %finite-dimensional 
$\bK$-algebra $\bS$. Note, however, that the maps $G\1\rightarrow K$ do not necessarily agree. 

	To fix this potential disagreement, we define a rational map $R:G_{1}\rightarrow H=\bK^\times$ by $R(g)=f(g)^{-1}\theta(g)$ for $g\in G_{1}(\bS)$.
	%$$G_{(1)}\xrightarrow{(inv_K\circ f)\times \theta} K\times K\xrightarrow{m_K} H$$
	%This map must in fact be algebraic since the only open dense subset of $G_{(1)}$ is $G_{(1)}$ itself [CHECK - CONDITIONS?]. We would like to extend this to a rational map on the whole of $G$.
	%Since $V$ is a brick, we have that $R\in \bK[G_{(1)}]$ since $\bK^{*}\subset \bK$. Hence $R$ can be extended to $\widetilde{R}\in\bK[G]$. Let $U=G\setminus\widetilde{R}^{-1}(0)$. This is open in $G$, and on $U$ we have that the image of $\widetilde{R}$ lies inside $\bK^{*}=H$. Hence, we can view $\widetilde{R}$ as a rational map $G\rightarrow H$ which restricts to $R$ on $G_{(1)}$. 
	There exists a rational map $\widetilde{R}:G\rightarrow H=\bK^\times$ which restricts to $R$ on $G_{1}$. Indeed, we have $R\in\bK[G_1]$ (as $G_1$ is infinitesimal), so we can lift it to $\widetilde{R}\in\bK[G]$ (since $\bK[L]$ is a quotient of $\bK[G]$). Let $U=G\setminus\widetilde{f}^{-1}(0)$. This is open in $G$, and on $U$ we have that the image of $\widetilde{R}$ lies inside $\bK^\times$, so $\widetilde{R}$ is a rational map $G\to\bK^\times$. If now we define $\widetilde{f}:G\rightarrow \GL(V)$ by $\widetilde{f}(g)=f(g)\widetilde{R}(g)$, we get that $\widetilde{f}$ is a $(G_{1},H)$-morph which restricts to $\theta$ on $G_{1}$, fixing the disagreement.

	Observe that with $\widetilde{H}\coloneqq 1$, we get (in the notation of the Theorem~\ref{alg_exact_seq}) $A=H$ and $G$ acting on $A$ trivially. Hence, the ``exact sequence'' from Theorem \ref{alg_exact_seq} is 
	$$H^1_{Rat}(G,G_{1};\bK^\times)\dashrightarrow [G_{1} 1]^\theta\mbox{\rm mo}(G,N)_1\rightarrow [G_{1}H]^\theta\mbox{\rm mo}(G,N)_1\rightarrow H^2_{Rat}(G,G_{1};\bK^\times)$$
	
	By Lemma \ref{H2}, $H^2_{Rat}(G,G_{1};\bK^\times)=0$. Hence $[\widetilde{f}]\in [G_{1}H]^\theta\mbox{\rm mo}(G,N)_1$ can be lifted to $\widehat{f}\in[G_{1} 1]^\theta\mbox{\rm mo}(G,N)_1$. This means that $\Theta\coloneqq\widehat{f}:G\rightarrow \GL(V)$ is a homomorphism of algebraic groups which restricts to $\theta$ on $G_{1}$. Furthermore, this representation is unique (up to equivalence) if $H^1_{Rat}(G,G_{1};\bK^\times)=0$. 
	
	By Lemma \ref{H1}, $H^1_{Rat}(G,G_{1};\bK^\times)=\{\mu\in \Hom(G;\bK^\times)\,\vert\,\mu\vert_{G_{1}}\equiv 1\}$. Since $G$ is perfect, $H^1_{Rat}(G,G_{1};\bK^\times)=0$ and the extension is unique.
\end{proof}

\subsection{$G$-Stable modules with soluble automorphisms}
\label{s3.4}
We return to the general situation, where $(G,L), (K,H)$ are algebraic (group, subgroup scheme) pairs with $H$ soluble, $G$ connected, and $H$ reduced. However, from now on we suppose that $L$ is a normal subgroup scheme of $G$. We also fix a homomorphism of algebraic groups $\theta:L\rightarrow K$, where the image commutes with $H$, so we are now dealing with $(L,H)$-morphs.
Everything in the previous section can be reformulated in terms of $(L,H)$-morphs without difficulty - the key difference is that the $G$-action on $A$ is now trivial on $L$. Since $H$ is soluble, we can find a subnormal series $H=H_0\rhd H_1\rhd\ldots\rhd H_k=\{1\}$ with commutative quotients $A_j=H_{j-1}/H_j$ and each $H_j$ characteristic in $N=N_K(H)$ and reduced.

Suppose that $f$ is an $(L,H)$-morph of algebraic groups such that $f\vert_L=\theta$. As in the case of abstract groups, we get the following theorem -- it generalises the procedure which we have used for bricks in the previous section.

\begin{theorem}(cf. Theorem \ref{branching})
	\label{alg_branching}
	Given an $(L,H)$-morph of algebraic groups $f=f_0$ with $f\vert_L=\theta$, we obtain any $(L,1)$-morph extending $\theta$ by applying the following procedure. Step $m$ is the following:
	
	\begin{enumerate}
		\item The $(L,H_{m-1})$-morph $f_{m-1}:G\rightarrow N$ such that
		$f_{m-1}|_L=\theta$ determines a rational $G$-action $\rho_m$ on $A_m$.
		\item If $\Obs ([f_{m-1}])\neq 0\in H^2_{Rat} (G, L;(A_m,\rho_m))$, then
		this branch of the process terminates.
		\item If $\Obs ([f_{m-1}])=0\in H^2_{Rat} (G, L;(A_m,\rho_m))$, then
		we choose an $(L,H_{m})$-morph $f_{m}:G\rightarrow N$
		such that $\Res ([f_m])=[f_{m-1}]$.
		\item For each element of $H_{Rat}^1 (G, L;(A_m,\rho_m))$ we choose
		a different $f_m$ branching the process. (The choices
		different by an element of $B_{Rat}^1 (G, L;(A_m,\rho_m))$
		are conjugate by an element of $H$.)
		\item We change $m$ to $m+1$ and go to step (1).
	\end{enumerate}
	
	An $(L,1)$-morph which restricts to $\theta$ on $L$ is equivalent to $f_k$ for one of the non-terminated branches. Two $(L,1)$-morphs $f, g$ come from different branches if and only if 
	there is no $h\in H$ such that $f(x)=h g(x)h^{-1}$ for all $x\in G$.
	%there is no family $(h_S\in H(S))_{S\in \bK-Alg}$ such that $f_S(x)=h_Sg_S(x)h_S^{-1}$ for all $x\in G(S)$ and all finitely-generated $\bK$-algebras $S$.
\end{theorem}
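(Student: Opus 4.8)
The plan is to import the entire argument of Theorem \ref{branching} essentially verbatim, replacing abstract-group cohomology with rational cohomology $H^i_{Rat}(G,L;-)$, and to use Theorem \ref{alg_exact_seq} in place of Theorem \ref{exact_seq} at each step of the induction. First I would set up the recursion exactly as in the abstract case: starting from the given $(L,H)$-morph $f=f_0$ with $f|_L=\theta$, one descends along the subnormal series $H=H_0\rhd H_1\rhd\cdots\rhd H_k=\{1\}$, at stage $m$ viewing $f_{m-1}$ as an element of $[LH_{m-1}]^\theta{\mbox{\rm mo}}(G,N)_{\rho_m}$ where $\rho_m$ is the rational $G$-action on $A_m=H_{m-1}/H_m$ induced by conjugation via $f_{m-1}$ (this is a well-defined rational automorphic $G$-action because $H_m$ is characteristic in $N$, as discussed before Theorem \ref{alg_exact_seq}). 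Note that $\widetilde H=H_m\leq H_{m-1}=H$ plays the role of the ``$\widetilde H$'' of Theorem \ref{alg_exact_seq}, with $A=A_m$, so the ``exact sequence'' there becomes exactly the sequence relating $[LH_m]^\theta{\mbox{\rm mo}}(G,N)_{\rho_m}$, $[LH_{m-1}]^\theta{\mbox{\rm mo}}(G,N)_{\rho_m}$ and $H^2_{Rat}(G,L;(A_m,\rho_m))$.

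Next I would run the three logical branches. Termination: by part (5)/(6) of Theorem \ref{alg_exact_seq}, $[f_{m-1}]$ lifts along $\Res$ to some $f_m\in[LH_m]^\theta{\mbox{\rm mo}}(G,N)_{\rho_m}$ if and only if $\Obs([f_{m-1}])=0\in H^2_{Rat}(G,L;(A_m,\rho_m))$; when this obstruction is non-zero the morph $f_{m-1}$ admits no refinement to an $H_m$-morph, hence no further refinement to an $(L,1)$-morph, so the branch correctly terminates. Branching: when the obstruction vanishes, parts (1)–(3) say the set of lifts of $[f_{m-1}]$ modulo the $Z^1_{Rat}(G,L;(A_m,\rho_m))$-action is a single point, so the fibre of $\Res$ over $[f_{m-1}]$ is a $Z^1_{Rat}$-orbit; part (4) (using that $H_{m-1},H_m,A_m$ are reduced, which we have assumed) identifies the $B^1_{Rat}(G,L;(A_m,\rho_m))$-suborbits with $H$-conjugacy, so the genuinely distinct choices at stage $m$ are indexed by $H^1_{Rat}(G,L;(A_m,\rho_m))=Z^1_{Rat}/B^1_{Rat}$, with $B^1_{Rat}$-equivalent choices being $H$-conjugate. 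After $k$ steps one reaches $f_k\in[L1]^\theta{\mbox{\rm mo}}(G,N)_{\rho_{k+1}}$, i.e. a rational homomorphism $G\to N$ restricting to $\theta$ on $L$; since $G$ is connected this is algebraic by Lemma \ref{rat_hom}, giving the desired $(L,1)$-morph extending $\theta$, and conversely any such extension, read along the series, produces a non-terminating branch.

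Finally I would verify the classification statement: two $(L,1)$-morphs $f,g$ extending $\theta$ arise from the same branch if and only if at every stage $m$ their images in $[LH_m]^\theta{\mbox{\rm mo}}(G,N)$ agree up to $B^1_{Rat}$-equivalence, which by part (4) of Theorem \ref{alg_exact_seq} is the assertion that they are related by conjugation by a single $h\in H$ (with $[f(L),h]\subset H_m$ automatically, since $f(L)=\theta(L)$ commutes with $H\supseteq H_m$); collapsing the telescope, $f$ and $g$ lie on the same branch precisely when $f(x)=hg(x)h^{-1}$ for all $x\in G$ for some $h\in H$, which is the stated criterion. The main obstacle, as in the abstract case, is purely bookkeeping: carefully checking that the reducedness hypotheses on $H_{m-1},H_m,A_m$ are exactly what is needed to invoke Theorem \ref{alg_exact_seq}(4) at every stage, and that the ``$\widetilde H$ characteristic in $N$'' condition propagates down the series (which it does because each $H_j$ is assumed characteristic in $N$); once these are in place the proof is, as the abstract-group analogue was, a formal consequence of the exact sequence, so I would simply write ``The proof is identical to that of Theorem \ref{branching}, using Theorem \ref{alg_exact_seq} in place of Theorem \ref{exact_seq} and $H^\bullet_{Rat}$ in place of $H^\bullet$.''
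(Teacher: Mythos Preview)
Your proposal is correct and matches the paper's approach exactly: the paper gives no explicit proof of Theorem~\ref{alg_branching} at all, relying on the ``cf.\ Theorem~\ref{branching}'' to indicate that the argument is the direct application of Theorem~\ref{alg_exact_seq} in place of Theorem~\ref{exact_seq}, just as Theorem~\ref{branching} was declared ``obvious'' from Theorem~\ref{exact_seq}. Your write-up in fact supplies considerably more detail than the paper does, including the careful check that the reducedness and characteristic-in-$N$ hypotheses on the $H_j$ are precisely what is needed to invoke part~(4) of Theorem~\ref{alg_exact_seq} at each stage.
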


We get the following corollaries, similarly to Section~\ref{s2.4}:

\begin{cor}
	Suppose $H_{Rat}^2(G,L;(A_m,\rho_m))=0$ for all $m$ for one of the branches. Then this branch does not terminate and there is a homomorphism $f:G\rightarrow K$ which restricts to $\theta$ on $L$.
\end{cor}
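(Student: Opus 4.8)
The plan is to run the procedure of Theorem~\ref{alg_branching} along the distinguished branch and to observe that the hypothesis forces every obstruction to vanish for trivial reasons.

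I would begin with $f_0=f$, the given $(L,H)$-morph of algebraic groups with $f_0|_L=\theta$, and argue by induction on the step number $m$. Suppose step $m$ begins with an $(L,H_{m-1})$-morph $f_{m-1}:G\to N$ satisfying $f_{m-1}|_L=\theta$; it determines the rational $G$-action $\rho_m$ on $A_m=H_{m-1}/H_m$, so $f_{m-1}$ represents a class in $[LH_{m-1}]^\theta{\mbox{\rm mo}}(G,N)_{\rho_m}$. By hypothesis $H^2_{Rat}(G,L;(A_m,\rho_m))=0$, so $\Obs([f_{m-1}])$ is automatically the zero class, since it lives in the trivial group. Consequently step~(2) of the procedure is never invoked, which is exactly the assertion that this branch does not terminate. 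Moreover, by the exactness statement of Theorem~\ref{alg_exact_seq}(6) the image of $\Res$ equals $\Obs^{-1}([0])$, so $[f_{m-1}]$ lies in the image of $\Res$; hence step~(3) supplies an $(L,H_m)$-morph $f_m:G\to N$ with $\Res([f_m])=[f_{m-1}]$ and, by the construction of these morph sets, $f_m|_L=\theta$. This closes the induction, and after $k$ steps one reaches $f_k$.

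Since $H_k=\{1\}$, the morph $f_k$ is an $(L,1)$-morph of algebraic groups with $f_k|_L=\theta$, i.e., a rational map $f_k:G\to K$ satisfying $f_k(x)f_k(y)=f_k(xy)$ whenever $x,y,xy\in\dom(f_k)$, that is, a rational homomorphism. As $G$ is connected, I would invoke Lemma~\ref{rat_hom} to extend $f_k$ uniquely to an algebraic group homomorphism $f:G\to K$. Since $f$ agrees with $f_k$ on a dense open subset of $G$ containing $1$, and $f_k$ restricts to the homomorphism $\theta$ on $L$, we still have $f|_L=\theta$, which produces the required homomorphism.

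Given Theorem~\ref{alg_branching} the argument is essentially formal: the obstruction classes lie in trivial groups, so there is nothing to verify at any step and the branch proceeds on its own. The only step that is more than bookkeeping is the closing appeal to Rosenlicht's theorem (Lemma~\ref{rat_hom}), which upgrades the rational $(L,1)$-morph $f_k$ into an honest algebraic group homomorphism; this is where connectedness of $G$ is used, and it is the natural place to look for any hidden subtlety.
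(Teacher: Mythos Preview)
Your proposal is correct and matches the paper's approach: the paper states this corollary without proof, indicating only that it follows from Theorem~\ref{alg_branching} ``similarly to Section~\ref{s2.4}''. Your write-up simply makes explicit the obvious induction the paper leaves implicit, and your closing appeal to Lemma~\ref{rat_hom} unpacks the paper's earlier remark that weak $(L,1)$-morphs of algebraic groups are just homomorphisms $G\to K$.
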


\begin{cor}
	Suppose $H_{Rat}^1(G,L;(A_m,\rho_m))=0$ for all $m$ for one of the non-terminating branches. Then this branch is the only branch. Moreover, if a homomorphism of algebraic groups $f:G\rightarrow K$ restricting to $\theta$ exists, then it is unique up to conjugation by an element of $H$.
\end{cor}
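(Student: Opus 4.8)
\emph{Proof proposal.} The plan is to deduce this from Theorem~\ref{alg_branching} in exactly the way Corollary~\ref{Uniq} is deduced from Theorem~\ref{branching} in the abstract setting; no new computation is needed, only a careful reading of the branching procedure. Fix the non-terminating branch $\mathcal{B}$ from the hypothesis, with its maps $f_0,\dots,f_k$ and its induced actions $\rho_1,\dots,\rho_k$.

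First I would show that $\mathcal{B}$ is the only branch. Branching in Theorem~\ref{alg_branching} occurs solely at step~(4): having reached $f_{m-1}$, the admissible continuations $f_m$ are indexed --- up to the coboundary ambiguity, which amounts to conjugation by an element of $H$ --- by $H^1_{Rat}(G,L;(A_m,\rho_m))$. Since this group vanishes for every $m$ along $\mathcal{B}$, there is no genuine choice at any step, so by induction on $m$ the procedure is forced to follow $\mathcal{B}$. The one point requiring care is that $\rho_m$ is determined only once $f_{m-1}$ is in hand, and $f_{m-1}$ itself is pinned down only up to conjugation by some $h\in H$; but conjugation by $h$ replaces $\rho_m$ by an isomorphic $G$-module (the isomorphism being conjugation by the image of $h$ in $N/H$), hence leaves the vanishing of $H^1_{Rat}(G,L;(A_m,\rho_m))$ intact, and the induction goes through. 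As $\mathcal{B}$ is non-terminating, it is in particular the unique non-terminating branch.

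Second, for uniqueness up to conjugation, suppose $\Theta_1,\Theta_2\colon G\rightarrow K$ are homomorphisms of algebraic groups with $\Theta_i|_L=\theta$. Each is an $(L,1)$-morph of algebraic groups restricting to $\theta$ (and, $G$ being connected, algebraic by Lemma~\ref{rat_hom}), so by Theorem~\ref{alg_branching} each is equivalent to the terminal map of some non-terminating branch, necessarily $\mathcal{B}$. Because $H_k=\{1\}$, equivalence of $(L,1)$-morphs is literal equality, so each $\Theta_i$ is one of the terminal maps produced by $\mathcal{B}$; hence $\Theta_1$ and $\Theta_2$ come from the same branch. The final assertion of Theorem~\ref{alg_branching} then says precisely that there is $h\in H$ with $\Theta_1(x)=h\,\Theta_2(x)\,h^{-1}$ for all $x\in G$, which is the claim.

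The main obstacle is the bookkeeping in the first step --- ruling out that some branch diverges from $\mathcal{B}$ by exploiting the conjugation ambiguity in $f_{m-1}$ and thereby landing on a superficially different action $\rho_m$ with nonzero $H^1_{Rat}$. Once one records that this ambiguity moves $\rho_m$ only within its isomorphism class, and that isomorphic coefficient modules yield isomorphic relative cohomology, the argument closes; everything else is a direct invocation of Theorem~\ref{alg_branching}.
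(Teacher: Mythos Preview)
Your proposal is correct and matches the paper's approach: the corollary is stated there without proof as an immediate consequence of Theorem~\ref{alg_branching}, in parallel with Corollary~\ref{Uniq} in the abstract setting. Your added discussion of the conjugation ambiguity in $\rho_m$ is more cautious than strictly necessary---two branches diverging at step $m$ share the same $[f_{m-1}]$ and hence literally the same $\rho_m$, not merely an isomorphic one---but the reasoning is sound and does no harm.
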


\begin{cor}
	Suppose $H_{Rat}^1(G,L;(A_k, \rho_k))\neq 0$ for one of the non-terminating branches. Then there exist algebraic homomorphisms $G\rightarrow K$ which are not conjugate by an element of $H$.
\end{cor}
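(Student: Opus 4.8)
The plan is to read the statement off the classification already packaged in Theorem~\ref{alg_branching}, realising the two required non-conjugate homomorphisms as the terminal morphs of two branches that agree through step $k-1$ and diverge at the last step, where the hypothesis $H^1_{Rat}(G,L;(A_k,\rho_k))\neq 0$ supplies the divergence. First I would fix the given non-terminating branch, producing morphs $f_0,\dots,f_{k-1}$ with $f_m\vert_L=\theta$ and vanishing obstruction at each step. Because the branch survives to the end, step $k$ is reached with $\Obs([f_{k-1}])=0\in H^2_{Rat}(G,L;(A_k,\rho_k))$, so by Theorem~\ref{alg_exact_seq}(6) the fibre $\Res^{-1}([f_{k-1}])$ inside $[LH_k]^\theta{\mbox{\rm mo}}(G,N)_{\rho_k}$ is non-empty; and since $H_k=\{1\}$, the defining equivalence on $[LH_k]^\theta{\mbox{\rm mo}}(G,N)_{\rho_k}$ is trivial, so its elements are genuine $(L,1)$-morphs, i.e. rational homomorphisms $G\to K$ restricting to $\theta$.

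Next I would use the nonvanishing of $H^1$ to separate two such lifts. By Theorem~\ref{alg_exact_seq}(2)--(3) the fibre $\Res^{-1}([f_{k-1}])$ is a single free orbit of $Z^1_{Rat}(G,L;(A_k,\rho_k))$, and by part~(4) two of its members are conjugate by an element of $H_{k-1}$ precisely when they lie in one $B^1_{Rat}$-orbit (the auxiliary condition $[f(L),h]\subset H_k=\{1\}$ being automatic, since $\theta(L)$ commutes with $H$). As $H^1_{Rat}(G,L;(A_k,\rho_k))=Z^1_{Rat}/B^1_{Rat}\neq 0$, I would pick cocycles $\gamma,\gamma'$ in two distinct $B^1_{Rat}$-classes and act on a fixed lift $\tilde f$ to form $f_k=\dot\gamma\,\tilde f$ and $g_k=\dot{\gamma'}\,\tilde f$: two $(L,1)$-morphs over the same $f_{k-1}$ lying in different $B^1_{Rat}$-orbits. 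Each is rational, and since $G$ is connected Lemma~\ref{rat_hom} upgrades it to an algebraic homomorphism $G\to K$ extending $\theta$. These are exactly the two choices that step~(4) of Theorem~\ref{alg_branching} assigns to the two distinct elements of $H^1_{Rat}(G,L;(A_k,\rho_k))$, so they label two different branches of the process.

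It then remains to promote ``distinct branches'' to ``not conjugate by any $h\in H$'', and this is exactly the classification clause of Theorem~\ref{alg_branching}, which I would invoke directly: $(L,1)$-morphs coming from different branches admit no $h\in H$ with $f_k(x)=h\,g_k(x)\,h^{-1}$ for all $x$. This last promotion is the main obstacle, and it is genuine, because the single-step sequence of Theorem~\ref{alg_exact_seq}(4) only certifies non-conjugacy by the subgroup $H_{k-1}$ (equivalently, $B^1_{Rat}$-inequivalence), whereas the Corollary demands non-conjugacy by the whole of $H$; a priori some $h\in H\setminus H_{k-1}$ stabilising the class of $f_{k-1}$ could permute the fibre and merge two $B^1_{Rat}$-orbits. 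What rules this out is precisely the branch bookkeeping built into Theorem~\ref{alg_branching}, so the crux of the write-up is to confirm that the two $H^1$-labels separated at the terminal step persist as genuinely distinct $H$-conjugacy classes; once that is in hand, $f_k$ and $g_k$ are the desired pair of algebraic homomorphisms $G\to K$ that are not conjugate by an element of $H$.
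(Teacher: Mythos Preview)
Your proposal is correct and follows the same approach as the paper, which treats this as an immediate consequence of Theorem~\ref{alg_branching} without further proof. You have simply unpacked the mechanism in more detail; the ``main obstacle'' you flag (promoting non-conjugacy by $H_{k-1}$ to non-conjugacy by all of $H$) is not an additional burden on you but is precisely the content of the final clause of Theorem~\ref{alg_branching}, which you correctly invoke.
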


We apply this theorem (and these corollaries) in the following case - a generalisation of the case from the previous section:

\begin{itemize}
	\item $G$ -- connected algebraic group over $\bK$ with Lie algebra $\fg$,
	\item $L=G_{1}$,
	\item $K=\GL(V)$, where $(V,\theta)$ is a finite-dimensional $G$-stable indecomposable $\fg$-module,
	\item $H=\Aut_\fg(V)$, 
	\item $X=\Hom_{\bK}(\fg,\fg\fl(V))$, a separated algebraic scheme with $\theta\in X(\bK)$.
	%\item $X$ is the algebraic group determined by the functor $S\mapsto Lin(\fg,\fg\fl(V))\otimes S$. This is well-defined since $\fg$ and $\fg\fl(V)$ are finite-dimensional vector spaces. In particular, $X$ is a separated algebraic scheme, and $\theta\in X(\bK)$.
\end{itemize}

Applying exactly the same argument as in Theorem~\ref{brick}, we only start to encounter problems when trying to extend the rational map $R:G_{1}\rightarrow H$ to a rational map on the whole of $G$. This can be fixed without much difficulty.

%It is still true that $G$ acts on $X$ on the right via the adjoint map and $GL(V)$ acts on $X$ on the left via conjugation. Furthermore the actions commute, and the $G$-stability of $V$ gives us that $\Orb_G(\theta)\subset \Orb_{GL(V)}(\theta)$.

%Hence, Lemma \ref{alg_1Hhom} gives us a $(1,H)$-homomorphism of algebraic groups, say $f:G\rightarrow GL(V)$. In order to apply Theorem \ref{alg_branching} to get a corresponding $G$-module structure on $V$, we need a $(G_{(1)},H)$-homomorphism of algebraic groups which restricts to $\theta$ on $G_{(1)}$.

%We currently have that $f$ and $\theta$ give the same maps to $K/H$ (as in the earlier case), but not necessarily to $K$. To fix this, we define a rational map $R:G_{(1)}\rightarrow H$ by 
%$$G_{(1)}\xrightarrow{(inv_K\circ f)\times \theta} K\times K\xrightarrow{m_K} H$$

%This map must in fact be algebraic since the only open dense subset of $G_{(1)}$ is $G_{(1)}$ itself [CHECK - CONDITIONS?]. We would like to extend this to a rational map on the whole of $G$.

As a variety, we have that $H=\bK^\times\times \bK^n\subset \bK^{n+1}$ for some $n$ [Proposition \ref{alg_aut}]. Hence, we get $R=(R_0,R_1,\ldots,R_n)$ where $R_i\in \bK[G_{1}]$ for $i=0,1,\ldots,n$. We can then lift each of these to elements of $\bK[G]$, so we obtain $\widetilde{R}=(\widetilde{R_0},\widetilde{R_1}\ldots,\widetilde{R_n}):G\rightarrow \bK^{n+1}$. We would like the image to lie in $H$. 
Thus,  we define $U=G\setminus R_0^{-1}(0)$. This is an open set in $G$, so we can view $\widetilde{R}$ as a rational map from $G$ to $\bK^\times\times \bK^n=H$ which is defined on $U$, and restricts to $R$ on $G_{1}$.

Now we can define $\widetilde{f}:G\rightarrow \GL(V)$ as $\widetilde{f}(g)=f(g)\widetilde{R}(g)$.
%$$G\xrightarrow{f\times \widetilde{R}} K\times K\xrightarrow{m_K} N$$
This is a $(G_{1},H)$-morph of algebraic groups, which restricts to $\theta$ on $G_{1}$. Hence, we are in the situation of Theorem \ref{alg_branching}. Observe that $\theta:G_{1}\rightarrow \GL(V)$ extends to a homomorphism of algebraic groups $\Theta:G\rightarrow \GL(V)$ if and only if there exists a $(G_{1},1)$-morph of algebraic groups extending $\theta$. In particular, the corollaries to Theorem \ref{alg_branching} can be used to determine the existence and uniqueness of a $G$-module structure on $V$.

\begin{cor}{\rm (Existence Test)}
	Suppose that $G$ is a connected algebraic group over $\bK$ with Lie algebra $\fg$, and that $V$ is an indecomposable $G$-stable finite-dimensional $\fg$-module. Then there exists a $G$-action on $V$, which respects the $\fg$-module structure, if and only if there is a branch (in the terminology of Theorem \ref{alg_branching}) which does not terminate; for instance, a branch such that $H^2_{Rat}(G,G_{1};(A_m,\rho_m))=0$ for all $(A_m,\rho_m)$ on that branch.
\end{cor}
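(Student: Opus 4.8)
The plan is to read this corollary off from Theorem~\ref{alg_branching} together with the translation, already set up in the paragraphs immediately preceding the statement, between $G$-module structures on $V$ and $(G_1,1)$-morphs of algebraic groups. First I would make the dictionary precise. Since $G_1$ is the first Frobenius kernel, it is a height one subgroup scheme with $\Lie(G_1)=\fg$, so a $G$-action on $V$ respecting the $\fg$-module structure is exactly a homomorphism of algebraic groups $\Theta:G\to\GL(V)$ with $\Theta\vert_{G_1}=\theta$ (viewing the restricted $\fg$-module $\theta$ as the corresponding $G_1$-module). Taking $H=1$ in the definition of an $(L,H)$-morph, conditions (2) and (4) are vacuous while (1) and (3) together say precisely that $\Theta$ is a group homomorphism; moreover, by Lemma~\ref{rat_hom}, a rational homomorphism out of the connected group $G$ is automatically algebraic. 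Hence a $G$-module structure on $V$ compatible with $\theta$ exists if and only if a $(G_1,1)$-morph of algebraic groups restricting to $\theta$ exists.

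Next I would assemble the input for Theorem~\ref{alg_branching}. By Proposition~\ref{alg_aut}, $H=\Aut_\fg(V)=\bK^\times\times(1+J)$ is soluble and reduced, so it admits a subnormal, $N$-stable, reduced series $H=H_0\rhd H_1\rhd\ldots\rhd H_k=\{1\}$ with commutative factors $A_m=H_{m-1}/H_m$; and the $G$-stability of $V$ furnishes, via Lemma~\ref{alg_1Hhom} and the extension of $R$ to $\widetilde{R}$ carried out in the discussion above, a $(G_1,H)$-morph $\widetilde{f}$ of algebraic groups with $\widetilde{f}\vert_{G_1}=\theta$. So we are exactly in the situation of Theorem~\ref{alg_branching} with initial term $f_0=\widetilde{f}$. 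Its concluding statement says that a $(G_1,1)$-morph restricting to $\theta$ is equivalent to $f_k$ for one of the non-terminated branches, and conversely every non-terminated branch produces such an $f_k$. Combined with the first paragraph, this gives the ``if and only if'': a $G$-module structure compatible with $\theta$ exists precisely when some branch of the procedure does not terminate.

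For the ``for instance'' clause I would argue as in the Existence Test of Section~\ref{s2.4}. Suppose along some branch $H^2_{Rat}(G,G_1;(A_m,\rho_m))=0$ for every $m$. Then at each step $m$ the obstruction $\Obs([f_{m-1}])$ lies in this zero group, so step (2) of the procedure is never triggered on this branch; and by exactness of the sequence in Theorem~\ref{alg_exact_seq}(6) one can always choose $f_m$ with $\Res([f_m])=[f_{m-1}]$. After $k$ steps the branch reaches $f_k$, a $(G_1,1)$-morph of algebraic groups extending $\theta$, hence a $G$-module structure on $V$ compatible with the $\fg$-action.

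I do not anticipate a serious obstacle: every piece of content has already been established. The only point demanding care is the very first step --- verifying that ``$G$-module structure compatible with $\theta$'' genuinely coincides with ``$(G_1,1)$-morph of algebraic groups extending $\theta$'' --- which rests on $\Lie(G_1)=\fg$, the equivalence between restricted $\fg$-modules and $G_1$-modules, and Lemma~\ref{rat_hom} to upgrade rational homomorphisms on the connected group $G$ to algebraic ones. Everything else is a direct application of Proposition~\ref{alg_aut}, Theorem~\ref{alg_exact_seq} and Theorem~\ref{alg_branching}.
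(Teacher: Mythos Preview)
Your proposal is correct and follows exactly the route the paper has in mind: the corollary is not proved separately in the paper but is left as an immediate consequence of Theorem~\ref{alg_branching}, the first of its three corollaries, and the observation (made in the text just before this statement) that $\theta$ extends to a homomorphism $G\to\GL(V)$ if and only if a $(G_1,1)$-morph of algebraic groups extending $\theta$ exists. You have simply spelled out this implicit argument, invoking Proposition~\ref{alg_aut}, Lemma~\ref{alg_1Hhom}, the $\widetilde{R}$-construction, and Lemma~\ref{rat_hom} at the appropriate places.
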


\begin{cor}\label{alg_Uniq} {\rm (Uniqueness Test)}
	Suppose that $G$ is a connected algebraic group over $\bK$ with Lie algebra $\fg$, and that $V$ is an indecomposable $G$-stable finite-dimensional $\fg$-module. Suppose further that there exists a $G$-action on $V$ 
which extends the $\fg$-module structure. This $G$-action is unique (up to isomorphism) if and only if there is a branch (in the terminology of Theorem \ref{alg_branching}) such that $H^1_{Rat}(G,G_{1};(A_m,\rho_m))=0$ for all $(A_m,\rho_m)$ on that branch.
\end{cor}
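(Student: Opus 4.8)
The corollary is a consequence of Theorem~\ref{alg_branching} once the objects are matched up. By the discussion immediately preceding the corollary, a $G$-module structure on $V$ extending the given $\fg$-module structure $\theta$ is the same as a homomorphism of algebraic groups $G\to\GL(V)$ restricting to $\theta$ on $G_1$, i.e. a $(G_1,1)$-morph of algebraic groups extending $\theta$. Moreover two such structures $\Theta_1,\Theta_2$ are isomorphic as $G$-modules if and only if $\Theta_1(x)=h\,\Theta_2(x)\,h^{-1}$ for all $x\in G$ and some $h\in H=\Aut_\fg(V)$: any $G$-module isomorphism $\phi\in\GL(V)$ restricts on $G_1$ to an automorphism of $(V,\theta)$, hence $\phi\in H$, and conversely conjugation by an element of $H$ is visibly a $G$-module isomorphism. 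Combining this with the final assertion of Theorem~\ref{alg_branching} --- the non-terminated branches correspond, via $\mathcal B\mapsto f_k$, bijectively to the $(G_1,1)$-morphs extending $\theta$ taken up to $H$-conjugacy --- reduces the statement to the following: the recursive process has a unique non-terminated branch if and only if there is a branch along which $H^1_{Rat}(G,G_1;(A_m,\rho_m))=0$ at every step $m$.

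For the ``if'' direction, let $\mathcal B$ be such a branch. Since all its actions $\rho_m$ are defined, $\mathcal B$ runs to step $k$, so it is non-terminated (and some branch must be, by the existence hypothesis). Let $\mathcal B'$ be any non-terminated branch. It and $\mathcal B$ both begin with the morph $f_0$, and I claim by induction that they agree at every step. Suppose they agree through step $m-1$; conjugating $\mathcal B'$ by a suitable element of $H$ (legitimate, since all the $H_j$ are characteristic in $N$) we may assume their level-$(m-1)$ morphs are literally equal, so they determine the same action $\rho_m$ and hence the same group $H^1_{Rat}(G,G_1;(A_m,\rho_m))=0$. By Theorem~\ref{alg_exact_seq}(2),(3) the set of valid level-$m$ choices $\Res^{-1}([f_{m-1}])$ is a single free $Z^1_{Rat}(G,G_1;(A_m,\rho_m))$-orbit, and by part~(4) --- whose side condition $[\,\theta(L),h\,]\subseteq\widetilde H$ is automatic here, since $\theta(L)$ centralises $H$ --- choices differing by an element of $B^1_{Rat}(G,G_1;(A_m,\rho_m))=Z^1_{Rat}(G,G_1;(A_m,\rho_m))$ are conjugate by an element of $H$. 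Hence the level-$m$ morphs of $\mathcal B$ and $\mathcal B'$ are $H$-conjugate, so $\mathcal B$ and $\mathcal B'$ agree at step $m$. Thus $\mathcal B'=\mathcal B$, $\mathcal B$ is the unique non-terminated branch, and the $G$-action is unique up to isomorphism.

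For the ``only if'' direction, suppose the $G$-action is unique, so there is a single non-terminated branch $\mathcal B\colon f_0,\dots,f_k$, extending to the essentially unique homomorphism $\Theta$. Assume for contradiction that $0\neq[\gamma]\in H^1_{Rat}(G,G_1;(A_m,\rho_m))$ for some step $m$ of $\mathcal B$, chosen minimal. By Theorem~\ref{alg_exact_seq}(3),(4) the class $[\gamma]$ carries $[f_m]$ to another element $[\dot\gamma f_m]$ of $\Res^{-1}([f_{m-1}])$ that is not $H$-conjugate to $[f_m]$. The plan is to prove that $[\dot\gamma f_m]$ also extends to a $(G_1,1)$-morph; such an extension is non-$H$-conjugate to $\Theta$, gives a second non-isomorphic $G$-module structure, and so contradicts uniqueness. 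To build it I would twist $\Theta$: adjust the $H_m$-valued ambiguity in the lift $\dot\gamma\colon G\to H_{m-1}$ so that $x\mapsto\dot\gamma(x)\Theta(x)$ is again a homomorphism --- equivalently, lift the abelian rational $1$-cocycle $\gamma$ (valued in $A_m=H_{m-1}/H_m$) to a non-abelian rational $1$-cocycle valued in $H_{m-1}$ for the conjugation action of $\Theta$. When $m$ is the last step this is automatic (in the brick case one simply multiplies $\Theta$ by the central character $\dot\gamma\in\Hom(G,\bK^\times)$); in general one peels off the successive additive layers of the unipotent group $H_m$ supplied by Proposition~\ref{alg_aut}, the relevant obstruction at each layer living in a relative second rational cohomology of $G$ modulo $G_1$. \textbf{The main obstacle} is exactly this: showing that the twisting lift is unobstructed, so that a nonzero class in $H^1_{Rat}(G,G_1;(A_m,\rho_m))$ along the unique non-terminated branch really does produce a second extension. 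The ``if'' direction, in contrast, is purely formal given Theorems~\ref{alg_branching} and~\ref{alg_exact_seq}.
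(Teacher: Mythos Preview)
The paper states this corollary without proof, treating both directions as immediate from Theorem~\ref{alg_branching}. Your argument for the ``if'' direction is correct and is precisely what the paper has in mind: once $H^1_{Rat}=0$ at every stage along some branch, no branching ever occurs (step~(4) of Theorem~\ref{alg_branching} is vacuous at each level), and the assumed existence of an extension forces that single branch to be non-terminating. This matches the unnumbered uniqueness corollary stated immediately after Theorem~\ref{alg_branching}.

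Your hesitation about the ``only if'' direction is well placed, and the obstacle you isolate is a genuine one rather than a defect in your write-up. Observe that the abstract-group analogue, Corollary~\ref{Uniq}, and the unnumbered uniqueness corollary right after Theorem~\ref{alg_branching} claim only the ``if'' implication; the biconditional appears for the first time in Corollary~\ref{alg_Uniq} and is not argued for. The difficulty you identify is real: a nonzero class in $H^1_{Rat}(G,G_1;(A_m,\rho_m))$ at an intermediate stage $m<k$ produces a second branch at that level, but nothing in Theorem~\ref{alg_branching} or Theorem~\ref{alg_exact_seq} prevents that new branch from dying later because of a nonzero obstruction in some $H^2_{Rat}(G,G_1;(A_j,\rho'_j))$ with $j>m$. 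Your proposed remedy---lifting the abelian cocycle $\gamma$ to a non-abelian $1$-cocycle with values in $H_{m-1}$ for the $\Theta$-conjugation action and then twisting $\Theta$---is exactly the right idea, and it succeeds automatically at the final step $m=k$ (this is the content of the Non-Uniqueness Test). For $m<k$, however, the lifting problem is itself governed by a tower of second-cohomology obstructions which the paper does not discuss. In short, the ``only if'' half of the corollary does not follow formally from Theorem~\ref{alg_branching} alone; either the biconditional is a slight overstatement, or it requires an additional argument (specific to the structure $H\cong\bK^\times\times(1+J)$ of Proposition~\ref{alg_aut}, say) that the paper does not supply.
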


Observe that combining Corollary~\ref{alg_Uniq} with Corollary \ref{H1+H2} for the $N$-stable subnormal series $H_m=1+J^m$, $m\geq 1$, we get a similar result to Proposition 4.3.1 in \cite{Xan}.
 
\subsection{Comparison with $C^\bullet_{Rat}(G/L;A)$}
\label{s3.5}

Let us now mimic the approach we took in Section \ref{s2.5} and examine how our cochain complex $(C^\bullet_{Rat}(G,L;A),d)$ compares with the complex $(C^\bullet_{Rat}(G/L;A),d)$ on the level of cohomology. We use the notation of Section~\ref{s3.3}. As with our discussion in Section~\ref{s2.5} we have to assume that $L$ acts trivially on $A$ for this discussion to be meaningful -- a condition which holds in the examples considered.

Similar to the case for abstract groups, we have the following proposition.

\begin{prop}\label{H1Map2}
	Under the aforementioned conditions
	we have isomorphisms of groups
	\newline 
	$H^0_{Alg}(G,L;A) \cong H^0_{Alg} (G/L;A)$
	and
	$H^1_{Alg} (G,L;A)\cong H^1_{Alg} (G/L;A)$.
\end{prop}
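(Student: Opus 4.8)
The plan is to mirror the proof of Proposition~\ref{H1Map} in the abstract setting, checking at each step that the maps involved are rational (hence, where needed, algebraic). First I would observe that $H^0_{Alg}(G,L;A)=A^G=H^0_{Alg}(G/L;A)$ directly from the definitions, since degree-zero cochains are just elements of $A$ fixed by the action and invariance under $G$ is the same as invariance under $G/L$ when $L$ acts trivially. For the degree-one statement, I would construct the inflation map $\inf:\widehat{C}^1_{Alg}(G/L;A)\to C^1_{Alg}(G,L;A)$ by $\inf(\mu)(g)=\mu(gL)$ and note this is well-defined on the algebraic cochain level because the quotient map $G\to G/L$ is a morphism of algebraic groups (here is where $L$ being a closed subgroup scheme matters, so that $G/L$ exists as an algebraic group and the quotient morphism is algebraic). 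This descends to $\Inf:H^1_{Alg}(G/L;A)\to H^1_{Alg}(G,L;A)$.

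Injectivity of $\Inf$ goes exactly as before: if $\inf(\mu)=da$ for some $a\in A$, then pulling back along the (faithfully flat, or at least dominant) quotient map forces $\mu=da$ already in $C^1_{Alg}(G/L;A)$, so $[\mu]=0$. For surjectivity, I would take $\eta\in Z^1_{Alg}(G,L;A)$, so $\eta$ is an algebraic map $G\to A$ satisfying the cocycle identity $\eta(gh)=\,^g\eta(h)+\eta(g)$ for all $g,h\in G$. Restricting $h$ to $L$ and using that $\eta|_L\equiv 0$ (this is the defining condition of $C^1_{Alg}(G,L;A)$) and that $L$ acts trivially, one gets $\eta(gh)=\eta(g)$ for $h\in L$; that is, $\eta$ is constant on the (right) cosets of $L$. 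Thus $\eta$ factors set-theoretically through $G/L$. The one genuinely new point — and the step I expect to be the main obstacle — is verifying that the induced map $\bar\eta:G/L\to A$ is \emph{algebraic}, not merely a map of abstract groups: one needs to know that a morphism $G\to A$ which is constant on $L$-cosets descends to a morphism on the quotient scheme $G/L$. This follows from the universal property of the quotient $G/L$ in the category of schemes (the quotient morphism $G\to G/L$ is a categorical, indeed faithfully flat, quotient, so $\bar\eta$ is automatically a morphism), but it genuinely uses that $L$ is a closed subgroup \emph{scheme} of the algebraic group $G$ and that $G/L$ exists with its expected properties — a subtlety absent in the abstract-group case. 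Once $\bar\eta$ is known to be algebraic, set $\mu=\bar\eta\in Z^1_{Alg}(G/L;A)$; then $\inf(\mu)=\eta$ by construction, giving surjectivity.

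Finally I would remark that since $G$ is connected (and $L$ connected), by Lemma~\ref{rat_hom} and Lemma~\ref{rat_cocycle} the rational and algebraic first cohomologies coincide — $H^1_{Rat}=H^1_{Alg}$ in all the relevant cases — so one could equally state the proposition for rational cohomology; but the cleanest formulation is the algebraic one given, and the proof above is self-contained modulo the quotient-scheme input. The entire argument is a routine transcription of Proposition~\ref{H1Map} once the scheme-theoretic descent of algebraicity along $G\to G/L$ is granted.
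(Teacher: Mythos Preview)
Your proposal is correct and takes essentially the same approach as the paper: the paper's proof consists of the single sentence ``Making use of the universal property of the quotient for algebraic groups, the proof follows word-for-word as in Proposition~\ref{H1Map}.'' You have written out in detail exactly the argument the paper leaves implicit, correctly identifying the universal property of $G\to G/L$ as the one new ingredient needed to descend algebraicity.
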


\begin{proof}
	Making use of the universal property of the quotient for algebraic groups, the proof follows word-for-word as in Proposition~\ref{H1Map}.	
\end{proof}

Recalling the observation that there is no distinction between $H^i_{Alg}$ and $H^i_{Rat}$ for $i=0,1$ this tells us that $H^0_{Rat}(G,L;A) \cong H^0_{Alg} (G/L;A)$
and
$H^1_{Rat} (G,L;A)\cong H^1_{Alg} (G/L;A)$ in these circumstances. 

The universal property of the quotient for algebraic groups further yields an analogue of Proposition \ref{H2inj}.

\begin{prop}
  The maps $\Inf_{Alg}:H^2_{Alg}(G/L;A)\to H^2_{Alg}(G,L;A)$ and \newline
  $\Inf_{Rat}:H^2_{Rat}(G/L;A)\to H^2_{Rat}(G,L;A)$ are injective.
\end{prop}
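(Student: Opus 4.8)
The plan is to transcribe the proof of Proposition~\ref{H2inj} to the algebraic and rational settings; the only genuinely new ingredient is a descent statement asserting that a morphism (respectively rational map) $\mu:G\to A$ invariant under right translation by $L$ factors through the quotient $\pi:G\to G/L$, which is supplied by the universal property of the quotient already used in Proposition~\ref{H1Map2}.

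I would argue the algebraic case first. Suppose $[\eta]\in\ker(\Inf_{Alg})$; replacing $\eta$ by a cohomologous normalised representative we may assume $\eta\in Z^2_{Alg}(G/L;A)$ is normalised, and that $\inf(\eta)=\pi^*\eta=d\mu$ for some $\mu\in C^1_{Alg}(G,L;A)$, i.e.\ a morphism $\mu:G\to A$ with $\mu|_L\equiv 0$. Running the computation of Proposition~\ref{H2inj} on functors of points (equivalently, comparing the two maps $G\times L\to G$ given by $m_G|_{G\times L}$ and by $p_1|_{G\times L}$, each composed with $\mu$) shows $\mu$ is invariant under right translation by $L$: since $\mu$ kills $L$ and $\inf(\eta)$ is pulled back from $(G/L)^2$, one gets $\inf(\eta)(g,h)=0$ for $h\in L$ from the cocycle condition and normalisation, whence $d\mu(g,h)=\,^g\mu(h)-\mu(gh)+\mu(g)=\mu(g)-\mu(gh)=0$, scheme-theoretically. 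The universal property of $G/L$ as the quotient of $G$ by right translation by $L$ then yields a unique morphism $\widetilde\mu:G/L\to A$ with $\mu=\widetilde\mu\circ\pi$, and $\widetilde\mu$ is again normalised. Finally $\pi$ is a faithfully flat homomorphism of algebraic groups, so $\pi^*:C^\bullet_{Alg}(G/L;A)\to C^\bullet_{Alg}(G;A)$ is injective and commutes with $d$; from $\pi^*\eta=d\mu=d(\pi^*\widetilde\mu)=\pi^*(d\widetilde\mu)$ we get $\eta=d\widetilde\mu$, hence $[\eta]=0\in H^2_{Alg}(G/L;A)$.

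The rational case is identical except at the descent step. Here $\mu:G\to A$ is a rational map, so writing $\bK[A]=\bK[x_1,\ldots,x_m]/I$ its components are right-$L$-invariant elements of the function field $\bK(G)$; such elements make up $\bK(G)^L=\bK(G/L)$ (equivalently, one restricts $\mu$ to an $L$-stable dense open subset of $G$ and applies the universal property there, which is precisely Rosenlicht's rational quotient construction). This produces $\widetilde\mu\in C^1_{Rat}(G/L;A)$ with $\pi^*\widetilde\mu=\mu$, and the same injectivity-and-$d$-commutation argument for $\pi^*$ on the rational cochain complexes finishes the proof.

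The step I expect to be the main obstacle is this descent when $L$ is not reduced -- the case of greatest interest here, since $L$ will typically be a Frobenius kernel. One must read ``$\mu$ is right-$L$-invariant'' and ``$\bK(G)^L$'' scheme-theoretically, with respect to the whole subgroup scheme (tested on $G\times L$, not merely on $\bK$-points), and use that $\pi:G\to G/L$ remains faithfully flat so that the inclusion $\bK(G/L)\hookrightarrow\bK(G)$ genuinely identifies $\bK(G/L)$ with the ring of such invariants. Once these scheme-theoretic formulations are pinned down, everything else is a routine transcription of Proposition~\ref{H2inj}.
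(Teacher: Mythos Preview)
Your proposal is correct and is exactly the approach the paper takes: the paper's proof consists of the single sentence ``The proof follows as in Proposition~\ref{H2inj},'' relying implicitly on the universal property of the quotient already invoked for Proposition~\ref{H1Map2}. You have simply spelled out the descent step that the paper leaves tacit, including the scheme-theoretic reading needed when $L$ is a Frobenius kernel; this is precisely the content hidden behind the paper's reference to that universal property.
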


\begin{proof}
	The proof follows as in Proposition \ref{H2inj}.
\end{proof}

In the case of abstract groups, Section~\ref{s2.5} shows that by making careful choices of $(L,H)$-morphs in Theorem~\ref{branching} the image of the obstruction maps $\Obs:
\LH(G,N)_{\rho_i}
\longrightarrow 
H^2(G,L;(A_i,\rho_i))$
always lies inside $H^2(G/L;(A_i,\rho_i))\hookrightarrow H^2(G,L;(A_i,\rho_i))$. As such, it is possible to reinterpret Theorem~\ref{branching} using the complex $(C^\bullet(G/L;A),d)$ instead of $(C^\bullet(G,L;A),d)$ at all points. This conclusion for abstract groups, however,
relies on the observation that it is always possible to assume that the $(L,H)$-morphs being considered are normalised. When translating the results to the case of algebraic groups it is far from clear that the analogues of Lemma~\ref{Norm} and Corollary~\ref{H2Map} hold.

{\bf Question:} Can the $(L,H)$-morphs considered in Sections~\ref{s3.3} and \ref{s3.4} be chosen to be normalised?

\end{document}